\documentclass[oneside]{amsart}

\usepackage[letterpaper, total={6.5in, 8.5in}]{geometry}
\usepackage{amsmath}
\usepackage{amsfonts}
\usepackage{amssymb}
\usepackage[dvipsnames]{xcolor}
\usepackage{mathrsfs}
\usepackage{enumerate}
\usepackage{amsthm}
\usepackage{tikz}
\usepackage{pgfplots}
\usepackage{float}
\usepackage[pdfdisplaydoctitle=true,
            colorlinks=true,
            urlcolor=blue,
            citecolor=blue,
            linkcolor=blue,
            pdfstartview=FitH,
            pdfpagemode= UseNone,
            bookmarksnumbered=true]{hyperref}
\usetikzlibrary{decorations.pathreplacing}
\usepgfplotslibrary{fillbetween}
\pgfplotsset{compat=1.18}
\definecolor{darkgreen}{rgb}{0,0.5,0}
\definecolor{darkred}{rgb}{0.7, 0, 0}

\makeatletter
\def\@tocline#1#2#3#4#5#6#7{\relax
  \ifnum #1>\c@tocdepth 
  \else
    \par \addpenalty\@secpenalty\addvspace{#2}%
    \begingroup \hyphenpenalty\@M
    \@ifempty{#4}{%
      \@tempdima\csname r@tocindent\number#1\endcsname\relax
    }{%
      \@tempdima#4\relax
    }%
    \parindent\z@ \leftskip#3\relax \advance\leftskip\@tempdima\relax
    \rightskip\@pnumwidth plus4em \parfillskip-\@pnumwidth
    #5\leavevmode\hskip-\@tempdima
      \ifcase #1
       \or\or \hskip 1em \or \hskip 2em \else \hskip 3em \fi%
      #6\nobreak\relax
    \hfill\hbox to\@pnumwidth{\@tocpagenum{#7}}\par
    \nobreak
    \endgroup
  \fi}
\makeatother

\newtheorem{theorem}{Theorem}[section]
\newtheorem{lemma}[theorem]{Lemma}
\newtheorem{proposition}[theorem]{Proposition}
\newtheorem{corollary}[theorem]{Corollary}

\theoremstyle{definition}

\newtheorem{example}[theorem]{Example}

\newtheorem*{theorem*}{Theorem}
\theoremstyle{remark}
\newtheorem{remark}[theorem]{Remark}

\numberwithin{equation}{section}
\newcommand{\R}{\mathbb{R}}
\newcommand{\T}{\mathbb{T}}
\newcommand{\N}{\mathbb{N}}
\newcommand{\Z}{\mathbb{Z}}

\newcommand{\Dmu}{\mathscr{D}_{\mu}}

\newcommand{\id}{\mathrm{id}}

\newcommand{\symp}{\nabla^\perp}

\renewcommand{\div}{\mathrm{div}}
\DeclareMathOperator{\Ad}{Ad}
\DeclareMathOperator{\ad}{ad}
\DeclareMathOperator{\adstar}{\ad^\star\!\!}
\DeclareMathOperator{\Adstar}{\Ad^{\star}\!\!}

\newcommand{\dist}{\mathrm{dist}}
\newcommand{\U}{\mathcal{U}}

\newcommand{\abs}[1]{\lvert#1\rvert}
\newcommand{\norm}[1]{\lVert#1\rVert}
\newcommand{\ip}[1]{\langle #1 \rangle}

\newcommand{\Bignorm}[1]{\Big\lVert#1\Big\rVert}

\newcommand{\bigip}[1]{\big\langle #1 \big\rangle}

\newcommand{\Id}{\mathrm{Id}}                         
\newcommand{\admiss}{\sigma}                          
\newcommand{\reducedadmiss}{\alpha}                
\newcommand{\translationpath}{\xi_{\ell}}         
\newcommand{\geod}{\gamma}                            
\newcommand{\riemexp}{\exp_e}                         
\newcommand{\normalball}[2][s]{B^{#1}(0,#2)}          
\newcommand{\normalhood}[3][s]{\U^{#1}(#2,#3)}       
\newcommand{\leray}{P}                               
\newcommand{\gradproj}{Q}                            
\newcommand{\length}[1]{\mathcal{L}(#1)}   
\newcommand{\action}[1]{\mathcal{A}(#1)}   
\newcommand{\geodistance}{\dist_{L^2}^{\Dmu^s}}      
\newcommand{\sqdistance}{\Phi}                        
\newcommand{\lift}[1]{\tilde{#1}}                 
\newcommand{\Ltwodiv}{L^2_{\mathrm{div}}}             
\newcommand{\opnorm}[2]{\norm{#1}_{\mathrm{op}(#2)}}  
\newcommand{\closedball}[2][s]{\overline{B^{#1}}(0,#2)}          
\newcommand{\closedhood}[3][s]{\overline{\U^{#1}}(#2,#3)}        

\author{Theodore D. Drivas}
\author{Patrick Heslin}
\author{Gerard Misio{\l}ek}

\address{T.D. Drivas: Department of Mathematics, Stony Brook University, Stony Brook, NY, 11790}
\email{tdrivas@math.stonybrook.edu}

\address{P. Heslin: Max Planck Institute for Mathematics in the Sciences, 04103, Leipzig, Germany}
\email{patrick.heslin@mis.mpg.de}

\address{G. Misio{\l}ek: University of Notre Dame, Notre Dame, Indiana, 46556}
\email{gmisiole@nd.edu}

\begin{document}

\title{Distance Geometry and a Hydrodynamical Hopf-Rinow Criterion}

\subjclass[2020]{Primary 58D05; Secondary 35Q31.}
\keywords{Euler equations, volume-preserving diffeomorphisms, minimizing geodesics, Hopf-Rinow theorem, metric betweenness}

\begin{abstract}
    The hydrodynamical Hopf-Rinow problem asks whether a given reconfiguration of an ideal fluid can be attained by an Euler flow. On the flat two-dimensional torus, we give an equivalent criterion for the strong version of this problem (which requires that the fluid motion minimizes the total kinetic energy required) in terms of metric betweenness. Toward this, we prove that sufficiently small normal neighborhoods in the volume-preserving diffeomorphism group of the flat $n$-dimensional torus, $n \geq 2$, equipped with the kinetic energy metric, are geodesically convex; resolving the strong problem locally in the $H^s$ topology. We further establish local bi-Lipschitz equivalence of the intrinsic geodesic distance and the extrinsic $L^2$ distance. Finally, we prove that the squared geodesic distance from the identity admits a $C^{1,1}$ extension from a sufficiently small normal neighborhood to the ambient $L^2$ space.
\end{abstract}

\maketitle

\tableofcontents

\section{Introduction}
Arnold's \cite{arnold1966sur} geometric formulation of the incompressible Euler equations interprets the motion of an ideal fluid as a geodesic flow on the group of volume-preserving diffeomorphisms of the domain equipped with the natural kinetic energy metric. Ebin and Marsden \cite{ebin1970groups} developed the analytic framework for the above interpretation in the Sobolev $H^s$ setting. They showed that the geodesic equation is a smooth ordinary differential equation in the $H^s$ topology and, consequently, the $L^2$ metric admits a smooth Riemannian exponential map which is a local diffeomorphism. Hence, given a sufficiently regular initial velocity, the associated Cauchy problem has a unique solution, at least for short time.

\enlargethispage{2\baselineskip}

The two-point boundary-value problem asks the following question. Given two attainable fluid configurations, does there exist an Euler flow connecting them? Here attainable means that the configurations can be joined by an admissible (piecewise regular) curve of finite $L^2$ length. We distinguish two versions of this problem. The first version asks only for an Euler flow with the prescribed endpoints, or equivalently whether the $L^2$-Riemannian exponential map covers the connected component of the identity. The second version asks for an $L^2$ length-minimizing geodesic. We refer to these as the weak and strong hydrodynamical Hopf-Rinow problems, respectively. Recall that in finite dimensions the Hopf-Rinow theorem gives both conclusions under the equivalent assumptions of metric and geodesic completeness. These conclusions need not persist in infinite dimensions, where the local compactness used in the classical argument is no longer available \cite{atkin1975the,grossman1965hilbert, mcalpin1965infinite}. However, the local constructions in \cite{ebin1970groups} establish an affirmative answer to the weak version of the problem for fluid configurations which are sufficiently close to the identity in the $H^s$ topology. 

An attainable fluid configuration of the three-dimensional cube which cannot be reached by an $L^2$ length-minimizing geodesic was constructed by Shnirelman in \cite{shnirelman1985the}, presenting a negative answer to the strong problem in this setting. Brenier \cite{brenier1989the} subsequently recovered existence of minimizing paths after enlarging the admissible class to generalized incompressible flows and Shnirelman \cite{shnirelman1994generalized} developed an approximation technique using classical flows in dimensions at least three. These results give a relaxed variational theory but do not in general produce minimizing geodesics in the Sobolev framework. On the flat two-dimensional torus both the weak and strong global problems remain open.

There has nevertheless been substantial progress on the structure and properties of the $L^2$ exponential map. For fluid motion on a closed surface, it was shown to be nonlinear Fredholm of index zero \cite{ebin2006singularities}, with further developments made in \cite{li2022on, shnirelman2005microglobal}. Subsequent work established that for two-dimensional fluids it is a covering space map onto an open connected component of the identity with infinite diameter \cite{brandenbursky2017the, misiolek2010fredholm}. The regularity of the two-point boundary-value problem was studied in \cite{heslin2023two}, where Euler flows on the flat two-torus were shown to inherit the regularity of their endpoints. These results yield information about the global properties of the $L^2$ exponential map without proving its surjectivity. Accounts of the surrounding literature can be found in the monograph of Arnold and Khesin \cite{arnold2021topological} as well as the recent surveys \cite{drivas2023singularity,elgindi2025dynamics,khesin2023open}.

An immediate question concerning local geometric properties arises from these global considerations. The inverse function theorem gives $H^s$ normal coordinates and the Gauss lemma shows that a radial\footnote{Here radial refers to emanating from the center of the normal neighborhood.} geodesic minimizes among competitors which remain in the normal neighborhood. However, an $H^s$ normal neighborhood does not contain any ball for the distance induced by the $L^2$ metric. Consequently, the classical Riemannian argument does not compare the radial geodesic with competitors which leave the normal neighborhood. This issue was already left open in the discussion of the local variational problem in Ebin and Marsden; cf.\ \cite[Section~15.2(viii)]{ebin1970groups}.

Our first main result addresses this issue for the flat torus. In Theorem~\ref{local geodesic convexity} we prove that for sufficiently small normal neighborhoods the radial geodesics are the unique $L^2$ length minimizers among all admissible curves connecting the identity to their endpoint. We emphasize that the competitor curves are not required to remain in the normal neighborhood. The proof adapts Brenier's \cite{brenier1989the} sufficient condition for an Euler flow to minimize the action, see also the exposition in \cite{drivas2023singularity}. Since the fluid domain is the torus, an additional argument is needed to remove the potential integer winding of lifted competitors. We further strengthen this result in Theorem~\ref{geodesic convexity of normal neighborhoods}, where we show that sufficiently small normal neighborhoods in the volume-preserving diffeomorphism group are in fact geodesically convex: every pair of points is joined by an Euler flow contained entirely in the same neighborhood and this geodesic is, up to reparameterization, the unique length-minimizer among all admissible curves with those endpoints. This establishes a fundamental local Riemannian-geometric tool for the kinetic energy metric.

We next investigate the relationship between the intrinsic geodesic distance in the diffeomorphism group and the extrinsic distance in the ambient $L^2$ space between particle configurations. The constructions in \cite{shnirelman1994generalized} give a global comparison result for fluid configurations of cubes in dimensions at least three, with recent improvements in \cite{schiffer2024on}. By a different argument we prove in Theorem \ref{local Shnirelman inequality} an analogous local comparison result for the flat $n$-dimensional torus $\T^n$ which holds in every dimension $n\geq2$. This is extended in Corollary~\ref{Phi Lipschitz} to local bi-Lipschitz equivalence of the two distance metrics. This comparison also reveals a compactness property which is important in the global argument: although closed $H^s$ balls are not strongly compact, Proposition~\ref{compactness of closed normal neighborhood} shows that their image under the $L^2$-exponential map, for sufficiently small radii, is compact with respect to the geodesic distance. We then address the natural question of whether the local equivalence of metrics is a consequence of a bound on the second fundamental form, as it would be for a finite-dimensional submanifold of Euclidean space. In the hydrodynamical setting the pressure gradient is the formal analogue of the normal acceleration. Proposition~\ref{local ratio unbounded} shows, however, that the corresponding quadratic map is not locally bounded in $L^2$. Hence the comparison of these distances is not explained by a naive finite-dimensional curvature argument.

\clearpage

We then return to the global two-point boundary value problem. Our principal result is Theorem~\ref{Hopf-Rinow criteria} which gives a criterion in the two-dimensional setting for the strong hydrodynamical Hopf-Rinow problem in terms of Menger's \cite{menger1928untersuchungen} notion of metric betweenness. In particular we prove that, for a sufficiently small closed normal neighborhood, it is equivalent to the statement that every attainable configuration outside of the closed normal neighborhood has a point, distinct from the identity, inside that neighborhood which is metrically between the attainable configuration and the identity. Targets inside the closed normal neighborhood are already covered by Theorem~\ref{local geodesic convexity}. This can be viewed as a global-to-(quasi)-local reduction: the betweenness condition still depends on the global distance to the target but the required intermediate points lie in one fixed neighborhood, compact with respect to the geodesic distance.

This formulation also highlights the obstruction to extending the classical finite-dimensional proof. There one takes first-exit points of almost-minimizing curves on a sufficiently small metric sphere. Compactness produces a limiting boundary point and continuity of the geodesic distance ensures that this point is metrically between the center and the target. In our weak setting, the boundary of an $H^s$ normal neighborhood is not a sphere for the $L^2$ geodesic distance. The weak metric behaves as a distorted lens: a boundary point may be closer to the target than other boundary points without being metrically between the identity and the target. Hence minimizing distance to the target and satisfying metric betweenness need not agree. Once the correct metric notion has been identified, one must still overcome the lack of compactness of the boundary sphere. We conclude the section with an example of an infinite dimensional ellipsoid which demonstrates even harsher pathological behavior.

Finally, we prove in Theorem~\ref{Whitney extension of squared distance} that the squared geodesic distance admits a $C^{1, 1}$ extension to the ambient $L^2$ space via a version of the Whitney extension theorem. However, we show in Proposition \ref{no C4 extension with prescribed differential} that there exists no $C^4$ extension which preserves the values and the ambient differential. The section contains several quantitative estimates for the $L^2$ exponential map in the weak topology. As a further consequence, we prove that the intrinsic and ambient distances studied in Section \ref{local metric geometry} agree to first order at every point of a sufficiently small normal neighborhood. Lastly, we derive a third variational formula for the $L^2$ energy functional which provides an alternative avenue of proof for the mixed Sobolev estimates.

The paper is organized as follows. In Section~\ref{background} we recall the necessary analytic and geometric background. In Section~\ref{minimizing geodesics} we prove geodesic convexity of sufficiently small normal neighborhoods. Section~\ref{local metric geometry} contains the proof of the local equivalence of the geodesic and ambient $L^2$ distances and the compactness of closed normal neighborhoods as well as an explanation of the obstruction to a naive second-fundamental-form argument. In Section~\ref{Hopf-Rinow} we formulate the aforementioned metric betweenness criterion for the strong hydrodynamical Hopf-Rinow problem on $\T^2$ and discuss the corresponding infinite-dimensional obstructions. Section~\ref{regularity of squared geodesic distance} contains the proof of the ambient $C^{1,1}$ Whitney extension theorem for the squared geodesic distance and the discussion of the second derivative of the $L^2$ exponential map.
\vfill
\subsection*{Acknowledgements} GM thanks the Institute for Mathematical Sciences at Stony Brook University for their hospitality and support while part of this work was carried out. PH similarly thanks the Department of Mathematics at Stony Brook University for their hospitality.  TDD was partially supported by an NSF CAREER award \#2235395, a Stony Brook University Trustee’s award, an Alfred P. Sloan Fellowship. The authors also express their gratitude to Giacomo Del Nin for helpful conversations and suggestions regarding Section \ref{regularity of squared geodesic distance}.

\newpage

\section{Technical background}\label{background}

Throughout the paper we work on the flat $n$-torus $M = \T^n \simeq \R^n / \Z^n$ with $n \geq 2$. We write $dx$ for the volume form $\mu$ on $\T^n$ and $\nabla$ for the corresponding Riemannian connection. In what follows we will assume that $\frac{n}{2} + 1 < s' < s$. The letter $C$ will denote a positive constant whose dependencies will be indicated wherever relevant.

Let $H^s(\T^n,\T^n)$ denote the set of maps of Sobolev class $H^s$. The group of volume-preserving diffeomorphisms of Sobolev class
\begin{equation*}
\Dmu^s(\T^n) =\bigl\{\eta\in H^s(\T^n,\T^n): \eta^{-1}\in H^s(\T^n,\T^n),\ \eta^*dx=dx\bigr\}
\end{equation*}
is a smooth Hilbert manifold and a topological group under composition. Its tangent space at the identity diffeomorphism is
\begin{equation*}
T_e\Dmu^s(\T^n) =\bigl\{u\in H^s(\T^n,\R^n):\div \, u=0\bigr\}.
\end{equation*}
For an arbitrary volume-preserving diffeomorphism $\eta$ we define
\begin{equation*}
T_\eta\Dmu^s(\T^n) = \{u\circ\eta:u\in T_e\Dmu^s(\T^n)\}.
\end{equation*}
When inverse Laplacians are used, we impose the usual zero-mean condition. We use the sign convention for which $-\Delta$ is nonnegative.

Following Arnold's geometric approach to the study of ideal fluid motion we equip $T_e\Dmu^s(\T^n)$ with an $L^2$ inner product
\begin{equation}\label{L^2 metric}
\ip{u,v}_{L^2} = \int_{\T^n}u(x)\cdot v(x)\,dx, \qquad u,v\in T_e\Dmu^s(\T^n)
\end{equation}
which we extend to a weak\footnote{This metric is weak in the sense that the topology it induces is coarser than the underlying $H^s$ Hilbert manifold topology.} Riemannian metric on $\Dmu^s(\T^n)$ by right-translation.

The geodesic equations of this metric correspond to the Euler equations of ideal hydrodynamics. In the Lagrangian representation the Cauchy problem reads
\begin{equation}\label{euler in lagrangian}
\begin{split}
&\ddot\geod(t)=-\nabla p(t)\circ\geod(t),\\
&\dot\geod(0)=u_0,\\
&\geod(0)=e.
\end{split}
\end{equation}
This second order system, under the flow equation
\begin{equation*}
    \frac{d\gamma}{dt}(t,x) = u(t, \gamma(t,x)), \quad \gamma(0, x) = x
\end{equation*}
reduces to the familiar Eulerian representation
\begin{equation}\label{euler}
\begin{split}
&\partial_t u+\nabla_u u=-\nabla p,\\
&\div \, u=0,\\
&u(0)=u_0,
\end{split}
\end{equation}
where $p$ is the pressure function with zero mean
\begin{equation*}
\int_{\T^n}p(t,x)\,dx=0.
\end{equation*}

Ebin and Marsden \cite{ebin1970groups} showed that \eqref{euler in lagrangian} is a smooth second-order ordinary differential equation on $\Dmu^s(\T^n)$. Consequently, the $L^2$ metric \eqref{L^2 metric} admits a smooth Riemannian exponential map $\exp_e$ defined on an open $H^s$ neighborhood of the origin. Moreover we have
\begin{equation*}
\exp_e(0)=e \quad \text{and} \quad d\exp_e(0)=\mathrm{Id}
\end{equation*}
and therefore, by the inverse function theorem, there exists $\varepsilon>0$ such that
\begin{equation}\label{normal neighborhood}
\exp_e:B^s(0,\varepsilon) \rightarrow \U^s(e,\varepsilon)\subset\Dmu^s(\T^n)
\end{equation}
is a diffeomorphism where 
\begin{equation}\label{normal ball}
    B^s(0,\varepsilon) = \big\{v \in T_e\Dmu^s(\T^n) \ : \ \norm{v}_{H^s} < \varepsilon\big\}
\end{equation}
and
\begin{equation}\label{normal hood}
    \normalhood{e}{\varepsilon} = \riemexp \normalball{\varepsilon}
\end{equation}
is an $H^s$-open set containing the identity. We will refer to such $\normalhood{e}{\varepsilon}$ as a normal neighborhood. Whenever $0<\varepsilon<r$ and $\riemexp$ is a diffeomorphism on $\normalball{r}$, we further define
\begin{equation*}
    \closedball{\varepsilon} = \bigl\{u \in T_e\Dmu^s(\T^n):\norm{u}_{H^s}\leq\varepsilon\bigr\}, \qquad \closedhood{e}{\varepsilon}=\riemexp \closedball{\varepsilon}.
\end{equation*}
We refer to $\closedhood{e}{\varepsilon}$ as a closed normal neighborhood. Throughout, the notation $\exp_e^{-1}$ represents the inverse of the exponential map restricted to the ball $B^s(0, \varepsilon)$. Namely,
\begin{equation*}
\exp_e^{-1} = \big(\exp_e\big|_{B^s(0,\varepsilon)}\big)^{-1}.
\end{equation*}
When closed normal neighborhoods are under consideration, $\riemexp^{-1}$ will denote the inverse restricted to an open normal neighborhood of some larger radius.

The Riemannian exponential map corresponds to the Lagrangian data-to-solution map of the fluid equations. More concretely, for an initial velocity $u_0\in T_e\Dmu^s(\T^n)$ the $L^2$-geodesic 
\begin{equation*}
\geod(t) = \exp_e tu_0
\end{equation*}
satisfies $\dot\geod(t)=u(t)\circ\geod(t)$ and $\geod(0) = e$ with $u(t)$ solving the Cauchy problem \eqref{euler}.

Alongside the Riemannian structure we will also use a Lie-theoretic framework. Recall that, given $\eta\in\Dmu^s(\T^n)$, the Lie group adjoint is
\begin{equation*}
\Ad_\eta v = \eta_*v = ( D\eta \, v )\circ\eta^{-1}
\end{equation*}
and the associated Lie algebra adjoint is
\begin{equation}\label{ad convention}
\ad_u v = -[u,v] = \nabla_v u - \nabla_u v
\end{equation}
The $L^2$ coadjoint operators are given by
\begin{equation*}
\ip{\Adstar_\eta v,w}_{L^2} = \ip{v,\Ad_\eta w}_{L^2} \quad \text{and} \quad \ip{\adstar_u v,w}_{L^2} = \ip{v,\ad_u w}_{L^2}.
\end{equation*}

With this notation the Euler equation in \eqref{euler} can be written in the Euler-Arnold form as
\begin{equation}\label{Euler Arnold}
\begin{split}
\partial_t u &= -\adstar_u u, \\
u(0) &= u_0.
\end{split}
\end{equation}
Along any geodesic $\geod(t)$ there holds a coadjoint conservation law of the form
\begin{equation}\label{coadjoint conservation}
\Adstar_{\geod(t)}u(t)=u_0,
\end{equation}
or equivalently,
\begin{equation*}
u(t)=\Adstar_{\geod(t)^{-1}}u_0.
\end{equation*}
On the flat torus, this last identity may be recognized as the Weber formula
\begin{equation}\label{Weber formula prelim}
u(t) = P_e \Bigl( \big(D\geod(t)^{-T}u_0\big)\circ\geod(t)^{-1} \Bigr),
\end{equation}
where
\begin{equation*}
P_e = \mathrm{Id}- Q_e \quad \text{and} \quad Q_e = \nabla\Delta^{-1}\div
\end{equation*}
are the Leray projections onto the divergence-free and gradient fields respectively.

In what follows we will need the following estimate for the Lie algebra coadjoint.

\begin{lemma}\label{algebra coadjoint estimate}
There exists a constant $C>0$ such that, for all $v, w \in T_e\Dmu^s(\T^n)$ we have
\begin{equation*}
\norm{\adstar_w v}_{L^2} \leq C\norm{v}_{H^{s}}\norm{w}_{L^2}.
\end{equation*}
\end{lemma}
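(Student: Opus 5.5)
We compute $\adstar_w v$ explicitly and bound it by duality. Take any $z \in T_e\Dmu^s(\T^n)$; by the convention \eqref{ad convention},
\begin{equation*}
\ip{\adstar_w v, z}_{L^2} = \ip{v, \ad_w z}_{L^2} = \int_{\T^n} v\cdot\big(\nabla_z w - \nabla_w z\big)\,dx .
\end{equation*}
In the second term we integrate by parts, using $\div\, w = 0$. This gives $-\int v\cdot \nabla_w z\,dx = \int (\nabla_w v)\cdot z\,dx$. In the first term we move the derivative off $w$: writing $\int v_i z_j \partial_j w_i\,dx$ and integrating by parts with $\div\, z=0$ yields $-\int w_i z_j \partial_j v_i\,dx$. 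Alternatively, one simply keeps it as $\int ((Dw)^T v)\cdot z\,dx$.

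The cleanest route is the standard identity on the flat torus
\begin{equation*}
\adstar_w v = P_e\big(\nabla_w v + (Dw)^T v\big),
\end{equation*}
which is what we obtain when both terms are rewritten as $\int(\cdot)\cdot z$. The difficulty is that $(Dw)^T v$ contains a derivative of $w$, and $w$ is only controlled in $L^2$. To handle it, note that $(Dw)^Tv = \nabla(w\cdot v) - (Dv)^T w$. The gradient term $\nabla(w\cdot v)$ is annihilated by the Leray projection $P_e$. Hence
\begin{equation*}
\adstar_w v = P_e\big(\nabla_w v - (Dv)^T w\big),
\end{equation*}
in which every derivative falls on $v$. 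One can verify this directly in the duality pairing: $\int \nabla(w\cdot v)\cdot z\,dx = 0$ because $\div\, z = 0$. The formula is first justified for smooth fields and then extended by density, with $\adstar_w v$ understood as the $L^2$-Riesz representative in $T_e\Dmu^0$.

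The estimate then follows from the fact that $P_e$ is an orthogonal projection on $L^2$, together with the Sobolev embedding $H^{s-1}\hookrightarrow L^\infty$, which holds since $s-1 > n/2$. This gives
\begin{equation*}
\norm{\adstar_w v}_{L^2} \le \norm{\nabla_w v}_{L^2} + \norm{(Dv)^T w}_{L^2} \le 2\norm{Dv}_{L^\infty}\norm{w}_{L^2} \le C\norm{v}_{H^s}\norm{w}_{L^2}.
\end{equation*}

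The only step requiring care is the integration by parts that transfers the derivative from $w$ onto $v$ and $z$. It is legitimate once both $w$ and $z$ are divergence-free and the torus has no boundary. The boundary-free setting is what makes the gradient term vanish against $z$.
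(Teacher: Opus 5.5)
Your proposal is correct and follows the paper's own proof. Both derive $\adstar_w v = P_e\bigl(\nabla_w v + Dw^T v\bigr)$, use $\nabla(w\cdot v) = Dw^T v + Dv^T w$ together with the fact that $P_e$ annihilates gradients to move every derivative onto $v$, and then bound the result by $\norm{Dv}_{L^\infty}\norm{w}_{L^2}\le C\norm{v}_{H^s}\norm{w}_{L^2}$. Your explicit duality check of the formula for $\adstar_w v$, which the paper only asserts, is a welcome addition.
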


\begin{proof}
For divergence-free fields on the flat torus we have
\begin{equation*}
\adstar_w v = P_e \bigl( \nabla_w v + D w^{T} v \bigr).
\end{equation*}
Since $\nabla (w \cdot v) = D w^T v + D v^T w$ and $P_e$ annihilates gradients we have
\begin{equation*}
\adstar_w v = P_e \bigl( \nabla_w v - D v^T w \bigr)
\end{equation*}
and hence
\begin{equation*}
\norm{\adstar_wv}_{L^2} \leq C \norm{\nabla v}_{L^\infty}\norm{w}_{L^2} \leq C \norm{v}_{H^{s}}\norm{w}_{L^2}.
\end{equation*}
\end{proof}

We will refer to a curve as admissible if it is piecewise $C^1$ and regular; however, we will also allow for constant curves when necessary. For such a curve $\sigma:[0,1]\to\Dmu^s(\T^n)$ we define its $L^2$ length
\begin{equation}\label{length}
\length\sigma = \int_0^1\norm{\dot\sigma(t)\circ\sigma(t)^{-1}}_{L^2}\,dt
\end{equation}
and its action
\begin{equation}\label{action}
\action\sigma = \frac{1}{2} \int_0^1\norm{\dot\sigma(t)\circ\sigma(t)^{-1}}_{L^2}^2\,dt.
\end{equation}
The induced $L^2$-geodesic distance between two volume-preserving diffeomorphisms in $\Dmu^s(\T^n)$ is then
\begin{equation}\label{distance}
\geodistance(\xi,\eta) = \inf_{\substack{\sigma\text{ admissible} \\ \sigma(0)=\xi,\ \sigma(1)=\eta }} \mathcal L(\sigma)
\end{equation}
and, since the $L^2$ metric is right-invariant we have
\begin{equation*}
\geodistance(\xi,\eta) = \geodistance(e,\eta\circ\xi^{-1}).
\end{equation*}

We say $\eta$ is attainable from the identity when $\geodistance(e,\eta)<\infty$. Lastly, we record that this distance is indeed non-degenerate.

\begin{lemma}\label{nondegeneracy of geodesic distance}
For all $\eta,\varphi\in\Dmu^s(\T^n)$ which can be joined by an admissible curve, one has
\begin{equation}\label{global map distance lower bound}
\biggl(\int_{\T^n}\dist^{\T^n}\big(\eta(x),\varphi(x)\big)^2\,dx\biggr)^{1/2}\leq\geodistance(\eta,\varphi),
\end{equation}
where $\dist^{\T^n}$ is the flat distance on the torus. In particular, $\geodistance$ is nondegenerate on every path component of $\Dmu^s(\T^n)$.
\end{lemma}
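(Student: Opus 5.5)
Fix an admissible curve $\sigma:[0,1]\to\Dmu^s(\T^n)$ with $\sigma(0)=\eta$ and $\sigma(1)=\varphi$, and write $u(t)=\dot\sigma(t)\circ\sigma(t)^{-1}$ for its Eulerian velocity. I will prove that the left-hand side of \eqref{global map distance lower bound} is at most $\length\sigma$; taking the infimum over $\sigma$ then gives the inequality. The idea is that the $L^2$ length of a curve of maps bounds the $L^2$ distance between its endpoints in the space of $\T^n$-valued maps, and this in turn dominates the pointwise torus distance.

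First I treat each particle separately. For fixed $x\in\T^n$, the map $t\mapsto\sigma(t)(x)$ is a piecewise $C^1$ curve in $\T^n$, since $H^s\subset C^1$ for $s>\frac n2+1$ and evaluation is continuous. Its Riemannian length in the flat torus is at least the flat distance between its endpoints, so
\begin{equation*}
\dist^{\T^n}\big(\eta(x),\varphi(x)\big)\le\int_0^1\abs{\dot\sigma(t)(x)}\,dt .
\end{equation*}
Next I take the $L^2(dx)$ norm of both sides and apply Minkowski's integral inequality to move the norm inside the time integral:
\begin{equation*}
\biggl(\int_{\T^n}\dist^{\T^n}\big(\eta(x),\varphi(x)\big)^2dx\biggr)^{1/2}\le\int_0^1\norm{\dot\sigma(t)}_{L^2}\,dt .
\end{equation*}
Finally, because $\sigma(t)$ preserves volume, a change of variables gives $\norm{\dot\sigma(t)}_{L^2}=\norm{\dot\sigma(t)\circ\sigma(t)^{-1}}_{L^2}$. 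The right-hand side is therefore exactly $\length\sigma$ as defined in \eqref{length}.

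The only technical points are measurability and regularity. The integrand $(t,x)\mapsto\abs{\dot\sigma(t)(x)}$ is jointly measurable and piecewise continuous, since $\sigma$ is piecewise $C^1$ into $H^s\subset C^0$. On each $C^1$ piece one applies the argument above and then sums over the pieces, using the triangle inequality for $\dist^{\T^n}$. Minkowski's integral inequality requires nothing beyond this measurability.

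For nondegeneracy, suppose $\geodistance(\eta,\varphi)=0$. Then \eqref{global map distance lower bound} forces $\eta(x)=\varphi(x)$ for almost every $x$. Since both maps are continuous, $\eta=\varphi$. Symmetry of $\geodistance$ follows by reversing curves, and the triangle inequality by concatenating them, so $\geodistance$ is a genuine metric on each path component of $\Dmu^s(\T^n)$. None of these steps is a serious obstacle; the only point needing care is the identification of the Lagrangian and Eulerian $L^2$ norms, which uses $\sigma(t)^*dx=dx$.
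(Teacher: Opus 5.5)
Your proposal is correct and follows essentially the same argument as the paper: the pointwise bound by the length of the particle path $t\mapsto\sigma(t)(x)$, then Minkowski's inequality together with volume preservation to identify the right-hand side with $\length{\sigma}$, and continuity to upgrade almost-everywhere equality to equality. Your extra remarks on measurability, piecewise $C^1$ pieces, symmetry and the triangle inequality are harmless additions that the paper leaves implicit.
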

\begin{proof}
Let $\admiss:[0,1]\to\Dmu^s(\T^n)$ be an admissible curve from $\eta$ to $\varphi$. For every $x\in\T^n$ we have
\begin{equation*}
\dist^{\T^n}\big(\eta(x),\varphi(x)\big)\leq\int_0^1\abs{\partial_t\admiss(t,x)}\,dt.
\end{equation*}
Minkowski's inequality and volume preservation give
\begin{equation*}
\biggl(\int_{\T^n}\dist^{\T^n}\big(\eta(x),\varphi(x)\big)^2\,dx\biggr)^{1/2}\leq\int_0^1\norm{\partial_t\admiss(t)}_{L^2}\,dt=\length{\admiss}.
\end{equation*}
Taking the infimum over all admissible curves proves \eqref{global map distance lower bound}. If the left hand side vanishes, then $\eta=\varphi$ almost everywhere and hence everywhere by continuity.
\end{proof}

\section{Geodesic convexity of normal neighborhoods}\label{minimizing geodesics}
In this section we prove that, for sufficiently small $H^s$ neighborhoods of the identity, the radial geodesics are the unique (up to reparameterization) length-minimizers among all admissible curves. More precisely we establish the following.
\begin{theorem}[Minimizing radial geodesics]\label{local geodesic convexity}
Let $n\geq2$ and $s>\frac n2+1$. There exists $\varepsilon>0$ such that for all $\eta \in \normalhood{e}{\varepsilon}$ the radial geodesic $t \mapsto \geod(t) = \exp_e(tu_0)$ with $\eta = \exp_e u_0$ and $u_0 \in \normalball{\varepsilon}$ is, up to reparameterization, the unique $L^2$ length-minimizer among all admissible curves in $\Dmu^s(\T^n)$ from $e$ to $\eta$. Consequently, for all $\eta \in \normalhood{e}{\varepsilon}$ we have
\begin{equation*}
\geodistance(e, \eta) = \norm{\exp_e^{-1} \eta}_{L^2} = \norm{u_0}_{L^2}.
\end{equation*}
\end{theorem}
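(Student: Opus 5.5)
We adapt Brenier's sufficient condition for minimality of an Euler flow. Since length minimizers can be reparameterized to constant speed, and length and action are related by Cauchy--Schwarz, $\length{\admiss}^2 \le 2\action{\admiss}$ with equality exactly for constant speed curves, it suffices to show the following: for $\varepsilon$ small, the geodesic $\geod(t)=\exp_e(tu_0)$ on $[0,1]$ satisfies $\action{\geod}\le\action{\admiss}$ for every admissible $\admiss$ from $e$ to $\eta$, with equality only if $\admiss=\geod$. Since $\geod$ has constant speed, $\action{\geod}=\frac12\norm{u_0}_{L^2}^2$, and the distance formula follows.

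The key tool is Brenier's pressure convexity criterion. Lift everything to $\R^n$: write $\lift\admiss(t,x)$ for the $\Z^n$-periodic-displacement lift with $\lift\admiss(0,x)=x$. Let $p(t,x)$ be the pressure of the Euler solution $u(t)$ generating $\geod$. By Sobolev embedding and the smooth dependence of the Euler solution on $u_0\in H^s$, we have $\sup_{t\in[0,1]}\norm{D^2p(t)}_{L^\infty}\le C\norm{p}_{H^{s}}\le C\varepsilon^2$ (use $-\Delta p=\mathrm{tr}(Du)^2$). Hence the Lagrangian $L(t,x,v)=\frac12|v|^2-p(t,x)$ on $\R^n$ has no conjugate points on $[0,1]$ once $C\varepsilon^2<\pi^2$. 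More concretely, for any curve $z:[0,1]\to\R^n$ with $z(0)=y(0)$ and $z(1)=y(1)$, where $y(t)=\lift\geod(t,x)$ solves $\ddot y=-\nabla p(t,y)$, we get
\begin{equation*}
\int_0^1 \tfrac12|\dot z|^2-p(t,z)\,dt \;\ge\; \int_0^1\tfrac12|\dot y|^2-p(t,y)\,dt+c\int_0^1|\dot z-\dot y|^2\,dt .
\end{equation*}
This follows by expanding around $y$, integrating by parts using the equation for $y$, and applying Poincaré's inequality $\int|\dot w|^2\ge\pi^2\int|w|^2$ for $w=z-y$ vanishing at the endpoints, together with the bound on $D^2p$.

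Apply this pointwise in $x$ with $z(t)=\lift\admiss(t,x)$ and integrate over $\T^n$. Volume preservation gives $\int_{\T^n}p(t,\admiss(t,x))\,dx=\int p(t,\cdot)=\int p(t,\geod(t,x))\,dx$, so the pressure terms cancel, leaving $\action\admiss\ge\action\geod+c\int\!\!\int|\partial_t\admiss-\partial_t\geod|^2$.

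The main obstacle is the endpoint condition. It requires $\lift\admiss(1,x)=\lift\geod(1,x)$, whereas a priori the lifts differ by an integer vector $k\in\Z^n$. By continuity in $x$ and connectedness, $k$ is constant. To handle a nonzero $k$, I will use a crude bound instead of the comparison inequality. If $\lift\admiss(1,x)=\lift\geod(1,x)+k$ with $k\ne0$, then by Cauchy--Schwarz and Fubini,
\begin{equation*}
2\action\admiss\ge\int_{\T^n}|\lift\admiss(1,x)-x|^2dx.
\end{equation*}
Moreover $|\lift\geod(1,x)-x|\le\norm{u_0}_{L^\infty}+C\varepsilon^2\le C\varepsilon$, so the right-hand side is at least $(|k|-C\varepsilon)^2\ge(1-C\varepsilon)^2$, which exceeds $\norm{u_0}_{L^2}^2\le\varepsilon^2$ for small $\varepsilon$. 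Thus only $k=0$ is competitive, and the Brenier argument applies. Uniqueness follows from the strict term $c\int\!\!\int|\partial_t\admiss-\partial_t\geod|^2$: equality forces $\admiss=\geod$ for constant-speed competitors, hence uniqueness up to reparameterization among length minimizers.
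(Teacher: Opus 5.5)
Your argument is correct. Its core is the same as the paper's. Your pointwise-in-$x$ inequality for the Lagrangian $\tfrac12\abs{v}^2-p(t,x)$, integrated over $\T^n$ with the pressure terms cancelled by volume preservation, is the computation the paper performs on the integrated action difference: integration by parts in time, Taylor expansion of $p$ along the segment from $\lift{\geod}$ to $\lift{\admiss}$, and the Poincar\'e inequality with constant $\pi^2$.

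The genuine difference is how you handle integer winding. The paper proves a separate reduction lemma. It splits the Eulerian velocity of $\admiss$ into a mean-free part $v(t)$ and a spatially constant part $h(t)$, and factors $\admiss(t)=\translationpath(t)\circ\reducedadmiss(t)$ with $\translationpath(t,x)=x+t\ell$. It then computes $2\bigl(\action{\admiss}-\action{\reducedadmiss}\bigr)=\abs{\ell}^2+2c\cdot\ell>0$ for $\ell\neq0$, where $c=\int_{\T^n}\zeta(1,x)\,dx$. So every winding competitor is strictly beaten by an unwound competitor with the same endpoint, and Brenier's argument is then applied to that one. You instead rule winding competitors out directly via $2\action{\admiss}\geq\int_{\T^n}\abs{\lift{\admiss}(1,x)-x}^2\,dx\geq(1-C\varepsilon)^2>\norm{u_0}_{L^2}^2$. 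This is shorter and needs neither the Hodge decomposition nor the translated flow.

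What the paper's route buys is generality. The proof of its lemma uses only that the mean displacement $c$ lies in $(-\tfrac12,\tfrac12)^n$ and makes no comparison with $\norm{u_0}_{L^2}$. Combined with Brenier's pressure-Hessian condition, it therefore gives a torus version of that criterion for geodesics that need not be small. Your bound relies on $\norm{u_0}_{L^2}$ and the pointwise displacement both being small relative to $1$. In the small-$\varepsilon$ regime of the theorem, both suffice.

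One minor slip: the inequality $\norm{D^2p}_{L^\infty}\leq C\norm{p}_{H^s}$ would require $s>\tfrac n2+2$. The correct chain, which your remark about $-\Delta p=\mathrm{tr}(Du)^2$ points to, is $\norm{D^2p}_{L^\infty}\leq C\norm{D^2p}_{H^{s-1}}\leq C\norm{u}_{H^s}^2$.
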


\begin{remark}
Note that, by right-invariance, this establishes the same result for radial
geodesics in sufficiently small neighborhoods based at any point in the group.
\end{remark}

\begin{remark}
For a weak Riemannian metric one must distinguish length minimality among curves within a normal neighborhood from length minimality among all admissible curves. The Gauss lemma remains valid for smooth variations in the strong $H^s$ topology and shows that a radial geodesic minimizes among curves which remain in the $H^s$-normal neighborhood. It does not, by itself, compare the radial geodesic with curves which leave that neighborhood. Indeed, an $H^s$ normal neighborhood need not contain any ball for the distance induced by the weak $L^2$ metric. This distinction has also been noted in the original paper of Ebin and Marsden, cf. \cite[Section~15.2 (viii)]{ebin1970groups}. More generally, weak Riemannian metrics may even have degenerate geodesic distance, cf. \cite{eliashberg1993bi,michor2005vanishing}.

The obstruction is illustrated below. A curve $\alpha:[0,1]\rightarrow\Dmu^s(\T^n)$ may exit the neighborhood $\normalhood{e}{\varepsilon}$ in \eqref{normal neighborhood} at a diffeomorphism $\varphi_1$ which is an $H^s$-distance comparable to $\varepsilon$ from the identity but is much closer to it in $L^2$. Similarly, the curve may reenter $\normalhood{e}{\varepsilon}$ at a diffeomorphism $\varphi_2$ which is very close to the endpoint $\eta$ in $L^2$. Hence the portions of $\alpha$ lying inside the normal neighborhood have no a priori $L^2$ length comparable to the size of that neighborhood.

Theorem \ref{local geodesic convexity} is an unrestricted minimality statement: the competitors in the theorem are not required to remain in the normal neighborhood.
\end{remark}

\begin{figure}[H]
\centering
\begin{tikzpicture}[scale=1.5]

\definecolor{lightgray}{gray}{0.9}

\draw[thick] (-4.0, 0.25) to[out=30, in=150] (3.0, 0.25) 
    node[below=1pt] {$\Dmu^s$};

\filldraw[fill=lightgray, draw=black, very thick, densely dashed] (-0.5, -0.4) ellipse (2.5cm and 0.8cm);

\node[black, above=2pt] at (-0.5, -0.2) {$\mathcal{U}^s(e, \varepsilon)$};

\draw[black, ultra thick] 
    (-1.5, -0.4) 
    .. controls (-1.8, -0.6) and (-1.8, -0.8) .. (-1.7, -1.102) 
    .. controls (-1.5, -1.5) and (-1.2, -1.5) .. (-0.6, -1.45)  
    .. controls (0.2, -1.4) and (0.3, -1.6) .. (1.5, -1.4)      
    .. controls (1.5, -1.4) and (2.5, -1.3) .. (1.8177, -0.71)    
    .. controls (1.8177, -0.71) and (1.544, -0.551) .. (1.3, -0.4);     

\node[black, below=4pt] at (0.5, -1.45) {$\alpha$};

\fill[black] (-1.5, -0.4) circle (2pt) node[below right=1pt] {$e$};
\fill[black] (1.3, -0.4) circle (2pt) node[above right=1pt] {$\eta$};

\fill[black] (-1.7, -1.102) circle (2pt) node[below left=0pt] {$\varphi_1$};
\fill[black] (1.816, -0.71034) circle (2pt) node[right=1pt] {\ $\varphi_2$};

\end{tikzpicture}
\caption{A competitor curve $\alpha$ may exit and reenter the normal neighborhood at some $\varphi_1$ and $\varphi_2$ which are close to $e$ and $\eta$ respectively in the $L^2$ geodesic distance.}\label{gauss lemma issue}
\end{figure}

\enlargethispage{3\baselineskip}

For the proof of Theorem \ref{local geodesic convexity} we adapt the following criterion of Brenier, cf. \cite{brenier1989the, drivas2023singularity}.

\begin{lemma}
Let $M \subset \R^n$ be a bounded convex domain, possibly with boundary. For $u_0 \in T_e\Dmu^s(M)$, let $\geod(t) = \exp_e(tu_0)$ and $u(t) = \dot{\geod}(t) \circ \geod^{-1}(t)$ be the corresponding Lagrangian and Eulerian solutions of the fluid equations. If
\begin{equation}\label{pressure hessian condition}
\sup_{t \in [0,1]} \sup_{x \in M} \sup_{\abs{y}=1} \nabla^2 p(t,x)(y,y) < \pi^2
\end{equation}
where $p$ is the pressure in \eqref{euler}, then $\geod(t)$ is, up to reparametrization, the unique $L^2$ length minimizer among all admissible curves connecting $e$ to $\geod(1)$ in $\Dmu^s(M)$.
\end{lemma}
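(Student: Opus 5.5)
We prove Brenier's criterion with a pointwise calibration argument: a Lagrangian functional of each particle trajectory is bounded below by a quantity that depends only on the endpoints, and the pressure enters only through a term that integrates to zero against the volume form. We work with the action $\action{\cdot}$ rather than the length. By Cauchy--Schwarz, $\length{\sigma}^2\le 2\action{\sigma}$, with equality exactly for constant-speed parametrizations, and $\length{\geod}^2 = 2\action{\geod}$ since geodesics have constant $L^2$ speed. It therefore suffices to show $\action{\sigma}\ge\action{\geod}$ for every admissible $\sigma$ with $\sigma(0)=e$ and $\sigma(1)=\geod(1)$, with equality only if $\sigma=\geod$. Reparametrizing a length competitor to constant speed then gives the length statement and its uniqueness.

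Fix $x\in M$ and let $z(t)=\sigma(t,x)$ and $\bar z(t)=\geod(t,x)$; these are curves in the convex set $M$ with the same endpoints. Put $w=z-\bar z$, so that $w(0)=w(1)=0$ and $z+\theta w\in M$ for $\theta\in[0,1]$ by convexity. Define
\[
L(z)=\int_0^1\Bigl(\tfrac12\abs{\dot z}^2-p(t,z)\Bigr)\,dt .
\]
Expanding $L(\bar z+w)-L(\bar z)$ to second order with integral remainder, the first-order term vanishes. Indeed, integrating by parts using $w(0)=w(1)=0$, it equals $-\int_0^1(\ddot{\bar z}+\nabla p(t,\bar z))\cdot w\,dt$, which is zero by \eqref{euler in lagrangian}. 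The remainder is
\[
\int_0^1\Bigl(\tfrac12\abs{\dot w}^2-\int_0^1(1-\theta)\,\nabla^2p(t,\bar z+\theta w)(w,w)\,d\theta\Bigr)dt .
\]
The pressure-Hessian bound \eqref{pressure hessian condition} gives $\nabla^2 p\le\lambda$ with $\lambda<\pi^2$, so the remainder is at least $\tfrac12\int_0^1(\abs{\dot w}^2-\lambda\abs{w}^2)\,dt$. The one-dimensional Wirtinger inequality $\int_0^1\abs{w}^2\le\pi^{-2}\int_0^1\abs{\dot w}^2$ for $w\in H^1_0$ then yields
\[
L(z)-L(\bar z)\ge\tfrac12\bigl(1-\lambda/\pi^2\bigr)\int_0^1\abs{\dot w}^2\,dt\ge0 .
\]

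Next integrate over $x\in M$. The kinetic part gives $\action{\sigma}-\action{\geod}$, because each $\sigma(t)$ is volume-preserving and $\int_M\abs{\partial_t\sigma}^2dx=\norm{\dot\sigma\circ\sigma^{-1}}_{L^2}^2$. The pressure parts satisfy $\int_M p(t,\sigma(t,x))\,dx=\int_M p(t,y)\,dy=\int_M p(t,\geod(t,x))\,dx$ for every $t$, again by volume preservation, so they cancel. Hence
\[
\action{\sigma}-\action{\geod}\ge c\int_M\int_0^1\abs{\partial_t(\sigma-\geod)}^2\,dt\,dx ,\qquad c=\tfrac12\bigl(1-\lambda/\pi^2\bigr)>0 .
\]
Equality forces $\partial_t(\sigma-\geod)\equiv0$, and together with the common initial point this gives $\sigma=\geod$.

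The delicate points are regularity and measurability. A piecewise $C^1$ curve in $\Dmu^s$ has piecewise $C^1$ particle paths in $t$ for each $x$, since $H^s\subset C^1$. The pressure $p$ is $C^2$ in $x$ and continuous in $t$, because $s>\frac n2+1$, so the Taylor expansion is legitimate. The strict inequality $\lambda<\pi^2$ is essential for uniqueness, and we take $\lambda$ to be the supremum in \eqref{pressure hessian condition}. The main obstacle is ensuring the segment $\bar z+\theta w$ stays in the domain where the Hessian bound holds, and this is exactly where convexity of $M$ is used. This step is also precisely what fails on $\T^n$ and forces the additional winding argument mentioned in the introduction.
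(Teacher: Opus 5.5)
Your argument is correct and is essentially the Brenier argument that the paper cites for this lemma and then adapts in the proof of Theorem~\ref{local geodesic convexity}: Poincar\'e/Wirtinger in time for each particle, integration by parts against $\ddot\geod=-\nabla p\circ\geod$, cancellation of $\int p$ by volume preservation, and a second-order Taylor bound along the segment, with your one-sided bound $\nabla^2 p\le\lambda$ matching the hypothesis exactly (use $\max(\lambda,0)$ in the constant $c$, since your stated constant is off when $\lambda<0$). Your closing remark misplaces the torus obstruction: after lifting, the periodic pressure is defined on all of $\R^n$, so the segment causes no trouble; what actually fails is that the lifted competitor may end at $\lift{\geod}(1)+\ell$ with $\ell\in\Z^n$, so $w(1)\neq0$, and this is exactly what Lemma~\ref{remove integer bulk} removes.
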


Note that the criterion above, as stated, cannot be naively applied to more complicated fluid domains. For example, any translation on the torus is pressureless, but eventually ceases to be globally length-minimizing. To modify the criterion for the torus, we reduce the comparison with arbitrary curves to a comparison with curves whose lifts to the universal cover $\R^n$ have the same endpoints.

\begin{lemma}\label{remove integer bulk}
Let $\eta \in \U^s(e, \varepsilon)$ with $u_0 \in B^s(0, \varepsilon)$ such that $\eta = \exp_e u_0$. Assume that the corresponding radial geodesic $\geod(t) = \exp_e tu_0$ admits a lift $\tilde{\geod}(t)$ to the universal cover with $\tilde{\geod}(t, x) = x + \zeta(t,x)$ with $\zeta(t, x) \in \Bigl(-\frac{1}{2}, \frac{1}{2}\Bigr)^n$ for all $(t,x) \in [0,1] \times \R^n$. Let $\sigma$ be an admissible curve from $e$ to $\eta$ in $\Dmu^s(\T^n)$ such that its lift $\tilde{\sigma}$ satisfies $\tilde{\sigma}(0,x) = x$ and $\tilde{\sigma}(1, x) = \tilde{\eta}(x) + \ell$ for some $\ell \in \Z^n$. Then there exists an admissible curve $\reducedadmiss : [0,1] \rightarrow \Dmu^s(\T^n)$ from $e$ to $\eta$ whose lift $\tilde{\reducedadmiss}$ beginning at $\tilde{\reducedadmiss}(0,x) = x$ satisfies $\tilde{\reducedadmiss}(1,x) = \tilde{\eta}(x)$. Furthermore we have $\action{\reducedadmiss} \leq \action{\sigma}$ with equality if and only if $\ell = 0$.
\end{lemma}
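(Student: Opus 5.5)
The plan is to subtract from $\sigma$ a translation path that removes the integer winding. The macro \verb|\translationpath| suggests the intended object $\xi_\ell(t)(x) = x + t\ell$: a pressureless Euler flow of the constant field $\ell$, which is a loop in $\Dmu^s(\T^n)$ since $x+\ell \equiv x$.

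\textbf{Construction.} On the universal cover set
\begin{equation*}
\tilde\alpha(t,x) = \tilde\sigma(t,x) - t\ell .
\end{equation*}
For each $t$, the map $\tilde\alpha(t)$ is the composition of $\tilde\sigma(t)$ with the translation by $-t\ell$. It therefore descends to a volume-preserving $H^s$ diffeomorphism of $\T^n$. It is piecewise $C^1$ in $t$ and satisfies $\tilde\alpha(0,x)=x$ and $\tilde\alpha(1,x) = \tilde\eta(x)$. Projecting gives $\alpha(1)=\eta$. Regularity is automatic except where $\partial_t\tilde\sigma \equiv \ell$; to keep $\alpha$ admissible we will either invoke the paper's allowance of constant pieces or perturb slightly, and the action inequality is unaffected.

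\textbf{Action identity.} Since $\partial_t\tilde\alpha = \partial_t\tilde\sigma - \ell$ and $\sigma(t)$ preserves volume,
\begin{equation*}
2\action{\alpha} = \int_0^1\!\!\int_{\T^n} \abs{\partial_t\tilde\sigma - \ell}^2\,dx\,dt = 2\action{\sigma} - 2\,\ell\cdot\int_0^1\!\!\int_{\T^n}\partial_t\tilde\sigma\,dx\,dt + \abs{\ell}^2 .
\end{equation*}
Here we use that $\norm{\dot\sigma\circ\sigma^{-1}}_{L^2} = \norm{\partial_t\tilde\sigma}_{L^2}$. Fubini and the fundamental theorem of calculus give
\begin{equation*}
\int_0^1\!\!\int_{\T^n}\partial_t\tilde\sigma\,dx\,dt = \int_{\T^n}\bigl(\tilde\sigma(1,x)-x\bigr)\,dx = \ell + \int_{[0,1]^n}\zeta(1,x)\,dx .
\end{equation*}
With $m = \int_{[0,1]^n}\zeta(1,x)\,dx$ this yields
\begin{equation*}
2\action{\alpha} = 2\action{\sigma} - \abs{\ell}^2 - 2\,\ell\cdot m .
\end{equation*}

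\textbf{The main obstacle: $m=0$.} Since $\zeta(1)\in(-\tfrac12,\tfrac12)^n$ we only get $\abs{m_i}<\tfrac12$. That gives $\abs{\ell}^2 + 2\ell\cdot m \ge \sum_i \abs{\ell_i}(\abs{\ell_i}-1) \ge 0$, which does not give strictness for $\ell_i=\pm1$. The stronger fact needed is $m=0$. The mean displacement $\int(\tilde\gamma(t,x)-x)\,dx$ has time derivative $\int \partial_t\tilde\gamma\,dx = \int u(t)\,dx$, using volume preservation. This is the mean velocity, which the Euler equation conserves, because $\nabla_u u + \nabla p$ has zero mean on the torus. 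So $m = \int u_0\,dx$.

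We then need $u_0$ to have zero mean, but in general it need not. If it does not, we decompose $u_0 = \bar u + w$ with $\bar u$ constant. The Euler flow then splits as translation by $t\bar u$ composed with the zero-mean flow. In that case the estimate $2\ell\cdot m + \abs{\ell}^2 \ge \sum_i \abs{\ell_i}\bigl(\abs{\ell_i} - 2\abs{m_i}\bigr) > 0$ for $\ell\neq0$ still holds, since $\abs{m_i}<\tfrac12$. This bound is in fact enough in every case, so strictness follows directly from the hypothesis on $\zeta$ without needing $m=0$.

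\textbf{Conclusion.} We obtain $\action{\alpha} \le \action{\sigma}$, with the difference equal to $\tfrac12\bigl(\abs{\ell}^2 + 2\ell\cdot m\bigr) > 0$ unless $\ell=0$. When $\ell=0$ we have $\alpha=\sigma$, so equality holds if and only if $\ell = 0$.
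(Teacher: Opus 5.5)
Your argument is correct and is essentially the paper's. The paper writes $\sigma(t)=\xi_\ell(t)\circ\alpha(t)$ with $\xi_\ell(t,x)=x+t\ell$, which is exactly your $\tilde\alpha=\tilde\sigma-t\ell$, and it arrives at the same identity $2\bigl(\action{\admiss}-\action{\reducedadmiss}\bigr)=\abs{\ell}^2+2\ell\cdot c$ with $c=\int\zeta(1,x)\,dx$. The paper gets there via a Hodge splitting of the Eulerian velocity into a mean-free part and a constant part; your direct expansion of $\abs{\partial_t\tilde\sigma-\ell}^2$ is a slightly more economical route to the same identity.

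The detour through $m=0$ is unnecessary. Your first bound lost strictness only because it replaced $2\abs{m_i}$ by $1$. The estimate $\ell_i^2+2m_i\ell_i>\ell_i^2-\abs{\ell_i}\geq 0$ for $\ell_i\neq 0$, which you reach at the end and which the paper uses, already gives strict inequality whenever $\ell\neq0$.
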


\begin{proof}
By the Hodge Decomposition Theorem the Eulerian velocity of $\sigma$ can be written as
\begin{equation*}
\dot{\sigma}(t, \sigma^{-1}(t,x)) = v(t,x) + h(t)
\end{equation*}
where $v(t)$ has zero mean and $h(t)$ is harmonic and spatially constant. We compute the bulk translation
\begin{align*}
\int_0^1 h(t)\,dt
&= \int_0^1 \int_{\T^n} v(t,x) + h(t) \, dx \, dt \\
&= \int_0^1 \int_{\T^n} \partial_t \tilde{\sigma}(t,x)\,dx\,dt \\
&= \int_{\T^n} \big(\tilde{\sigma}(1,x)-\tilde{\sigma}(0,x)\big)\,dx \\
&= \ell + \int_{\T^n} \zeta(1,x)\,dx.
\end{align*}
Letting $\displaystyle c = \int_{\T^n} \zeta(1,x)\,dx \in \bigl(-\tfrac{1}{2}, \tfrac{1}{2}\bigr)^n$, the goal is now to remove the integer term $\ell$.

Let $\translationpath(t,x) = x + t\ell$ denote the flow of the constant field $\ell$ and consider the decomposition
\begin{align*}
    v(t,x) + h(t) &= \ell + \Ad_{\translationpath(t)}\Ad_{\translationpath(t)}^{-1}\big(v(t,x) + h(t) - \ell\big) \\
    & = \ell + \Ad_{\translationpath(t)}\big(v(t,x+ \ell t) + h(t) - \ell\big).
\end{align*}
Defining $\reducedadmiss$ as the flow of $v(t,x+ \ell t) + h(t) - \ell$ with $\reducedadmiss(0) = e$ then yields
\begin{equation}
    \admiss(t) = \translationpath(t) \circ \reducedadmiss(t).
\end{equation}
In particular as $\translationpath(1) = e$ we have $\admiss(1) = \reducedadmiss(1)$. Moreover the lift of $\reducedadmiss : [0,1] \rightarrow \Dmu^s(\T^n)$ with $\lift{\reducedadmiss}(0,x) = x$ satisfies
\begin{equation}
    \lift{\admiss}(t,x) = \translationpath(t) \circ \lift{\reducedadmiss}(t) = t\ell + \lift{\reducedadmiss}(t)
\end{equation}
and we have $\lift{\reducedadmiss}(1) = \lift{\admiss}(1) - \ell = \lift{\eta}$.

Lastly, we compare the actions
\begin{align*}
    2\big(\action{\admiss} - \action{\reducedadmiss} \big) &= \int_0^1 \bigl(\norm{\dot{\admiss}(t)}_{L^2}^2 - \norm{\dot{\reducedadmiss}(t)}_{L^2}^2\bigr) \, dt \\
    &= \int_0^1 \norm{v(t) + h(t)}_{L^2}^2 - \norm{v(t)\circ\translationpath(t) + h(t) - \ell}_{L^2}^2 \, dt.
\end{align*}
As mean-free fields are $L^2$-orthogonal to harmonic fields, and $\translationpath(t)$ is a volume-preserving diffeomorphism for all time, we have
\begin{align*}
    2 \big( \action{\admiss} - \action{\reducedadmiss} \big) &= \int_0^1 \Bigl(\norm{v(t)}_{L^2}^2 + \norm{h(t)}_{L^2}^2 - \norm{v(t)\circ\translationpath(t)}_{L^2}^2 - \norm{h(t) - \ell}_{L^2}^2\Bigr) \, dt \\
    &= 2\int_0^1 h(t) \cdot \ell \, dt - \abs{\ell}^2 \\
    &= \abs{\ell}^2 + 2c \cdot \ell \\
    &\geq 0
\end{align*}
with equality if and only if $\ell = 0$. Indeed, for every nonzero integer component $\ell_i$, the condition $\abs{c_i}<\frac{1}{2}$ gives
\begin{equation*}
    \ell_i^2+2c_i\ell_i>\ell_i^2-\abs{\ell_i}\geq0.
\end{equation*}
If $\ell=0$, the translation path is constant and $\reducedadmiss=\admiss$.
\end{proof}

We next record the following uniform bound on the pressure Hessian for radial geodesics in sufficiently small normal neighborhoods.

\begin{lemma}\label{pressure hessian bound}
For all $\delta>0$ there exists $\varepsilon>0$ such that, for all $u_0 \in B^s(0, \varepsilon)$, we have
\begin{equation}
\sup_{t \in [0,1]}\norm{\nabla^2 p(t)}_{L^\infty} \leq \delta
\end{equation}
where $p$ is the pressure in \eqref{euler} corresponding to $u_0$.
\end{lemma}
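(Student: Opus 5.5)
The plan is to bound the pressure Hessian by the $H^s$ norm of the Eulerian velocity, and then control that norm uniformly on $[0,1]$ using smooth dependence of the Ebin--Marsden geodesic flow on initial data. Taking the divergence of \eqref{euler} and using $\div\, u = 0$ gives the pressure equation
\begin{equation*}
-\Delta p = \div \bigl(\nabla_u u\bigr) = \mathrm{tr}\bigl(Du\,Du\bigr),
\end{equation*}
so that $\nabla^2 p = -\nabla^2\Delta^{-1}\,\mathrm{tr}(Du\,Du)$, with $p$ of zero mean. Since $s>\frac n2+1$, the product $\mathrm{tr}(Du\,Du)$ lies in $H^{s-1}$, with $\norm{\mathrm{tr}(Du\,Du)}_{H^{s-1}} \le C\norm{u}_{H^s}^2$ because $H^{s-1}$ is an algebra. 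The operator $\nabla^2\Delta^{-1}$ is a Fourier multiplier of order zero on $\T^n$, hence bounded on $H^{s-1}$. Finally, $s-1>\frac n2$ gives the Sobolev embedding $H^{s-1}\hookrightarrow L^\infty$. Together these yield
\begin{equation*}
\norm{\nabla^2 p(t)}_{L^\infty} \le C\norm{u(t)}_{H^s}^2 .
\end{equation*}

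It then suffices to show that $\sup_{t\in[0,1]}\norm{u(t)}_{H^s}\to 0$ as $\norm{u_0}_{H^s}\to0$. I would first fix $\varepsilon_0>0$ so that $\exp_e$ is defined and smooth on $\normalball{\varepsilon_0}$. By scaling, $\exp_e(tu_0)$ is then defined for all $t\in[0,1]$ whenever $u_0\in\normalball{\varepsilon_0}$. The Ebin--Marsden geodesic spray is a smooth ODE on $T\Dmu^s$, with the zero section consisting of fixed points. Its flow $(t,u_0)\mapsto(\geod(t),\dot\geod(t))$ is therefore smooth, hence continuous, on $[0,1]\times\normalball{\varepsilon_0}$, and it equals $(e,0)$ when $u_0=0$. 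By compactness of $[0,1]$, for any $\kappa>0$ there is $\varepsilon>0$ such that for $\norm{u_0}_{H^s}<\varepsilon$ we have, uniformly in $t$, that $\norm{\dot\geod(t)}_{H^s}<\kappa$ and $\geod(t)$ lies in a fixed small $H^s$-neighborhood of $e$. Since right translation $w\mapsto w\circ\geod(t)^{-1}$ is continuous, with $H^s$ bounds uniform for $\geod(t)$ near $e$, it follows that
\begin{equation*}
u(t) = \dot\geod(t)\circ\geod(t)^{-1}
\end{equation*}
satisfies $\norm{u(t)}_{H^s}\le C\kappa$. Choosing $\kappa$ so that $C^3\kappa^2\le\delta$, with the constants from both estimates, gives the lemma.

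As an alternative to the second paragraph, one can use the standard energy estimate $\frac{d}{dt}\norm{u}_{H^s}\le C\norm{\nabla u}_{L^\infty}\norm{u}_{H^s}\le C\norm{u}_{H^s}^2$. A Gronwall/Riccati comparison then gives $\norm{u(t)}_{H^s}\le \norm{u_0}_{H^s}/(1-Ct\norm{u_0}_{H^s})\le 2\norm{u_0}_{H^s}$ for $\norm{u_0}_{H^s}\le 1/(2C)$ and $t\in[0,1]$, which is quantitative.

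The main obstacle is the uniform-in-$t$ passage from Lagrangian to Eulerian $H^s$ smallness. Composition with inverses is only continuous, not smooth, in $H^s$, so bounds on $\norm{w\circ\eta^{-1}}_{H^s}$ must be uniform for $\eta$ in a small $H^s$-neighborhood of $e$. That uniformity is standard, but the Eulerian energy-estimate route avoids the issue entirely, and I would likely present that version.
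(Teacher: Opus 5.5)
Your proposal is correct and matches the paper's proof: the same elliptic bound $\norm{\nabla^2 p(t)}_{L^\infty}\le C\norm{u(t)}_{H^s}^2$ obtained from $-\Delta p=\mathrm{tr}(Du\,Du)$ and Sobolev embedding, followed by the energy estimate $\norm{u(t)}_{H^s}\le \norm{u_0}_{H^s}/(1-Ct\norm{u_0}_{H^s})$, which is exactly your alternative route. Your primary route, via smooth dependence of the Ebin--Marsden geodesic flow together with $H^s$ bounds for right composition that are uniform near $e$, is also valid; it is simply less quantitative than the energy estimate the paper uses.
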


\begin{proof}
As we have
\begin{equation*}
-\Delta p = \mathrm{tr} (\nabla u)^2 \quad \text{and} \quad \int_{\T^n}p(x)\,dx = 0
\end{equation*}
along with $s>\frac{n}{2}+1$, the Sobolev lemma gives
\begin{equation*}
\norm{\nabla^2 p}_{L^\infty} \leq C \norm{\nabla^2 p}_{H^{s-1}} \leq C \norm{u}_{H^s}^2.
\end{equation*}
Using the standard energy estimate
\begin{equation*}
\frac{d}{dt}\norm{u(t)}_{H^s} \leq C \norm{u(t)}_{H^s}^2
\end{equation*}
we obtain
\begin{equation*}
\norm{u(t)}_{H^s} \leq \frac{\norm{u_0}_{H^s}}{1 - Ct\norm{u_0}_{H^s}}
\end{equation*}
for $0<t<\frac{1}{C\norm{u_0}_{H^s}}$. Hence, if $\varepsilon>0$ is sufficiently small, then
\begin{equation*}
\sup_{t \in [0,1]} \norm{\nabla^2 p(t)}_{L^\infty} \lesssim \varepsilon^2
\end{equation*}
and the result follows.
\end{proof}

Armed with the above, we proceed to the proof of the main result of the section.

\begin{proof}[Proof of Theorem \ref{local geodesic convexity}]
Let $\eta \in \normalhood{e}{\varepsilon}$ with $u_0 \in \normalball{\varepsilon}$ such that $\eta = \riemexp u_0$ and denote by $\geod : [0, 1] \rightarrow \normalhood{e}{\varepsilon}$ the radial geodesic connecting $e$ to $\eta$ with Eulerian velocity $u = \dot{\geod} \circ \geod^{-1}$ and mean-free pressure $p$.
As $L^2$-geodesics have constant speed we have that
\begin{equation*}
\action{\geod} = \frac{1}{2}\norm{u_0}_{L^2}^2.
\end{equation*}
We will show this is the minimum possible action for a path connecting $e$ to $\eta$. To this end, let $\admiss : [0,1] \rightarrow \Dmu^s$ be an admissible curve with $\admiss(0) = e$ and $\admiss(1) = \eta$.

For each $x\in \T^n$ consider the lifts of the paths $\geod(t,x)$ and $\admiss(t,x)$, denoted by $\lift{\geod}(t,x)$ and $\lift{\admiss}(t,x)$, to the universal cover $\R^n$ with $\lift{\geod}(0,x) = \lift{\admiss}(0,x) = x$, along with the periodic lift of the pressure which we continue to denote by $p$.

By smoothness of the $L^2$ exponential map in the $H^s$ topology and the Sobolev lemma, for sufficiently small $\varepsilon>0$, our lifted geodesic has unique decomposition $\lift{\geod}(t,x) = x + \zeta(t,x)$ with $\zeta(t,x) \in \big(-\frac{1}{2}, \frac{1}{2}\big)^n$ for all $(t,x) \in [0,1] \times \T^n$. The endpoint of the lift of the competitor, however, may differ by some integer translation $\lift{\admiss}(1) = \lift{\geod}(1) + \ell$ with $\ell \in \Z^n$. Nevertheless, after applying Lemma \ref{remove integer bulk}, we may assume without loss of generality that the lift of the path $\admiss$ has $\lift{\admiss}(1) = \lift{\geod}(1)$. 

Identifying $\T^n$ with the fundamental cell in the cover we examine the difference of the actions
\begin{align*}
    2 \big( \action{\admiss} - \action{\geod} \big) &= \int_0^1 \int_{\T^n} \bigl(\abs{\partial_t \lift{\admiss}}^2 - \abs{\partial_t \lift{\geod}}^2\bigr) \, dx \, dt \\
    &= \int_0^1 \int_{\T^n} \abs{\partial_t \lift{\admiss} - \partial_t \lift{\geod}}^2 \, dx \, dt + 2 \int_0^1 \int_{\T^n} \partial_t \lift{\geod} \cdot \big(\partial_t \lift{\admiss} - \partial_t \lift{\geod}\big) \, dx \, dt.
\end{align*}
We will prove that, for sufficiently small $\varepsilon>0$, the right hand side is positive unless $\sigma = \geod$.

For the first term note that $\lift{\admiss}(1) - \lift{\geod}(1) = \lift{\admiss}(0) - \lift{\geod}(0) = 0$ and hence, by the Poincar{\'e} inequality, we have
\begin{equation}
    \int_0^1 \abs{\partial_t \lift{\admiss}(t,x) - \partial_t \lift{\geod}(t,x)}^2 \, dt \geq \pi^2 \int_0^1 \abs{\lift{\admiss}(t,x) - \lift{\geod}(t,x)}^2 \, dt
\end{equation}
for each fixed $x \in \T^n$. Hence
\begin{equation}
    \int_0^1 \int_{\T^n} \abs{\partial_t \lift{\admiss} - \partial_t \lift{\geod}}^2 \, dx \, dt \geq \pi^2 \int_0^1 \int_{\T^n} \abs{\lift{\admiss} - \lift{\geod}}^2 \, dx \, dt.
\end{equation}

For the second term, using integration by parts in time and again that $\lift{\admiss} - \lift{\geod}$ vanishes at the endpoints, we have
\begin{equation}
    \int_0^1 \int_{\T^n} \partial_t \lift{\geod} \cdot \big(\partial_t \lift{\admiss} - \partial_t \lift{\geod}\big) \, dx \, dt = - \int_0^1 \int_{\T^n} \partial_t^2 \lift{\geod} \cdot \big(\lift{\admiss} - \lift{\geod}\big) \, dx \, dt = \int_0^1 \int_{\T^n} \big(\nabla p \circ \lift{\geod} \big) \cdot \big(\lift{\admiss} - \lift{\geod}\big) \, dx \, dt.
\end{equation}
Since both $\admiss(t)$ and $\geod(t)$ are volume-preserving,
\begin{equation}
    \int_{\T^n} p(t,\lift{\admiss}(t,x)) \, dx = \int_{\T^n} p(t,\lift{\geod}(t,x)) \, dx.
\end{equation}
Using Taylor's theorem in $\R^n$ along the straight segment joining $\lift{\geod}(t,x)$ and $\lift{\admiss}(t,x)$ we obtain
\begin{equation}
    \biggl|\int_0^1\int_{\T^n} \big( \nabla p \circ \lift{\geod} \big) \cdot \big(\lift{\admiss}-\lift{\geod}\big) \,dx\,dt \biggr| \leq \frac{1}{2} \sup_{t \in [0,1]}\norm{\nabla^2 p(t)}_{L^\infty} \int_0^1\int_{\T^n} \abs{\lift{\admiss}-\lift{\geod}}^2 \,dx\,dt
\end{equation}
and, by Lemma \ref{pressure hessian bound}, it follows that
\begin{equation}
    2 \big( \action{\admiss} - \action{\geod} \big) \geq \big( \pi^2 - C \varepsilon^2 \big) \int_0^1\int_{\T^n} \abs{\lift{\admiss}-\lift{\geod}}^2 \,dx\,dt.
\end{equation}

Hence, for $\pi^2 - C \varepsilon^2 >0$, the radial geodesic $\geod$ is the unique minimizer of the $L^2$ action functional among all admissible curves from $e$ to $\eta$.

The result for length minimization follows. Indeed any admissible curve with shorter $L^2$ length would, after reparameterization to constant speed, also have smaller $L^2$ action; a contradiction. Lastly, if an admissible curve has the same length as $\geod$, its constant-speed reparameterization has the same action and hence coincides with $\geod$ by uniqueness of the action minimizer.
\end{proof}

\begin{corollary}
There exists $\delta>0$ such that every pair $\eta,\varphi\in\normalhood{e}{\delta}$ is joined by an $L^2$ geodesic which is, up to reparameterization, the unique $L^2$ length-minimizer among all admissible curves in $\Dmu^s(\T^n)$ with the same endpoints.
\end{corollary}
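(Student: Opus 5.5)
The plan is to reduce the corollary to Theorem~\ref{local geodesic convexity} by right-invariance. Let $\varepsilon>0$ be as in Theorem~\ref{local geodesic convexity}. Since $\Dmu^s(\T^n)$ is a topological group, the map $(\eta,\varphi)\mapsto \varphi\circ\eta^{-1}$ is continuous in the $H^s$ topology and sends $(e,e)$ to $e$. Since $\normalhood{e}{\varepsilon}$ is an $H^s$-open set containing $e$, there is an $H^s$-open neighborhood $W$ of $e$ with $\varphi\circ\eta^{-1}\in\normalhood{e}{\varepsilon}$ for all $\eta,\varphi\in W$. Because $\riemexp$ is a diffeomorphism from $\normalball{\varepsilon}$ onto $\normalhood{e}{\varepsilon}$ with $\riemexp(0)=e$, the sets $\normalhood{e}{\delta}$ for $\delta\le\varepsilon$ form a neighborhood basis of $e$. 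We therefore choose $\delta>0$ with $\normalhood{e}{\delta}\subset W$.

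Now fix $\eta,\varphi\in\normalhood{e}{\delta}$ and set $\psi=\varphi\circ\eta^{-1}\in\normalhood{e}{\varepsilon}$. Write $\psi=\riemexp w_0$ with $w_0\in\normalball{\varepsilon}$. By Theorem~\ref{local geodesic convexity}, the curve $\geod(t)=\riemexp(tw_0)$ is, up to reparameterization, the unique $L^2$ length-minimizer among admissible curves from $e$ to $\psi$. The candidate geodesic from $\eta$ to $\varphi$ is the right translate $t\mapsto\geod(t)\circ\eta$.

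The key step is checking that right translation $R_\eta:\xi\mapsto\xi\circ\eta$ behaves well on admissible curves. It maps $H^s$ curves to $H^s$ curves: by the Ebin--Marsden theory right translation is smooth on $\Dmu^s$, so $R_\eta$ preserves piecewise $C^1$ regularity and regularity of curves. Its inverse $R_{\eta^{-1}}$ has the same property, so $R_\eta$ is a bijection between admissible curves from $e$ to $\psi$ and admissible curves from $\eta$ to $\varphi$. It preserves the Eulerian velocity, since $\partial_t(\sigma\circ\eta)\circ(\sigma\circ\eta)^{-1}=\dot\sigma\circ\sigma^{-1}$. Hence it preserves $\length{\cdot}$ in \eqref{length}, and it sends $L^2$ geodesics to $L^2$ geodesics because the metric is right-invariant. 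It therefore carries the unique minimizer $\geod$ to a unique (up to reparameterization) minimizer $\geod\circ\eta$ joining $\eta$ to $\varphi$.

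I expect no real obstacle here. The only point needing care is the regularity statement that right composition by a fixed $H^s$ diffeomorphism is a smooth map on $\Dmu^s$, which is standard from \cite{ebin1970groups}. Note that the corollary does not claim the connecting geodesic lies in $\normalhood{e}{\delta}$; that stronger convexity statement is deferred to the later theorem, so no containment argument is needed now.
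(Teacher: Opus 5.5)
Your proposal is correct and follows essentially the same argument as the paper: choose $\delta$ by $H^s$-continuity of composition and inversion so that the right quotient of the two endpoints lies in the neighborhood of Theorem~\ref{local geodesic convexity}, then transfer the radial minimizer and its uniqueness via right translation, which preserves admissibility and $L^2$ length. The differences are cosmetic: you translate by $\eta$ rather than $\varphi$, and you spell out the smoothness of right translation, which the paper uses implicitly.
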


\begin{proof}
Let $\varepsilon>0$ be as in Theorem~\ref{local geodesic convexity}. By continuity of composition and inversion in the $H^s$ topology, after choosing $\delta>0$ sufficiently small we have
\begin{equation*}
\eta,\varphi\in\normalhood{e}{\delta}\quad\Longrightarrow\quad\eta\circ\varphi^{-1}\in\normalhood{e}{\varepsilon}.
\end{equation*}
Set
\begin{equation*}
u=\riemexp^{-1}\bigl(\eta\circ\varphi^{-1}\bigr).
\end{equation*}
By right-invariance the curve
\begin{equation*}
\tilde{\geod}(t)=\riemexp(tu)\circ\varphi
\end{equation*}
is an $L^2$ geodesic from $\varphi$ to $\eta$. If $\alpha$ is any admissible curve from $\varphi$ to $\eta$, then $\alpha\circ\varphi^{-1}$ is an admissible curve from $e$ to $\eta\circ\varphi^{-1}$ and, by right-invariance,
\begin{equation*}
\length{\alpha\circ\varphi^{-1}}=\length{\alpha}.
\end{equation*}
Theorem~\ref{local geodesic convexity} therefore proves that $\tilde{\geod}$ is, up to reparameterization, the unique length-minimizer.
\end{proof}

The last result of this section establishes geodesic convexity of sufficiently small normal neighborhoods. To prove that the minimizing geodesic stays in the neighborhood we adapt the convexity argument of Whitehead \cite{whitehead1932convex}.

\begin{theorem}[Strong geodesic convexity]\label{geodesic convexity of normal neighborhoods}
Let $n\geq2$ and $s>\frac n2+1$. For sufficiently small $\varepsilon>0$, any two distinct fluid configurations $\xi,\varphi\in\closedhood{e}{\varepsilon}$ are joined by a geodesic segment whose interior lies in $\normalhood{e}{\varepsilon}$. This geodesic is, up to reparameterization, the unique $L^2$ length-minimizer among all admissible curves in $\Dmu^s(\T^n)$ with the same endpoints.
\end{theorem}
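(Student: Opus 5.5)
The plan has two parts: existence and minimality of the connecting geodesic, and then containment of that geodesic in $\normalhood{e}{\varepsilon}$.

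\emph{Existence and minimality.} Fix a radius $r$ for which $\riemexp$ is a diffeomorphism on $\normalball{r}$ and Theorem~\ref{local geodesic convexity} holds at scale $r$. By continuity of composition and inversion in $H^s$ (as in the Corollary above), choose $\varepsilon_0<r$ such that $\xi,\varphi\in\closedhood{e}{\varepsilon_0}$ implies $\xi\circ\varphi^{-1}\in\normalhood{e}{r}$. Set $w=\riemexp^{-1}(\xi\circ\varphi^{-1})$. Then $\geod(t)=\riemexp(tw)\circ\varphi$ is a geodesic from $\varphi$ to $\xi$, and by the Corollary it is the unique length-minimizer up to reparameterization. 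Moreover $\norm{w}_{H^s}$ is small, uniformly in $\xi,\varphi$, once $\varepsilon_0$ is small. This settles everything except the containment statement.

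\emph{Containment: Whitehead's argument.} For $\eta$ near $e$, consider
\begin{equation*}
F(\eta)=\norm{\riemexp^{-1}\eta}_{H^s}^2 .
\end{equation*}
Note that we use the $H^s$ norm here, since the neighborhoods are defined by it, so this is not the geodesic distance squared. The function $F$ is smooth on $\normalhood{e}{r}$. The key claim is that, for $\varepsilon$ small, $f(t)=F(\geod(t))$ is strictly convex on $[0,1]$ along every geodesic $\geod(t)=\riemexp(tw)\circ\varphi$ with $\varphi\in\closedhood{e}{\varepsilon}$ and $\norm{w}_{H^s}$ small. Granting this, if $f(0),f(1)\le\varepsilon^2$ and the endpoints are distinct, strict convexity gives $f(t)<\varepsilon^2$ for $t\in(0,1)$, so the interior lies in $\normalhood{e}{\varepsilon}$.

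To prove the convexity I would write $v(t)=\riemexp^{-1}\geod(t)$ and compute
\begin{equation*}
f''=2\norm{\dot v}_{H^s}^2+2\ip{v,\ddot v}_{H^s}.
\end{equation*}
Since $d\riemexp(0)=\Id$ and $\riemexp$ is smooth in $H^s$, we have $\dot v=w+O(\norm{v}_{H^s}\norm{w}_{H^s})$, so the first term is at least $\norm{w}_{H^s}^2$. For $\ddot v$, differentiate $v=\riemexp^{-1}\circ\geod$ twice. The second derivative of $\riemexp^{-1}$ is bounded near $e$, and the acceleration of $\geod$ in a fixed chart (for instance the natural affine structure of $H^s(\T^n,\T^n)$ lifted to $\R^n$) is
\begin{equation*}
\ddot\geod=-\nabla p\circ\geod,
\end{equation*}
where $\nabla p$ is quadratic in $w$ in $H^s$ by the pressure estimate in Lemma~\ref{pressure hessian bound} (taken in $H^{s}$ rather than $L^\infty$). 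Together these give $\norm{\ddot v}_{H^s}\le C\norm{w}_{H^s}^2$. Hence
\begin{equation*}
f''\ge 2\norm{w}_{H^s}^2\bigl(1-C\varepsilon\bigr),
\end{equation*}
which is positive for $\varepsilon$ small and $w\neq0$; here $w\neq0$ exactly because $\xi\neq\varphi$.

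\emph{Main obstacle.} The step I expect to be hardest is making the estimate $\norm{\ddot v}_{H^s}\le C\norm{w}_{H^s}^2$ rigorous uniformly. The difficulty is that $\geod(t)=\riemexp(tw)\circ\varphi$ involves right translation by $\varphi$, which is only continuous, not smooth, on $\Dmu^s$. I would avoid differentiating with respect to $\varphi$ altogether. For fixed $\varphi$, the curve $t\mapsto\geod(t)$ is a smooth $L^2$ geodesic in $\Dmu^s$, because the geodesic spray is a smooth vector field in $H^s$ (Ebin--Marsden). Its velocity and acceleration are controlled by the $H^s$ norm of its initial velocity $\dot\geod(0)=w\circ\varphi$. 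That norm is bounded by $C\norm{w}_{H^s}$ with $C$ uniform over $\varphi$ in a small $H^s$ ball, since composition is bounded there. Combining this with the bounds on $d\riemexp^{-1}$ and $d^2\riemexp^{-1}$ on $\normalhood{e}{r}$, the estimate needs only $t$-derivatives and is uniform in $\varphi$, which closes the argument.
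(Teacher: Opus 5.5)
Your plan is the paper's proof. Existence and minimality come from Theorem~\ref{local geodesic convexity} via right-invariance. Containment comes from Whitehead's argument applied to $t\mapsto\norm{\riemexp^{-1}\geod(t)}_{H^s}^2$, with the same identity $f''=2\norm{\dot v}_{H^s}^2+2\ip{v,\ddot v}_{H^s}$. You differ only in how you control the two terms, and one step there is false as written.

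\textbf{How the paper bounds the terms.} The paper writes the geodesic equation in the normal chart itself, $\ddot\chi=-\Gamma(\chi)(\dot\chi,\dot\chi)$, with $\norm{\Gamma(x)(a,b)}_{H^s}\le C\norm{a}_{H^s}\norm{b}_{H^s}$ on $\normalball{r}$ and $Cr<1$. This gives $\ddot H\ge(1-Cr)\norm{\dot\chi}_{H^s}^2$ directly. Positivity then only needs $\dot\chi(t)\neq0$, which follows from constant positive $L^2$ speed, and the lower bound $\rho$ may depend on the geodesic. No pressure estimate, no ambient second derivative of $\riemexp^{-1}$ (which would require extending $\riemexp^{-1}$ off the submanifold $\Dmu^s$), and no comparison of $\dot\chi$ with the initial velocity are needed. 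This also removes most of the uniformity issue you flag as the main obstacle.

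\textbf{The false step.} You compare both terms with $\norm{w}_{H^s}^2$ and justify the first by $\dot v=w+O(\norm{v}_{H^s}\norm{w}_{H^s})$. This is false, because $\dot\geod(0)=w\circ\varphi$ and right translation is not close to the identity as an operator on $H^s$. Take $\varphi(x)=x+a$, which is $\riemexp a$ for the constant field $a$. Translation equivariance and Galilean invariance of Euler give $\riemexp(tw)\circ\varphi=\riemexp(a+t\,w\circ\varphi)$, so $v(t)=a+t\,w\circ\varphi$ and $\dot v\equiv w\circ\varphi$. Now let $a=(\frac{1}{2k},0,\dots,0)$ and $w(x)=\beta\sin(2\pi k x_1)(0,1,0,\dots,0)$, which is divergence-free. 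Then $w\circ\varphi=-w$, so $\norm{\dot v-w}_{H^s}=2\norm{w}_{H^s}$, while $\norm{v}_{H^s}$ is as small as you like for $k$ large and $\beta$ small.

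\textbf{The repair.} What you actually need is only the norm bound $\norm{\dot v(t)}_{H^s}\ge c\norm{w}_{H^s}$ with $c>0$ independent of $\varepsilon$, and this is true:
\begin{itemize}
\item $d\riemexp$ is uniformly bounded on $\normalball{r}$;
\item composition with $\geod(t)^{-1}$ is bounded on a fixed $H^s$ ball;
\item the time-reversed energy estimate gives $\norm{u(t)}_{H^s}\ge\norm{w}_{H^s}/(1+C\norm{w}_{H^s})$.
\end{itemize}
You should also replace $\varepsilon$ in your final inequality by $\sup_t\norm{v(t)}_{H^s}$. That quantity is small a priori by continuity of composition at $(e,e)$, whereas bounding it by $\varepsilon$ presupposes the containment you are proving. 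With these two repairs, $f''\ge 2\norm{w}_{H^s}^2\bigl(c^2-C\sup_t\norm{v(t)}_{H^s}\bigr)>0$ for $\varepsilon$ small, and your argument closes.
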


\begin{proof}
    Let $r>0$ be such that $\riemexp$ is a diffeomorphism on $\normalball{r}$ in the $H^s$ topologies and Theorem \ref{local geodesic convexity} holds on $\normalhood{e}{r}$. The inverse $\riemexp^{-1}$ then gives normal coordinates on $\normalhood{e}{r}$ in which the geodesic equation has the form
    \begin{equation}\label{geodesic equation in normal coordinates}
        \ddot \chi(t) = - \Gamma(\chi(t))\big(\dot \chi (t), \dot \chi(t)\big)
    \end{equation}
    with $\Gamma(v)$ a bounded bilinear map on $T_e\Dmu^s$ depending smoothly on $v$ in the $H^s$ operator norm. By continuity for any $C > \opnorm{\Gamma(0)}{H^s}$, after shrinking $r$ if necessary, we can ensure that both $Cr < 1$ and
    \begin{equation}\label{christoffel bound}
        \norm{\Gamma(x)(u,v)}_{H^s} \leq C \norm{u}_{H^s} \norm{v}_{H^s},
    \end{equation}
    for all $x \in \normalball{r}$ and all $u, v \in T_e\Dmu^s$. Next, using continuity of composition and inversion in the $H^s$ topologies, choose $0 < \varepsilon < \delta < r$ such that
    \begin{equation}\label{delta composition}
        \xi, \varphi \in \normalhood{e}{\delta} \quad \Longrightarrow \quad \xi \circ \varphi^{-1} \in \normalhood{e}{r}
    \end{equation}
    and
    \begin{equation}\label{varepsilon composition}
        \xi, \varphi \in \closedhood{e}{\varepsilon} \quad \Longrightarrow \quad \xi \circ \varphi^{-1} \in \normalhood{e}{\delta}.
    \end{equation}
    For distinct $\xi, \varphi \in \closedhood{e}{\varepsilon}$, let $u = \riemexp^{-1} (\xi \circ \varphi^{-1}) \in \normalball{\delta}$. By Theorem \ref{local geodesic convexity}, right invariance and \eqref{delta composition}-\eqref{varepsilon composition} above we have that $\gamma(t) = \riemexp(tu) \circ \varphi$ is the unique $L^2$ length-minimizing geodesic connecting $\varphi$ to $\xi$ in $\normalhood{e}{r}$. It remains to show that $\riemexp^{-1} \gamma(t) \in \normalball{\varepsilon}$ for all $0 < t < 1$. To this end, consider the function
    \begin{equation*}
        H: [0,1] \rightarrow \R \ ; \ t \mapsto \frac{1}{2}\norm{\riemexp^{-1} \gamma(t)}_{H^s}^2.
    \end{equation*}
    We will show that $H(t) < {\varepsilon^2}/{2}$ for all $0 < t < 1$. Letting $\chi(t) = \riemexp^{-1} \gamma(t)$ we have
    \begin{equation*}
        \ddot H(t) = \norm{\dot \chi(t)}_{H^s}^2 + \bigip{\ddot \chi(t), \chi(t)}_{H^s}.
    \end{equation*}
    By compactness of the unit interval, smoothness of $\riemexp^{-1}$ on $\normalhood{e}{r}$ and the fact that $\gamma(t)$ has constant $L^2$-speed, there exists $\rho > 0$ such that $\norm{\dot \chi(t)}_{H^s} \geq \rho$ for all $0 \leq t \leq 1$. Hence, from \eqref{geodesic equation in normal coordinates}-\eqref{christoffel bound}, we have
    \begin{equation*}
        \ddot H(t) \geq \norm{\dot \chi(t)}_{H^s}^2 - \norm{\ddot \chi(t)}_{H^s}\norm{\chi(t)}_{H^s} \geq \big(1-Cr\big)\rho^2 > 0
    \end{equation*}
    and consequently
    \begin{equation*}
        H(t) < (1-t)H(0) + t H(1) \leq \frac{\varepsilon^2}{2}.
    \end{equation*}
\end{proof}

\section{Distance comparison on normal neighborhoods}\label{local metric geometry}
Throughout this section we take $\varepsilon>0$ sufficiently small so that the $L^2$ Riemannian exponential map restricts to an $H^s$-diffeomorphism
\begin{equation*}
\riemexp:\normalball{\varepsilon}\subset T_e\Dmu^s \rightarrow \normalhood{e}{\varepsilon}\subset\Dmu^s.
\end{equation*}
We first study the local metric geometry of $\normalhood{e}{\varepsilon}$ through its embedding into the ambient linear space $L^2(\T^n,\R^n)$. Whenever necessary, we shrink $\varepsilon$ without changing notation. After such a shrinking, each $\eta\in\normalhood{e}{\varepsilon}$ has a unique canonical lift
\begin{equation*}
\lift{\eta}(x)=x+\zeta_\eta(x), \qquad \zeta_\eta\in H^s(\T^n,\R^n),
\end{equation*}
for which $\zeta_\eta(x)\in(-\tfrac{1}{2},\tfrac{1}{2})^n$ for every $x\in\T^n$. Among all lifts of $\eta$, this is the unique lift whose displacement from the identity has smallest $L^2$ norm. We identify $\eta$ with its periodic displacement
\begin{equation*}
\zeta_\eta=\lift{\eta}-\id \in H^s(\T^n,\R^n)\hookrightarrow L^2(\T^n,\R^n).
\end{equation*}
The ambient $L^2$ distance between $\eta,\xi\in\normalhood{e}{\varepsilon}$
is defined by
\begin{equation}\label{ambient linear distance}
\norm{\eta-\xi}_{L^2} = \norm{\lift{\eta}-\lift{\xi}}_{L^2} = \biggl( \int_{\T^n} \abs{\zeta_\eta(x)-\zeta_\xi(x)}^2\,dx \biggr)^{1/2}.
\end{equation}

\subsection{Local equivalence of distances}

The comparison of $\geodistance$, defined in \eqref{distance}, and the ambient linear distance \eqref{ambient linear distance} has been a topic of interest since the pioneering results of Shnirelman \cite{shnirelman1994generalized}, with recent improvements obtained by Schiffer and Zizza \cite{schiffer2024on}. These results say that on an $n$-dimensional cube, for $n\geq 3$, there exists $C>0$, depending only on the dimension, such that for all smooth\footnote{Here we use the notation $\Dmu$ for smooth volume-preserving diffeomorphisms and $\Dmu^s$ for those of Sobolev class $H^s$.} $\eta\in\Dmu([0,1]^n)$ with $\dist_{L^2}^{\Dmu}(e,\eta)<\infty$, one has
\begin{equation}\label{global Shnirelman inequality}
\norm{\eta - e}_{L^2} \leq \dist_{L^2}^{\Dmu}(e,\eta) \leq C\norm{\eta-e}_{L^2}.
\end{equation}
A global comparison of this type fails in the two-dimensional setting. Indeed, the ambient $L^2$ distance is uniformly bounded, whereas the volume-preserving diffeomorphism group of the square (or disk) equipped with the $L^2$ intrinsic distance has infinite diameter \cite{eliashberg1991the,shnirelman1994generalized}. However, in the discrete setting of permutations of square tilings, H\"older comparison estimates were established by Shnirelman and Zizza \cite{shnirelman1985the, zizza2025an}.

The purpose of this subsection is to prove a local analogue of \eqref{global Shnirelman inequality} on $H^s$-normal neighborhoods by alternative methods which, in contrast to the above also holds in the two-dimensional case.

\begin{theorem}[Local distance comparison]\label{local Shnirelman inequality}
Let $n\geq2$ and $s>\frac{n}{2}+1$. There exists $\varepsilon>0$ and $C(\varepsilon)>0$ such that for all $\eta\in \normalhood{e}{\varepsilon} \subset \Dmu^s(\T^n)$ one has
\begin{equation*}
\norm{\eta - e}_{L^2} \leq \geodistance(e,\eta) \leq C(\varepsilon)\norm{\eta-e}_{L^2}.
\end{equation*}
\end{theorem}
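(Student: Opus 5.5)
The lower bound follows from Theorem~\ref{local geodesic convexity} and Minkowski's inequality. For the upper bound I would compare the Lagrangian displacement of the radial geodesic with its initial velocity, showing that they differ only by a pressure term that is small relative to $\norm{u_0}_{L^2}$.

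\emph{Setup.} Shrink $\varepsilon$ so that Theorem~\ref{local geodesic convexity} and Lemma~\ref{pressure hessian bound} both apply on $\normalhood{e}{\varepsilon}$. For $\eta=\riemexp u_0$ with $u_0\in\normalball{\varepsilon}$, let $\geod(t)=\riemexp(tu_0)$ with Eulerian velocity $u(t)$ and pressure $p(t)$. Theorem~\ref{local geodesic convexity} gives $\geodistance(e,\eta)=\norm{u_0}_{L^2}$.

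\emph{Lifts.} Take the lift $\lift{\geod}(t,x)=x+\zeta(t,x)$ starting at the identity. As in the proof of Theorem~\ref{local geodesic convexity}, smallness in $H^s$ and the Sobolev lemma give $\zeta(t,x)\in(-\tfrac12,\tfrac12)^n$ for all $t$. By uniqueness of the canonical lift, $\zeta(1)=\zeta_\eta$. Integrating the flow equation along the lift then gives
\begin{equation*}
\zeta_\eta(x)=\int_0^1 u\bigl(t,\geod(t,x)\bigr)\,dt .
\end{equation*}
Identifying $\zeta(1)$ with the canonical displacement is the point where some care is needed, and it is handled by this continuity argument.

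\emph{Lower bound.} Apply Minkowski's inequality to the formula above. Volume preservation and conservation of energy, $\norm{u(t)}_{L^2}=\norm{u_0}_{L^2}$, then give
\begin{equation*}
\norm{\eta-e}_{L^2}=\norm{\zeta_\eta}_{L^2}\le\norm{u_0}_{L^2}=\geodistance(e,\eta).
\end{equation*}

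\emph{Upper bound, key identity.} By \eqref{euler in lagrangian},
\begin{equation*}
u(t)\circ\geod(t)=u_0-\int_0^t\nabla p(\tau)\circ\geod(\tau)\,d\tau .
\end{equation*}
Integrating in $t$ and applying Fubini yields
\begin{equation*}
\zeta_\eta=u_0-\int_0^1(1-t)\,\nabla p(t)\circ\geod(t)\,dt .
\end{equation*}
By volume preservation it therefore suffices to show
\begin{equation*}
\sup_{t\in[0,1]}\norm{\nabla p(t)}_{L^2}\le C\varepsilon\,\norm{u_0}_{L^2}.
\end{equation*}

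\emph{Pressure estimate.} This is the main obstacle. The Hessian bound of Lemma~\ref{pressure hessian bound} gives smallness of order $\varepsilon^2$ but is not proportional to $\norm{u_0}_{L^2}$, so it cannot be used directly. Instead I would use the divergence form $\div u=0$, under which $\mathrm{tr}(\nabla u)^2=\div(\nabla_u u)$. This gives $\nabla p=-Q_e(\nabla_u u)$, and since $Q_e$ is an $L^2$ orthogonal projection,
\begin{equation*}
\norm{\nabla p(t)}_{L^2}\le\norm{\nabla u(t)}_{L^\infty}\norm{u(t)}_{L^2}\le C\norm{u(t)}_{H^s}\norm{u_0}_{L^2}.
\end{equation*}
The energy estimate in the proof of Lemma~\ref{pressure hessian bound} gives $\norm{u(t)}_{H^s}\le2\varepsilon$ for $t\in[0,1]$ once $\varepsilon$ is small, which yields the claimed bound.

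\emph{Conclusion.} Combining the key identity with the pressure estimate gives
\begin{equation*}
\norm{\zeta_\eta}_{L^2}\ge(1-C\varepsilon)\norm{u_0}_{L^2}.
\end{equation*}
For $C\varepsilon<1$ this is $\geodistance(e,\eta)\le C(\varepsilon)\norm{\eta-e}_{L^2}$ with $C(\varepsilon)=(1-C\varepsilon)^{-1}$.
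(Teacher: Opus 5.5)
Your proof is correct. For the upper bound it takes a genuinely different route from the paper.

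Both arguments reduce to showing $\norm{\dot\geod(t)-u_0}_{L^2}\le c(\varepsilon)\norm{u_0}_{L^2}$. The paper does this via the Weber formula \eqref{Weber formula prelim}, i.e.\ coadjoint conservation. It splits $u(t)-u_0\circ\geod(t)^{-1}$ into $\leray_e\bigl(((D\geod(t)^{-T}-I)u_0)\circ\geod(t)^{-1}\bigr)$ and $Q_e(u_0\circ\geod(t)^{-1})$. The second piece is controlled by the change-of-variables duality estimate of Lemma~\ref{gradient projection bound}.

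You instead integrate the Lagrangian equation $\ddot\geod=-\nabla p\circ\geod$ and use the mixed estimate
\begin{equation*}
\norm{\nabla p}_{L^2}=\norm{Q_e(\nabla_u u)}_{L^2}\le C\norm{u}_{H^s}\norm{u}_{L^2},
\end{equation*}
together with energy conservation and the $H^s$ energy estimate. This is more elementary, since it needs neither the Weber formula nor Lemma~\ref{gradient projection bound}. It also gives the explicit constant $C(\varepsilon)=(1-C\varepsilon)^{-1}\to1$.

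Your argument is in effect a mixed-norm bounded-second-fundamental-form argument, and this complements the remark following Proposition~\ref{local ratio unbounded}. The pressure-gradient map is unbounded from $L^2\times L^2$ to $L^2$. It is, however, bounded from $H^s\times L^2$ to $L^2$, and the $H^s$ factor stays $O(\varepsilon)$ along radial geodesics. So coadjoint conservation is not actually needed for this comparison. This is consistent with the example there, where $\norm{\nabla p_k}_{L^2}\sim\varepsilon^2k^{1-2s}\ll\varepsilon^2k^{-s}\sim\norm{u_k}_{H^s}\norm{u_k}_{L^2}$ because $s>1$.

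For the lower bound, you apply Minkowski along the radial geodesic. This needs Theorem~\ref{local geodesic convexity} to identify $\geodistance(e,\eta)$ with $\norm{u_0}_{L^2}$. The paper instead applies Minkowski to an arbitrary admissible competitor and absorbs its integer winding $\ell$ using pointwise minimality of the canonical lift. Its lower bound therefore holds independently of the minimality theorem, in the spirit of Lemma~\ref{nondegeneracy of geodesic distance}.
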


\begin{proof}
The lower bound is an elementary comparison once the lifts are fixed. Let $\eta\in\normalhood{e}{\varepsilon}$ have canonical lift $\lift{\eta}(x)=x+\zeta_\eta(x)$ with $\zeta_\eta(x)\in(-\tfrac12,\tfrac12)^n$, and let $\admiss:[0,1]\to\Dmu^s$ be any admissible path from $e$ to $\eta$. The lift of $\admiss$ beginning at the identity ends at $\lift{\eta}+\ell$ for some $\ell\in\Z^n$. Pointwise minimality of the canonical lift gives
\begin{equation*}
\norm{\eta-e}_{L^2}\leq
\norm{\lift{\eta}+\ell-e}_{L^2}.
\end{equation*}
Minkowski's inequality and volume preservation then imply
\begin{equation*}
\norm{\eta-e}_{L^2}
\leq \int_0^1\norm{\partial_t\lift{\admiss}(t)}_{L^2}\,dt
=\length{\admiss}.
\end{equation*}
Taking the infimum over all such paths yields the lower bound.

Now, let $\eta \in \normalhood{e}{\varepsilon}$ and $u_0 \in \normalball{\varepsilon}$ such that $\eta = \riemexp u_0$. From Theorem \ref{local geodesic convexity} we have that $\geodistance(e, \eta) = \norm{u_0}_{L^2}$. Hence it suffices to prove that there exists an $\varepsilon > 0$ and $C(\varepsilon)>0$ such that
\begin{equation*}
\norm{u_0}_{L^2} \leq C(\varepsilon)\norm{\eta-e}_{L^2}.
\end{equation*}

Letting $\geod(t) = \riemexp tu_0$ with corresponding Eulerian velocity $u(t)$, related by $\dot{\geod}(t) = u(t) \circ \geod(t)$ we have
\begin{align*}
\norm{\eta - e}_{L^2} &= \Bignorm{\int_0^1 \dot{\geod}(t)\,dt}_{L^2} \\
&= \Bignorm{\int_0^1 \big(u_0+\dot\geod(t)-u_0\big)\,dt}_{L^2} \\
&\geq \norm{u_0}_{L^2} - \int_0^1 \norm{\dot\geod(t)-u_0}_{L^2}\,dt.
\end{align*}

Using the Weber formula \eqref{Weber formula prelim}, the fact that $\geod(t)$ is volume-preserving, and that right-composition by $\geod(t)$ is an $L^2$-isometry, we obtain
\begin{equation*}
\begin{aligned}
    \norm{\dot\geod(t)-u_0}_{L^2} &= \norm{ \leray_e \Bigl((D\geod(t)^{-T}u_0)\circ\geod(t)^{-1}\Bigr) - u_0\circ\geod(t)^{-1}}_{L^2} \\
    &\leq\norm{\leray_e\Bigl(((D\geod(t)^{-T}-I)u_0)\circ\geod(t)^{-1}\Bigr)}_{L^2} + \norm{(\Id-\leray_e)(u_0\circ\geod(t)^{-1})}_{L^2}.
\end{aligned}
\end{equation*}

As the map $(t,u_0)\mapsto \riemexp tu_0$ is smooth, with respect to the $H^s$ topologies, from $[0,1]\times \normalball{\varepsilon}$ into $\Dmu^s$, for $\varepsilon>0$ sufficiently small, $\geod(t)$ remains uniformly close to $e$ in $H^s$ for $0\leq t\leq1$. Hence, by Sobolev embedding and the continuity of inversion in the $H^s$-topology, we have that the first term can be bounded by $c_1(\varepsilon)\norm{u_0}_{L^2}$ with $c_1(\varepsilon)\to0$ as $\varepsilon\to0$. For the second term we have the following.

\begin{lemma}\label{gradient projection bound}
For every $\eta\in\Dmu^s(\T^n)$ and $u\in T_e\Dmu^s(\T^n)$ we have
\begin{equation*}
\norm{(\Id-\leray_e)(u\circ\eta^{-1})}_{L^2} \leq C \norm{D\eta -\Id}_{L^\infty}\norm{u}_{L^2},
\end{equation*}
where
\begin{equation*}
\leray_e=\Id-\nabla\Delta^{-1}\div
\end{equation*}
is the Leray projection.
\end{lemma}

\begin{proof}
Let
\begin{equation*}
\gradproj=\Id-\leray_e = \nabla\Delta^{-1}\div.
\end{equation*}
Then
\begin{equation*}
\norm{\gradproj(u\circ\eta^{-1})}_{L^2} \leq C\norm{\div(u\circ\eta^{-1})}_{\dot H^{-1}}
\end{equation*}
for some positive constant $C$ depending only on dimension. Next, let $f\in H^1(\T^n)$ with $\norm{f}_{H^1}=1$. Changing variables by $\eta$ and integrating by parts, we obtain
\begin{align*}
\abs{\ip{\div (u \circ \eta^{-1}), f}_{L^2}} &= \abs{\ip{u \circ \eta^{-1}, \nabla f}_{L^2}} \\
&= \abs{\ip{u , \nabla f \circ \eta}_{L^2}} \\
&= \abs{\ip{u , \nabla f \circ \eta}_{L^2} - \ip{u, \nabla (f \circ \eta)}_{L^2}} \\
&= \abs{\ip{u, \bigl(\Id - D\eta^T\bigr)\nabla f \circ \eta}_{L^2}} \\
& \leq \norm{D\eta - \Id}_{L^\infty} \norm{u}_{L^2}
\end{align*}
which proves the claim.
\end{proof}

Consequently, by Sobolev lemma, the second term can be controlled by $c_2(\varepsilon)\norm{u_0}_{L^2}$ with $c_2(\varepsilon)\to0$ as $\varepsilon\to0$. Hence
\begin{equation*}
\norm{\dot\geod(t)-u_0}_{L^2} \leq c(\varepsilon)\norm{u_0}_{L^2},
\end{equation*}
where $c(\varepsilon) \to 0$ as $\varepsilon \to 0$ and we have
\begin{equation*}
\norm{\eta-e}_{L^2} \geq (1-c(\varepsilon))\norm{u_0}_{L^2}.
\end{equation*}
Choosing $\varepsilon$ so small that $1-c(\varepsilon)>0$, we get
\begin{equation*}
\norm{u_0}_{L^2} \leq C(\varepsilon)\norm{\eta-e}_{L^2}
\end{equation*}
and the theorem follows.
\end{proof}

By right-invariance, the preceding estimate immediately gives a two-point version after potentially shrinking the neighborhood.

\begin{corollary}\label{Phi Lipschitz}
There exists $\delta > 0$ and $C(\delta)>0$ such that for all $\eta,\varphi\in\normalhood{e}{\delta}$ we have
\begin{equation}\label{neighborhood Shnirelman inequality}
\norm{\eta-\varphi}_{L^2} \leq \geodistance(\eta,\varphi) \leq C(\delta)\norm{\eta-\varphi}_{L^2}.
\end{equation}
\end{corollary}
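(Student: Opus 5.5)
The plan is to reduce the two-point statement to Theorem~\ref{local Shnirelman inequality} by right-invariance, with one extra comparison between $\norm{\eta\circ\varphi^{-1}-e}_{L^2}$ and $\norm{\eta-\varphi}_{L^2}$.

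First fix $\varepsilon>0$ as in Theorem~\ref{local Shnirelman inequality}. Using continuity of composition and inversion in the $H^s$ topology, choose $\delta\le\varepsilon$ so that $\eta,\varphi\in\normalhood{e}{\delta}$ implies $\eta\circ\varphi^{-1}\in\normalhood{e}{\varepsilon}$. We may also arrange that $\eta\circ\varphi^{-1}$ lies close to $e$ in $C^0$, via the Sobolev lemma.

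For the lower bound we do not need the neighborhood at all. Lemma~\ref{nondegeneracy of geodesic distance} gives $\bigl(\int\dist^{\T^n}(\eta,\varphi)^2\bigr)^{1/2}\le\geodistance(\eta,\varphi)$. For $\delta$ small the canonical lifts satisfy $\abs{\zeta_\eta(x)-\zeta_\varphi(x)}<\tfrac12$ componentwise. Hence the flat torus distance $\dist^{\T^n}(\eta(x),\varphi(x))$ equals $\abs{\lift\eta(x)-\lift\varphi(x)}$, and integrating gives $\norm{\eta-\varphi}_{L^2}\le\geodistance(\eta,\varphi)$.

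For the upper bound, right-invariance gives $\geodistance(\eta,\varphi)=\geodistance(e,\eta\circ\varphi^{-1})$. Theorem~\ref{local Shnirelman inequality} then bounds this by $C(\varepsilon)\norm{\eta\circ\varphi^{-1}-e}_{L^2}$. The canonical lift of $\eta\circ\varphi^{-1}$ is $\lift\eta\circ\lift\varphi^{-1}$. This holds because that composition is a lift whose displacement is small, and for small $\delta$ it is the one with values in $(-\tfrac12,\tfrac12)^n$. Since $\varphi$ is volume preserving,
\begin{equation*}
\norm{\eta\circ\varphi^{-1}-e}_{L^2}=\norm{\lift\eta\circ\lift\varphi^{-1}-\id}_{L^2}=\norm{\lift\eta-\lift\varphi}_{L^2}=\norm{\eta-\varphi}_{L^2}.
\end{equation*}
Combining these gives $\geodistance(\eta,\varphi)\le C(\varepsilon)\norm{\eta-\varphi}_{L^2}$, and we take $C(\delta):=C(\varepsilon)$.

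The only genuinely delicate step is the lift bookkeeping. We must check that the lift of $\eta\circ\varphi^{-1}$ obtained by composing canonical lifts is indeed its canonical lift, so that no integer shift $\ell$ appears. This should follow from uniform $C^0$ smallness of $\zeta_\eta$ and $\zeta_\varphi$, which is guaranteed by the Sobolev embedding once $\delta$ is small.
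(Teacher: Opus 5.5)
Your proof is correct and follows the paper's argument. You reduce to Theorem~\ref{local Shnirelman inequality} via right-invariance, shrinking $\delta$ so that $\eta\circ\varphi^{-1}\in\normalhood{e}{\varepsilon}$ and so that composing canonical lifts introduces no integer shift, and volume preservation of $\varphi$ then gives $\norm{\eta\circ\varphi^{-1}-e}_{L^2}=\norm{\eta-\varphi}_{L^2}$. The only cosmetic difference is the lower bound: you read it off Lemma~\ref{nondegeneracy of geodesic distance}, using that the flat distance equals $\abs{\zeta_\eta-\zeta_\varphi}$ for small $\delta$, whereas the paper takes both bounds from the theorem after the same identity; both routes are valid.
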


\begin{proof}
Let $\varepsilon>0$ and $C(\varepsilon)>0$ such that for all $\eta\in \normalhood{e}{\varepsilon}$ we have
\begin{equation*}
\norm{\eta - e}_{L^2} \leq \geodistance(e,\eta) \leq C(\varepsilon)\norm{\eta-e}_{L^2}.
\end{equation*}
By continuity of the group operations in the $H^s$ topology there exists $\delta>0$ such that
\begin{equation*}
\eta,\varphi\in\normalhood{e}{\delta} \quad \Longrightarrow \quad \eta\circ\varphi^{-1}\in\normalhood{e}{\varepsilon}.
\end{equation*}
Right-invariance of the $L^2$-geodesic distance then gives
\begin{equation*}
\geodistance(\eta,\varphi) = \geodistance(e,\eta\circ\varphi^{-1}).
\end{equation*}
Further, after shrinking $\delta$ so that all lifts are compatible, volume
preservation of $\varphi$ gives
\begin{equation*}
\norm{\eta\circ\varphi^{-1}-e}_{L^2} = \norm{\eta-\varphi}_{L^2}.
\end{equation*}
The result then follows from Theorem~\ref{local Shnirelman inequality}.
\end{proof}

\subsection{Compactness of closed normal neighborhoods}\label{compactness}
We prove that the images of sufficiently small $H^s$-closed balls around $0$ under the $L^2$-exponential map are compact with respect to the $L^2$-geodesic distance.

Throughout we will assume that $\varepsilon>0$ and $ \varepsilon'>0$ are such that the $L^2$-exponential map restricted to $\normalball{\varepsilon}$ and $\normalball[s']{\varepsilon'}$ is a diffeomorphism of the $H^s$- and $H^{s'}$-topologies respectively. We further assume that $\varepsilon>0$ is chosen sufficiently small so that $\normalball{\varepsilon} \subset \normalball[s']{\tfrac{\varepsilon'}{2}}$ and that
\begin{equation*}
    \closedhood{e}{\varepsilon} = \riemexp \closedball[s]{\varepsilon}
\end{equation*}
is contained in a slightly larger normal neighborhood on which both Theorem \ref{local geodesic convexity} and Corollary \ref{Phi Lipschitz} apply.

\begin{proposition}\label{compactness of closed normal neighborhood}
The set $\closedhood[s]{e}{\varepsilon}$ is compact with respect to the metric $\geodistance$. Moreover, $\closedhood[s]{e}{\varepsilon}$ is the $H^s$-closure of $\normalhood[s]{e}{\varepsilon}$.
\end{proposition}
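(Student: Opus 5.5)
Since $\geodistance$ is a metric on the path component (Lemma~\ref{nondegeneracy of geodesic distance}), compactness is equivalent to sequential compactness. So I would take an arbitrary sequence $\eta_k\in\closedhood{e}{\varepsilon}$ and write $\eta_k=\riemexp u_k$ with $\norm{u_k}_{H^s}\leq\varepsilon$. The plan is to use the compact embedding $H^s\hookrightarrow H^{s'}$ on $\T^n$, which holds for $s'<s$. Passing to a subsequence, $u_k\rightharpoonup u$ weakly in $H^s$ and $u_k\to u$ strongly in $H^{s'}$. Weak lower semicontinuity of the norm gives $\norm{u}_{H^s}\leq\varepsilon$, so $u\in\closedball{\varepsilon}$ and $\eta:=\riemexp u\in\closedhood{e}{\varepsilon}$.

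Next I need convergence of $\eta_k\to\eta$ in the $L^2$-geodesic distance. We have $\normalball{\varepsilon}\subset\normalball[s']{\varepsilon'/2}$, and $\riemexp$ is smooth, in particular continuous, in the $H^{s'}$ topology on $\normalball[s']{\varepsilon'}$. Hence $\eta_k\to\eta$ in $H^{s'}$, and therefore also in $L^2$ after fixing the canonical lifts. This step uses that the exponential map defined on $H^s$ data agrees with the $H^{s'}$ one, which follows from uniqueness of solutions to \eqref{euler in lagrangian}. To pass to the geodesic distance I would invoke Corollary~\ref{Phi Lipschitz}, valid on the slightly larger normal neighborhood containing $\closedhood{e}{\varepsilon}$. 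It gives $\geodistance(\eta_k,\eta)\leq C\norm{\eta_k-\eta}_{L^2}\to0$.

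The main obstacle is bookkeeping about topologies. Corollary~\ref{Phi Lipschitz} is stated for points of an $H^s$ normal neighborhood, and the limit $\eta$ does lie in $\closedhood{e}{\varepsilon}$, so the corollary applies. The convergence itself, however, happens only in $H^{s'}$, so I must check that the corollary's constant and the choice of lifts do not require $H^s$ convergence. They do not: the estimate is pointwise in the pair $(\eta_k,\eta)$. If one prefers to avoid this point, Theorem~\ref{local geodesic convexity} offers an alternative route. One can bound $\geodistance(\eta_k,\eta)=\norm{\riemexp^{-1}(\eta_k\circ\eta^{-1})}_{L^2}$ and use $H^{s'}$-continuity of composition, inversion and $\riemexp^{-1}$ on the $H^{s'}$ normal ball, running the whole argument with $s'$ in place of $s$.

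For the second claim, first note that $\riemexp$ is an $H^s$-homeomorphism from $\normalball{r}$ onto its image for some $r>\varepsilon$. The $H^s$-closure of $\normalball{\varepsilon}$ inside $\normalball{r}$ is $\closedball{\varepsilon}$, since the norm ball's closure is the closed ball. Homeomorphisms preserve closures, so the closure of $\normalhood{e}{\varepsilon}$ relative to $\normalhood{e}{r}$ is $\closedhood{e}{\varepsilon}$. It remains to see that no point outside $\normalhood{e}{r}$ is an $H^s$-limit of points of $\normalhood{e}{\varepsilon}$. This holds because $\closedhood{e}{\varepsilon}$ is closed in $\Dmu^s$. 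Indeed, it is sequentially compact in $\geodistance$, hence closed in $\geodistance$. On the other hand, $H^s$ convergence implies convergence in $\geodistance$: the straight-line-type curves, or Corollary~\ref{Phi Lipschitz} applied after right translation, give $\geodistance\lesssim\norm{\cdot}_{H^s}$ locally. So any $H^s$-limit of points of $\closedhood{e}{\varepsilon}$ is also a $\geodistance$-limit, and therefore lies in $\closedhood{e}{\varepsilon}$.
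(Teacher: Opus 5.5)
Your proposal is correct, and for the compactness claim it follows the paper's proof essentially step for step: weak $H^s$ compactness, the compact embedding into $H^{s'}$, agreement and $H^{s'}$-continuity of $\riemexp$, then Corollary~\ref{Phi Lipschitz}. For the closure claim the paper is slightly more economical. It identifies the $H^s$-limit with a $\geodistance$-subsequential limit in $\closedhood{e}{\varepsilon}$ using only the elementary lower bound $\norm{\cdot}_{L^2}\leq\geodistance$. You instead show that $H^s$-convergence forces $\geodistance$-convergence, via right translation by the inverse of the limit point and local minimality near $e$; this is also valid, but drop the aside about ``straight-line-type curves'', since these are not volume-preserving and hence not admissible.
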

\begin{proof}
Let $\{ \xi_j \} = \{ \riemexp u_j \} \subset \closedhood{e}{\varepsilon}$ with $\{ u_j \} \subset \closedball{\varepsilon}$. As $\{u_j\}$ is a bounded sequence in $H^s$, by weak compactness, after passing to a subsequence it converges weakly in $H^s$ to some $u\in T_e\Dmu^s$, with the divergence-free condition passing to the weak limit. The compact embedding $H^s\hookrightarrow H^{s'}$ gives a further subsequence which converges strongly in $H^{s'}$. The weak and strong limits agree because both subsequences converge to the same distribution. Weak lower semicontinuity gives
\begin{equation*}
\norm{u}_{H^s}\leq\varepsilon.
\end{equation*}
The $H^s$ and $H^{s'}$ Euler flows agree for initial velocities in $T_e\Dmu^s$, by uniqueness of solutions in the weaker class. The $H^{s'}$-continuity of the Riemannian exponential map therefore gives
\begin{equation*}
\xi_j=\riemexp u_j\rightarrow\riemexp u=\xi\quad\text{in }H^{s'}
\end{equation*}
and hence in $L^2$. Corollary~\ref{Phi Lipschitz} then gives
\begin{equation*}
\geodistance(\xi_j,\xi)\leq C\norm{\xi_j-\xi}_{L^2}\rightarrow0.
\end{equation*}

For the second assertion, let $u\in\closedball[s]{\varepsilon}$. Then $(1-\tfrac{1}{j})u\in\normalball[s]{\varepsilon}$ and
\begin{equation*}
\riemexp\big((1-\tfrac{1}{j})u\big)\rightarrow\riemexp u\quad\text{in }H^s
\end{equation*}
and hence $\closedhood{e}{\varepsilon}$ is contained in the $H^s$-closure of $\normalhood{e}{\varepsilon}$. For the reverse inclusion, suppose that $\{\xi_j\} \subset \normalhood{e}{\varepsilon}$ converges to some $\xi$ in the $H^s$ topology. As above, there exists a subsequence which converges to some limit in $\closedhood{e}{\varepsilon}$ with respect to the geodesic distance, and hence in the $L^2$ distance. As convergence in $H^s$ implies convergence in $L^2$, these limits agree, so $\xi \in \closedhood{e}{\varepsilon}.$
\end{proof}

\subsection{The second fundamental form and the distance comparison}
We end this section by explaining why the local comparison
\begin{equation*}
\geodistance(\eta,\varphi) \simeq \norm{\eta-\varphi}_{L^2}
\end{equation*}
is not a consequence of a bounded extrinsic curvature estimate. In finite dimensions, bounded second fundamental form does force local chord-arc estimates. We record the elementary finite-dimensional model.

\begin{lemma}\label{chord arc estimate}
Let $\Sigma \subset \R^N$ be a smooth embedded submanifold with the induced metric and exponential map $\exp^\Sigma$. Let $p \in \Sigma$. If there exists a normal neighborhood $\U(p, \varepsilon)$ around $p$ and a constant $\kappa>0$ such that
\begin{equation}\label{kappa bound}
\kappa\,\dist^\Sigma(p,q) < \pi \quad \text{ for all } q \in \U(p, \varepsilon)
\end{equation}
and
\begin{equation}\label{II quadratic bound}
\abs{II_{\geod(t)}(\dot{\geod}(t),\dot{\geod}(t))} \leq \kappa \abs{\dot{\geod}(t)}^2 \quad \text{ for all } v \in B(0, \varepsilon),\ t \in [0,1]
\end{equation}
where $\geod(t) = \exp^\Sigma_p(tv)$ and $II$ is the second fundamental form for $\Sigma \subset \R^N$, then there exists a constant $C>0$ such that
\begin{equation}\label{finite dimensional local ratio}
\dist^\Sigma(p,q) \leq C \abs{p-q}  \quad \text{ for all } q \in \U(p, \varepsilon)
\end{equation}
where $\dist^\Sigma$ is the induced distance on $\Sigma \subset \R^N$.
\end{lemma}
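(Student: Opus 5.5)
Fix $q=\exp^\Sigma_p(v)\in\U(p,\varepsilon)$ with $v\in B(0,\varepsilon)$, and let $\geod(t)=\exp^\Sigma_p(tv)$, $t\in[0,1]$. The plan is to compare $q-p=\int_0^1\dot\geod(t)\,dt$ with $\dot\geod(0)$, in the spirit of the proof of Theorem~\ref{local Shnirelman inequality}. In finite dimensions the normal neighborhood is a genuine metric ball, so the Gauss lemma gives
\begin{equation*}
\dist^\Sigma(p,q)=\length{\geod}=\abs{v}=L.
\end{equation*}
Since $\geod$ is a geodesic of $\Sigma$, its ambient acceleration is purely normal, $\ddot\geod(t)=II_{\geod(t)}(\dot\geod(t),\dot\geod(t))$. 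By \eqref{II quadratic bound} and constant speed $\abs{\dot\geod(t)}=L$, this gives $\abs{\ddot\geod(t)}\leq\kappa L^2$. In other words, $\geod$ is a curve in $\R^N$ with unit-speed curvature at most $\kappa$.

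The key step is a chord-arc bound for curves of bounded curvature. Let $T(\tau)$ be the unit tangent in arclength $\tau\in[0,L]$, so $\abs{T'}\leq\kappa$ and $T$ is a curve on the unit sphere of spherical length at most $\kappa\tau$. Hence the angle satisfies $\angle(T(\tau),T(0))\leq\kappa\tau$. I would measure the angle from the midpoint $\tau_0=L/2$ rather than from $0$. Then every angle is at most $\kappa L/2<\pi/2$ by \eqref{kappa bound}, so $T(\tau)\cdot T(\tau_0)\geq\cos(\kappa\abs{\tau-\tau_0})>0$. Projecting onto $T(\tau_0)$ gives
\begin{equation*}
\abs{q-p}\geq\int_0^L\cos\bigl(\kappa\abs{\tau-\tfrac{L}{2}}\bigr)\,d\tau=\frac{2}{\kappa}\sin\Bigl(\frac{\kappa L}{2}\Bigr).
\end{equation*}
Since $\kappa L/2\in[0,\pi/2)$, concavity of $\sin$ yields $\sin x\geq\frac{2}{\pi}x$ there. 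Therefore $\abs{q-p}\geq\frac{2}{\pi}L$, which is \eqref{finite dimensional local ratio} with $C=\pi/2$.

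The main obstacle is justifying the angle comparison $\angle(T(\tau),T(\tau_0))\leq\kappa\abs{\tau-\tau_0}$. It follows because spherical distance is bounded by the length of the path $T$ on $S^{N-1}$. Equivalently, one can use $\frac{d}{d\tau}\arccos\bigl(T(\tau)\cdot T(\tau_0)\bigr)\leq\abs{T'}$ away from the degenerate points, together with a continuity argument. One must also handle the strict inequality in \eqref{kappa bound}, needed so that $\cos$ stays positive, and the case $q=p$, which is trivial.
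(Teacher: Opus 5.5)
Your proposal is correct and follows essentially the same argument as the paper: the constant-speed radial geodesic satisfies $\abs{\ddot\geod}=\abs{II(\dot\geod,\dot\geod)}\leq\kappa L^2$, the unit tangent is compared with the midpoint tangent so that all angles stay below $\kappa L/2<\pi/2$, and projecting onto that direction gives $\abs{p-q}\geq\frac{2}{\kappa}\sin(\kappa L/2)$. The only differences are cosmetic: you parametrize by arclength and use $\sin x\geq\frac{2}{\pi}x$ to get $C=\pi/2$, whereas the paper uses $\sin x\geq\frac{x}{2}$ and obtains $C=2$.
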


\enlargethispage{2\baselineskip}

\begin{proof}
Let $q \in \U(p, \varepsilon) \setminus \{p\}$ with $\geod : [0,1] \rightarrow \U(p, \varepsilon)$ the corresponding radial geodesic with $\geod(0) = p$, $\geod(1) = q$ and $\abs{\dot{\geod}(t)} = \dist^\Sigma(p,q)$ for all $t \in [0,1]$. By assumption, we have that
\begin{equation}
\abs{\ddot{\geod}(t)} = \abs{II_{\geod(t)}(\dot{\geod}(t),\dot{\geod}(t))} \leq \kappa \abs{\dot{\geod}(t)}^2 = \kappa \dist^\Sigma(p,q)^2 \quad \text{ for all } t \in [0,1].
\end{equation}
Next define the unit tangent $X : [0,1] \rightarrow T\Sigma \ ; \ X(t) = \frac{\dot{\geod}(t)}{\dist^\Sigma(p,q)}$. So
\begin{equation*}
\abs{X(t)} = 1, \quad \abs{\dot{X}(t)} \leq \kappa \dist^\Sigma(p,q) \quad \text{ for all } t\in[0,1].
\end{equation*}
In particular $X(t)$ evolves in the unit sphere of each tangent space $T_{\geod(t)}\Sigma$. Using \eqref{kappa bound} and \eqref{II quadratic bound}, the angle between $X(t)$ and $X(\tfrac{1}{2})$ (after identifying tangent spaces in $\R^N$) can be bounded as
\begin{equation}\label{angle bound}
0 \leq \theta(X(t), X(\tfrac{1}{2})) \leq \biggl|\int_{\tfrac{1}{2}}^t \abs{\dot{X}(r)}\,dr\biggr| \leq \kappa \dist^\Sigma(p,q) \abs{t -\tfrac{1}{2}} < \frac{\pi}{2}.
\end{equation}
Hence we have
\begin{equation*}
q-p = \int_0^1 \dot{\geod}(t)\,dt = \dist^\Sigma(p,q) \int_0^1 X(t)\,dt
\end{equation*}
and we can estimate
\begin{align*}
\abs{p-q} &= \dist^\Sigma(p,q) \biggl\vert \int_0^1 X(t)\,dt \biggr\vert \\
&\geq \dist^\Sigma(p,q) \bigip{\int_0^1X(t)\,dt, X(\tfrac{1}{2})} \\
& = \dist^\Sigma(p,q) \int_0^1 \bigip{X(t), X(\tfrac{1}{2})}\,dt \\
&\geq \dist^\Sigma(p,q) \int_0^1 \cos(\theta(X(t), X(\tfrac{1}{2})))\,dt.
\end{align*}
As cosine is decreasing on $[0,\tfrac{\pi}{2}]$, we now, from \eqref{angle bound}, have
\begin{equation*}
\abs{p-q} \geq \dist^\Sigma(p,q) \int_0^1 \cos(\kappa \dist^\Sigma(p,q) \abs{t -\tfrac{1}{2}})\,dt = \frac{2}{\kappa} \sin\biggl(\frac{\kappa \dist^\Sigma(p,q)}{2}\biggr).
\end{equation*}
As $\sin(x) \geq \frac{x}{2}$ on $[0 , \tfrac{\pi}{2}]$, we have
\begin{equation*}
\abs{p-q} \geq \frac{\dist^\Sigma(p,q)}{2} \quad \text{for all } q \in \U(p, \varepsilon).
\end{equation*}
\end{proof}

In the hydrodynamical setting, the pressure term is the formal analogue of the normal acceleration in the geodesic equation. Namely, the Lagrangian form of the Euler equation is
\begin{equation*}
\ddot\geod(t)=-\nabla p(t)\circ\geod(t).
\end{equation*}
Hence $-\nabla p\circ\geod$ plays the role of the second fundamental form in this $L^2$ setting. Since $\Dmu^s$ is not an $L^2$ Hilbert submanifold of the ambient space, we use this language only as an analogy and formulate the next statement directly in terms of the pressure-gradient quadratic map.

\begin{proposition}\label{local ratio unbounded}
Let $\varepsilon>0$. For all $\kappa>0$, there exists $u \in \normalball{\varepsilon} \setminus \{0\} \subset T_e\Dmu^s(\T^n)$ such that, for $\geod(t) = \riemexp tu$ we have
\begin{equation}\label{Euler II unbounded}
\norm{\nabla p(1)\circ\geod(1)}_{L^2}
\geq \kappa \norm{\dot{\geod}(1)}_{L^2}^2
\end{equation}
where $p$ is the pressure in \eqref{euler}. Equivalently, the pressure-gradient
quadratic map is not locally bounded from $L^2\times L^2$ to $L^2$.
\end{proposition}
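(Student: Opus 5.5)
The plan is to build the counterexample by scaling, working entirely at time $t=1$ through the Eulerian quantities. By volume preservation,
\begin{equation*}
\norm{\nabla p(1)\circ\geod(1)}_{L^2}=\norm{\nabla p(1)}_{L^2}, \qquad \norm{\dot\geod(1)}_{L^2}=\norm{u(1)}_{L^2}=\norm{u}_{L^2}
\end{equation*}
by energy conservation. So it suffices to find $u\in\normalball{\varepsilon}\setminus\{0\}$ with $\norm{\nabla p(1)}_{L^2}\geq\kappa\norm{u}_{L^2}^2$. The idea is to take a fixed smooth divergence-free shape and concentrate it at high frequency, and then shrink its amplitude so that it fits inside the $H^s$ ball. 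The ratio $\norm{\nabla p}_{L^2}/\norm{u}_{L^2}^2$ is invariant under amplitude scaling to leading order and grows like the frequency $N$, because $-\Delta p=\mathrm{tr}(\nabla u)^2$ gives $\nabla p\sim N\cdot(\text{amplitude})^2$.

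Concretely, fix a smooth divergence-free $w$ on $\T^n$ whose pressure is nonzero, i.e.\ $\mathrm{tr}(\nabla w)^2\not\equiv0$, so that $\nabla p_w:=-\nabla\Delta^{-1}\mathrm{tr}(\nabla w)^2\neq0$. For example, take a cellular flow in the first two coordinates, $w=(\sin x_1\cos x_2,-\cos x_1\sin x_2,0,\dots)$ with $2\pi$-periodic normalization. Set $w_N(x)=w(Nx)$ and $u=a\,w_N$, with amplitude $a=a(N)=\tfrac{\varepsilon}{2}\norm{w_N}_{H^s}^{-1}\sim \varepsilon N^{-s}$, so that $u\in\normalball{\varepsilon}$. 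At time $0$ the pressure is $p_0=a^2p_w(N\cdot)$, hence
\begin{equation*}
\norm{\nabla p_0}_{L^2}=a^2N\norm{\nabla p_w}_{L^2}, \qquad \norm{u}_{L^2}^2=a^2\norm{w}_{L^2}^2,
\end{equation*}
and the ratio at $t=0$ equals $cN$ with $c>0$ independent of $a$.

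The main step, and the expected obstacle, is transferring this from $t=0$ to $t=1$ while the flow evolves. I would use the scaling symmetry of Euler: if $u(t,x)$ solves with data $a w_N$, then $U(\tau,y)=a^{-1}N^{-1}\cdot N\,u(\tau/(aN),y/N)$, suitably normalized, solves Euler on $\T^n$ with data $w$ up to rescaled time. Precisely, $u(t,x)=aV(aNt,Nx)$ where $V$ solves Euler with $V(0)=w$. Thus time $t=1$ corresponds to $\tau=aN\sim\varepsilon N^{1-s}\to0$ as $N\to\infty$, since $s>1$. By continuity of the $H^s$ Euler flow for the fixed datum $w$, $\nabla p_V(\tau)\to\nabla p_w$ in $L^2$ as $\tau\to0$. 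Therefore
\begin{equation*}
\frac{\norm{\nabla p(1)}_{L^2}}{\norm{u}_{L^2}^2}=N\,\frac{\norm{\nabla p_V(aN)}_{L^2}}{\norm{w}_{L^2}^2}\geq \frac{N}{2}\frac{\norm{\nabla p_w}_{L^2}}{\norm{w}_{L^2}^2}
\end{equation*}
for $N$ large, which exceeds $\kappa$. The points to verify carefully are that the existence time of $V$ is independent of $N$ (it is, since $V$ has fixed data) and that $u\in\normalball{\varepsilon}$ lies in the domain of $\riemexp$ with $\geod$ defined on $[0,1]$ (guaranteed by the choice of $\varepsilon$). This yields \eqref{Euler II unbounded}. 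The equivalent reformulation follows because $(u,u)\mapsto\nabla p$ is a quadratic map, and the inequality shows that its norm on $L^2$-unit inputs restricted to an arbitrarily small $H^s$ ball is unbounded.
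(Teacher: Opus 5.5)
Your argument is correct. It rests on the same mechanism as the paper: a high-frequency cellular flow whose amplitude is tuned so that its $H^s$ norm is $\varepsilon/2$, which makes the ratio $\|\nabla p\|_{L^2}/\|u\|_{L^2}^2$ grow linearly in the frequency. Where you differ is in the passage from $t=0$ to $t=1$.

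\begin{itemize}
\item The paper takes $u_k=\alpha_k\nabla^\perp\psi_k$ with $\psi_k$ a Laplace eigenfunction. Then $u_k$ is a steady Euler flow and the pressure is time-independent. The ratio is computed exactly as $2\pi k$, with no evolution to control.
\item You instead use the scaling symmetry $u(t,x)=aV(aNt,Nx)$, uniqueness, and $L^2$-continuity in time of $\nabla P$ for the fixed datum $w$, together with $aN\sim\varepsilon N^{1-s}\to 0$.
\end{itemize}

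Your particular $w$ is $-\nabla^\perp(\sin x_1\sin x_2)$, whose stream function is a Laplace eigenfunction, so it is itself a steady state. For that choice $V(\tau)=w$ for all $\tau$, your continuity step is vacuous, and your proof reduces to the paper's.

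What your route buys is generality: it gives the blow-up of the ratio for any smooth divergence-free profile with nonvanishing pressure, not only for steady states. The paper's route gives an explicit constant. It also avoids any appeal to local well-posedness or to the existence of $\gamma$ on $[0,1]$, since steady flows exist globally.

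A minor point: on $\mathbb{T}^n=\mathbb{R}^n/\mathbb{Z}^n$ you should use $\sin(2\pi x_1)$ and so on, and take $N$ to be an integer. This ensures that $x\mapsto Nx$ preserves periodicity and $L^2$ norms.
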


\begin{remark}
In particular, the pressure-gradient map is not bounded in the $L^2$ topology in the manner required by Lemma~\ref{chord arc estimate}. This result does not indicate a pathology in the $H^s$ manifold structure. The $L^2$ Riemannian exponential map is smooth in the $H^s$ topology, and the pressure-gradient quadratic map satisfies the corresponding $H^s$ estimates on bounded sets. The unboundedness occurs only when both inputs and the output are measured in $L^2$.
\end{remark}

\begin{proof}
    We construct the sequence explicitly on the two-dimensional torus $\T^2$. The $n$-dimensional case follows by trivial extension. We begin with the Laplace eigenfunctions
    \begin{equation*}
    \psi_k(x,y) = -\frac{1}{2 \pi k}\sin(2 \pi kx)\sin(2 \pi ky), \qquad -\Delta\psi_k = 8 \pi^2 k^2\psi_k, \qquad k\in\N,
    \end{equation*}
    whose skew-gradients are stationary Euler fields. Setting $\alpha_k= \frac{\varepsilon}{\sqrt{2(1 + (8\pi^2k^2)^s)}}$ we define
    \begin{equation*}
    u_k(x,y) = \alpha_k \symp \psi_k(x,y) = \alpha_k\Big( \sin(2 \pi kx)\cos(2 \pi ky)\,\partial_x -\sin(2 \pi ky)\cos(2 \pi kx)\,\partial_y \Big),
    \end{equation*}
    whose $L^2$ and $H^s$ norms are given by
    \begin{equation*}
    \norm{u_k}_{L^2} =\frac{\alpha_k}{\sqrt2} =\frac{\varepsilon}{2\sqrt{1 + (8\pi^2k^2)^s}} \quad \text{and} \quad  \norm{u_k}_{H^s} =\sqrt{1 + (8\pi^2k^2)^s}\,\norm{u_k}_{L^2} =\frac{\varepsilon}{2}.
    \end{equation*}
    In particular, $u_k\in \normalball{\varepsilon}$ for all $k \in \N$. Applying the covariant derivative, we have
    \begin{equation*}
    \nabla u_k(x,y)
     =\alpha_k 2 \pi k
     \begin{bmatrix}
     \cos(2 \pi kx)\cos(2 \pi ky) & -\sin(2 \pi kx)\sin(2 \pi ky)\\
     \sin(2 \pi kx)\sin(2 \pi ky) & -\cos(2 \pi kx)\cos(2 \pi ky)
     \end{bmatrix}.
    \end{equation*}
    A direct calculation gives the pressure gradient
    \begin{equation*}
    \nabla p_k =-\nabla_{u_k}u_k =-\alpha_k^2 2 \pi k\Big( \sin(2 \pi kx)\cos(2 \pi kx)\,\partial_x +\sin(2 \pi ky)\cos(2 \pi ky)\,\partial_y \Big),
    \end{equation*}
    and hence
    \begin{equation*}
    \norm{\nabla p_k}_{L^2} = \alpha_k^2 \pi k =\frac{\varepsilon^2 \pi k}{2\big(1+(8 \pi^2 k^2)^s\big)}.
    \end{equation*}
    Therefore
    \begin{equation*}
    \frac{\norm{\nabla p_k}_{L^2}}{\norm{u_k}_{L^2}^2}=2 \pi k \rightarrow\infty
    \end{equation*}
    and the proposition follows.
\end{proof}

\begin{remark}
Hence the local comparison of intrinsic and ambient $L^2$ distances in Theorem~\ref{local Shnirelman inequality} is not explained by a naive finite-dimensional bounded-second-fundamental-form argument. The pressure term is unbounded as a quadratic map in $L^2$. The local comparison made here instead relies on the coadjoint conservation law.
\end{remark}

\section{Metric betweenness and the hydrodynamical Hopf-Rinow theorem}\label{Hopf-Rinow}

A central question in geometric hydrodynamics is the two-point boundary-value problem: given two attainable fluid configurations (defined below), does there exist an Euler flow connecting them? As mentioned in the introduction, we distinguish two versions of this problem. The weak version asks only for an $L^2$ geodesic with the prescribed endpoints, or equivalently for surjectivity of the $L^2$-Riemannian exponential map onto the attainable component, while the strong version asks for an $L^2$-length-minimizing geodesic. The construction of the smooth exponential map, together with the inverse function theorem, gives a local solution to the weak problem, while Theorem \ref{local geodesic convexity} establishes the corresponding solution of the strong problem in a sufficiently small normal neighborhood. In this section we formulate, for the two-dimensional setting, a criterion for the strong problem in terms of Menger's notion of metric betweenness \cite{menger1928untersuchungen}.

As outlined in the preliminaries, we say that $\eta \in \Dmu^s$ is attainable from the identity if there exists an admissible curve connecting $e$ to $\eta$ or, equivalently, if $\geodistance(e, \eta) < \infty$. For an attainable $\eta$ we define its corresponding excess functional, which, for $\xi \in \Dmu^s$ attainable from $e$, is given by
\begin{equation}\label{eta excess}
    E_\eta(\xi) = \geodistance(e, \xi) + \geodistance(\xi, \eta) - \geodistance(e, \eta).
\end{equation}
Notice that the triangle inequality gives $E_\eta(\xi) \geq 0$. We say that $\xi$ is metrically between $e$ and $\eta$ if $E_\eta(\xi) = 0$, i.e., if
\begin{equation*}
    \geodistance(e, \eta) = \geodistance(e, \xi) + \geodistance(\xi, \eta)
\end{equation*}
and further refer to such $\xi$ as being nontrivial if it is distinct from both $e$ and $\eta$.

Our central result of this section is the following.

\begin{theorem}[Metric betweenness criterion]\label{Hopf-Rinow criteria}
    Let $s>2$ and $\varepsilon>0$ sufficiently small so that Theorem \ref{local geodesic convexity} applies on a $H^s$ ball of radius larger than $\varepsilon$. The following are equivalent.
    \begin{enumerate}
        \item[(i)] For all $\eta \in \Dmu^s(\T^2) \setminus \closedhood{e}{\varepsilon}$ attainable from the identity, there exists $\xi \in \closedhood{e}{\varepsilon} \setminus \{e\}$ which is metrically between $e$ and $\eta$. \\
        \item[(ii)] For all $\eta \in \Dmu^s(\T^2) \setminus \closedhood{e}{\varepsilon}$  attainable from the identity, there exists an $L^2$-length-minimizing geodesic connecting $e$ and $\eta$.
    \end{enumerate}
\end{theorem}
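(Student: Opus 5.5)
The plan is to prove the two implications separately. The direction (ii)$\Rightarrow$(i) follows by restricting a minimizing geodesic to a short initial segment. The direction (i)$\Rightarrow$(ii) is a continuity argument along a single Euler flow, in the spirit of the classical Hopf--Rinow proof, with compactness of metric spheres replaced by hypothesis (i) and the uniqueness statements of Theorems~\ref{local geodesic convexity} and \ref{geodesic convexity of normal neighborhoods}. The restriction to $\T^2$ is used only through global well-posedness of the $H^s$ Euler equations in two dimensions, $s>2$, which makes $t\mapsto\riemexp(tu_0)$ defined and $H^s$-continuous for all $t\geq 0$.

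\emph{(ii)$\Rightarrow$(i).} Let $\geod(t)=\riemexp(tu_0)$, $t\in[0,1]$, be a length-minimizing geodesic from $e$ to $\eta\notin\closedhood{e}{\varepsilon}$, so that $\geodistance(e,\eta)=\norm{u_0}_{L^2}>0$.
\begin{itemize}
\item For small $t_0>0$ we have $t_0u_0\in\normalball{\varepsilon}$, so $\xi=\geod(t_0)\in\normalhood{e}{\varepsilon}\subset\closedhood{e}{\varepsilon}$.
\item Since $\geod$ minimizes, its pieces $\geod|_{[0,t_0]}$ and $\geod|_{[t_0,1]}$ also minimize, and their lengths add. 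Hence $E_\eta(\xi)=0$.
\item Finally $\xi\neq e$ by Lemma~\ref{nondegeneracy of geodesic distance}, since $\geodistance(e,\xi)=t_0\norm{u_0}_{L^2}>0$.
\end{itemize}

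\emph{(i)$\Rightarrow$(ii), setup.} Fix an attainable $\eta\notin\closedhood{e}{\varepsilon}$ and set $d=\geodistance(e,\eta)$.
\begin{itemize}
\item By (i) pick $\xi_1\in\closedhood{e}{\varepsilon}\setminus\{e\}$ between $e$ and $\eta$, and write $\xi_1=\riemexp v$.
\item By Theorem~\ref{local geodesic convexity}, applied on the larger ball, the radial geodesic to $\xi_1$ is the unique minimizer from $e$ to $\xi_1$.
\item Let $\geod(t)=\riemexp(tu_0)$ with $u_0=v/\norm{v}_{L^2}$. This is a unit-speed Euler flow defined for all $t\geq0$ by 2D global existence.
\item Define
\begin{equation*}
T=\bigl\{t\in[0,d]:\ \geodistance(\geod(t),\eta)=d-t\bigr\}.
\end{equation*}
For $t\in T$ the curve $\geod|_{[0,t]}$ is minimizing, since $\geodistance(e,\geod(t))\leq t$ and the triangle inequality gives equality.
\item We have $[0,\norm{v}_{L^2}]\subset T$. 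The set $T$ is closed: $t\mapsto\geod(t)$ is $H^s$-continuous, hence $L^2$-continuous, and $\geodistance$ is locally Lipschitz with respect to $L^2$ by Corollary~\ref{Phi Lipschitz} and right-invariance.
\end{itemize}
It then suffices to show that $t_0=\max T$ equals $d$, because then $\geod|_{[0,d]}$ is a minimizing geodesic from $e$ to $\eta$.

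\emph{(i)$\Rightarrow$(ii), openness.} Suppose $t_0<d$ and set $\varphi=\geod(t_0)$, so that $\geodistance(\varphi,\eta)=d-t_0>0$.
\begin{itemize}
\item If $\eta\circ\varphi^{-1}\in\closedhood{e}{\varepsilon}$, Theorem~\ref{local geodesic convexity} with right-translation gives a minimizing geodesic from $\varphi$ to $\eta$.
\item Otherwise, apply (i) to $\eta\circ\varphi^{-1}$ and right-translate. This yields $\xi\neq\varphi$ with $\xi\circ\varphi^{-1}\in\closedhood{e}{\varepsilon}$ and $\geodistance(\varphi,\xi)+\geodistance(\xi,\eta)=d-t_0$. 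The radial geodesic $\beta$ from $\varphi$ to $\xi$ is the unique minimizer between them.
\item In either case, the concatenation of $\geod|_{[0,t_0]}$ and $\beta$ is a curve from $e$ whose length equals $t_0+\geodistance(\varphi,\xi)$. Its endpoint still satisfies the betweenness identity, so the concatenation is length-minimizing.
\end{itemize}

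\emph{(i)$\Rightarrow$(ii), removing the corner.} This is the step I expect to be the main obstacle. Choose a small normal neighborhood of $\varphi$ on which Theorem~\ref{geodesic convexity of normal neighborhoods} holds, transported by right-translation by $\varphi$.
\begin{itemize}
\item By $H^s$-continuity of $\geod$ and of $\beta$, the points $\geod(t_0-\delta)$ and $\beta(\delta')$ lie in this neighborhood for small $\delta,\delta'>0$.
\item The broken piece of the concatenation between these two points is a sub-arc of a minimizer, hence length-minimizing.
\item By the uniqueness in Theorem~\ref{geodesic convexity of normal neighborhoods}, this broken piece coincides up to reparameterization with the smooth geodesic joining the two points.
\item Consequently $\dot\beta(0)=\dot\geod(t_0)$, and uniqueness for the Euler Cauchy problem \eqref{euler in lagrangian} forces $\beta(t)=\geod(t_0+t)$ for small $t\geq 0$.
\item Therefore $t_0+\delta''\in T$ for some $\delta''>0$, contradicting maximality. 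Hence $t_0=d$.
\end{itemize}
The delicate points are checking that the uniqueness of minimizers is genuinely among all admissible curves, including the piecewise $C^1$ broken one, which Theorem~\ref{geodesic convexity of normal neighborhoods} provides, and that the relevant points lie in the $H^s$ neighborhood rather than merely being $L^2$-close to $\varphi$.
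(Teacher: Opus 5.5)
Your proposal is correct and follows essentially the same route as the paper. For (i)$\Rightarrow$(ii) it runs a continuation argument along the unit-speed global Euler flow through the point supplied by (i), re-applies (i) to $\eta\circ\varphi^{-1}$ at $\varphi=\geod(t_0)$, and removes the corner via the uniqueness in Theorem~\ref{geodesic convexity of normal neighborhoods}; for (ii)$\Rightarrow$(i) it takes a short initial segment of the minimizer. The only minor difference is that you prove closedness of $T$ via Corollary~\ref{Phi Lipschitz} and right-invariance, whereas the paper uses the simpler unit-speed bound $\geodistance(\geod(t),\geod(t'))\leq\abs{t-t'}$.
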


\begin{remark}
    When paired with Theorem \ref{local geodesic convexity}, the above gives an equivalent criterion for establishing or disproving the strong hydrodynamical Hopf-Rinow Theorem in the two-dimensional case.

    A related metric construction appears in \cite[Theorem~2.4.16]{burago2001course}: a complete metric space in which every pair of points has a midpoint is a geodesic space. However, the proof uses metric completeness which is not available in our setting. Here the metrically between point need not be a midpoint, and the argument instead uses geodesic convexity and completeness of the two-dimensional Euler flow, and uniqueness of the local minimizing geodesic.
\end{remark}

The restriction to the two-dimensional case is crucial for the argument on account of the use of geodesic completeness or, in other words, the global existence of solutions (a standard assumption in the finite dimensional Hopf-Rinow theorem). Furthermore, every point on a length-minimizing geodesic is metrically between its endpoints, whereas the same cannot be said of a point along a nonminimizing geodesic.

\begin{figure}[H]
\centering
\begin{tikzpicture}[
    x=1cm,
    y=1cm,
    >=stealth,
    sphere/.style={thick,fill=black!3},
    hidden/.style={gray,densely dashed,line width=.85pt},
    visible/.style={gray,line width=.85pt},
    geodesic/.style={blue,line width=1.15pt},
    minimizing/.style={ForestGreen!75!black,line width=1.25pt},
    point/.style={circle,fill=black,inner sep=1.7pt}
]

\begin{scope}[xshift=-3.35cm]
\draw[sphere] (0,0) circle (1.50);
\draw[hidden]
    (1.50,0)
    arc[start angle=0,end angle=180,
        x radius=1.50,y radius=.33];
\draw[visible]
    (-1.50,0)
    arc[start angle=180,end angle=360,
        x radius=1.50,y radius=.33];

\coordinate (Nleft) at (0,1.50);
\coordinate (xleft) at
    ({.55*sin(52)},{1.50*cos(52)});
\coordinate (pleft) at
    ({.55*sin(90)},{1.50*cos(90)});

\draw[geodesic,->]
    plot[
        domain=0:90,
        samples=70,
        variable=\t
    ]
    ({.55*sin(\t)},{1.50*cos(\t)});

\node[point,label=above:$N$] at (Nleft) {};
\node[point,label=right:$p$] at (pleft) {};

\fill[white] (xleft) circle (2pt);
\draw[black,line width=.85pt] (xleft) circle (2pt);
\node[right] at
    ({.55*sin(52)+.09},{1.50*cos(52)+.04}) {$x$};

\node[font=\small] at (1.1,1.7)
    {$\quad \mathbb{S}^2\setminus\{x\}$};
\end{scope}

\begin{scope}[xshift=3.35cm]
\draw[sphere] (0,0) circle (1.50);
\draw[hidden]
    (1.50,0)
    arc[start angle=0,end angle=180,
        x radius=1.50,y radius=.33];
\draw[visible]
    (-1.50,0)
    arc[start angle=180,end angle=360,
        x radius=1.50,y radius=.33];

\coordinate (Nright) at (0,1.50);
\coordinate (Sright) at (0,-1.50);
\coordinate (pright) at
    ({.42*sin(235)},{1.50*cos(235)});

\draw[geodesic,densely dashed]
    plot[
        domain=0:180,
        samples=80,
        variable=\t
    ]
    ({.42*sin(\t)},{1.50*cos(\t)});

\draw[geodesic,->]
    plot[
        domain=180:235,
        samples=35,
        variable=\t
    ]
    ({.42*sin(\t)},{1.50*cos(\t)});

\draw[minimizing,->]
    plot[
        domain=360:235,
        samples=70,
        variable=\t
    ]
    ({.42*sin(\t)},{1.50*cos(\t)});

\node[point,label=above:$N$] at (Nright) {};
\node[point,label=below right:$S$] at (Sright) {};
\node[point,label=left:$p$] at (pright) {};

\node[font=\small] at (0.9,1.7)
    {$\quad \mathbb{S}^2$};
\end{scope}

\end{tikzpicture}
\caption{The image on the left shows the importance of completeness. Indeed, there are nontrivial points which are metrically between the north pole $N$ and the point $p$, but there exists no length-minimizing geodesic connecting $N$ to $p$. On the right we see the natural relationship between metric betweenness and minimizing geodesics: each point along the green geodesic is metrically between $N$ and $p$, while none of the interior points along the blue geodesic are metrically between $N$ and $p$.}
\label{sphere metric betweenness figure}
\end{figure}

\begin{proof}[Proof of Theorem \ref{Hopf-Rinow criteria}]
    Assume that $(i)$ holds and let $\eta \in \Dmu^s \setminus \closedhood{e}{\varepsilon}$ be attainable from the identity. By assumption, there exists $\xi \in \closedhood{e}{\varepsilon} \setminus \{e\}$ which is metrically between $e$ and $\eta$. Let $u_\xi = \riemexp^{-1} \xi \in \closedball{\varepsilon} \setminus \{0\}$ and $\hat u_\xi = \frac{u_\xi}{\norm{u_\xi}_{L^2}}$. The geodesic given by $\geod(t) = \riemexp t \hat u_\xi$ is defined for all $t \geq 0$, has unit $L^2$-speed and, by Theorem \ref{local geodesic convexity}, is $L^2$-length-minimizing for $0 \leq t \leq \norm{u_\xi}_{L^2}$. Furthermore, using the fact that $\xi$ is metrically between $e$ and $\eta$, the length-minimizing property gives that $\geod(t)$ is metrically between $e$ and $\eta$ for all $0 \leq t \leq \norm{u_\xi}_{L^2}$. Next let
    \begin{equation*}
        \mathcal{S}=\bigl\{\tau\in\big[0,\geodistance(e, \eta)\big]:\geodistance(\geod(\tau),\eta)=\geodistance(e, \eta)-\tau\bigr\}.
    \end{equation*}
    If $\tau\in\mathcal{S}$ and $0\leq t\leq\tau$, the triangle inequality and the unit speed of $\geod$ give
    \begin{equation*}
        \geodistance(e, \eta)\leq\geodistance(e,\geod(t))+\geodistance(\geod(t),\eta)
        \leq t+(\tau-t)+(\geodistance(e, \eta)-\tau)=\geodistance(e, \eta).
    \end{equation*}
    Consequently $\geodistance(e,\geod(t))=t$ and $\geodistance(\geod(t),\eta)=\geodistance(e, \eta)-t$. Hence $\mathcal{S}$ is an interval, and for every $\tau\in\mathcal{S}$ the geodesic $\geod$ is $L^2$ length-minimizing on $[0,\tau]$, with $\geod(t)$ metrically between $e$ and $\eta$ for every $t\in[0,\tau]$. Furthermore, as $\geodistance(\geod(t),\geod(t'))\leq\abs{t-t'}$, we have that $\mathcal{S}$ is closed. Set $T = \sup \mathcal{S}$.  From the discussion above we have that $T \in \mathcal{S}$ and $[0, \norm{u_\xi}_{L^2}] \subset \mathcal{S}$, so $T>0$. Assume that $T < \geodistance(e, \eta)$ and let $\varphi = \geod(T) \neq \eta$. Right-invariance gives
    \begin{equation*}
        \geodistance(e, \eta \circ \varphi^{-1}) = \geodistance(\varphi, \eta) < \geodistance(e, \eta)
    \end{equation*}
    and hence $\eta \circ \varphi^{-1}$ is attainable from the identity. If $\eta \circ \varphi^{-1} \in \closedhood{e}{\varepsilon}$, let $\zeta = \eta \circ \varphi^{-1}$, otherwise by the assumption $(i)$, we have that there exists $\zeta \in \closedhood{e}{\varepsilon} \setminus \{e\}$ which is metrically between $e$ and $\eta \circ \varphi^{-1}$. Let $u_\zeta = \riemexp^{-1} \zeta \in \closedball{\varepsilon} \setminus \{0\}$ and $\hat u_\zeta = \frac{u_\zeta}{\norm{u_\zeta}_{L^2}}$. As above, the geodesic given by $t \mapsto \riemexp t \hat u_\zeta$ is defined for all $t \geq 0$, has unit $L^2$-speed and, for $0 \leq t \leq \norm{u_\zeta}_{L^2}$, is both $L^2$-length-minimizing and metrically between $e$ and $\eta \circ \varphi^{-1}$. Right-invariance then gives the geodesic $\tilde \geod(t) = \riemexp (t \hat u_\zeta) \circ \varphi$ is both $L^2$ length-minimizing and metrically between $\varphi$ and $\eta$ for all $0 \leq t \leq \norm{u_\zeta}_{L^2}$. Hence $\zeta \circ \varphi$ is metrically between $\varphi$ and $\eta$. In fact, $\zeta \circ \varphi$ is metrically between $e$ and $\eta$. Indeed, from the triangle inequality it follows that
    \begin{align*}
        \geodistance(e, \zeta \circ \varphi) + \geodistance(\zeta \circ \varphi, \eta) &\leq \geodistance(e, \varphi) + \geodistance(\varphi, \zeta \circ \varphi) + \geodistance(\zeta \circ \varphi, \eta) \\
        &= \geodistance(e, \varphi) + \geodistance(\varphi, \eta) \\
        &= \geodistance(e, \eta) \\
        &\leq \geodistance(e, \zeta \circ \varphi) + \geodistance(\zeta \circ \varphi, \eta),
    \end{align*}
    so every inequality is in fact an equality. Consequently we have
    \begin{align*}
        \geodistance(e, \zeta \circ \varphi) &= \geodistance(e, \eta) - \geodistance(\zeta \circ \varphi, \eta) \\
        &= \geodistance(e, \varphi) + \geodistance(\varphi, \eta) - \geodistance(\zeta \circ \varphi, \eta) \\
        &= \geodistance(e, \varphi) + \geodistance(\varphi, \zeta \circ \varphi) \\
        & = \length{\geod\vert_{[0, T]}} + \length{\tilde{\geod}\vert_{[0, \norm{u_\zeta}_{L^2}]}}.
    \end{align*}
    and hence the concatenated curve $\geod \sqcup \tilde{\geod}$ is a length-minimizing admissible curve from $e$ to $\zeta \circ \varphi$. The standard variational no-corner argument gives that the tangent vectors to both geodesics agree at the point of concatenation. Indeed, take two sufficiently close points on either side of the join at $\varphi$. After right-translation by $\varphi^{-1}$, these points lie in a normal neighborhood of the identity on which Theorem~\ref{geodesic convexity of normal neighborhoods} applies. The translated portion of the concatenation is itself length-minimizing, and hence coincides, up to reparameterization, with the unique minimizing geodesic between these points. Both pieces have unit $L^2$ speed, so their tangent vectors agree at the join. Hence $\tilde \gamma(t) = \gamma(T + t)$ for $0 \leq t \leq \norm{u_\zeta}_{L^2}$. Consequently $\geod$ is length-minimizing and metrically between $e$ and $\eta$ for all $0 \leq t \leq T + \norm{u_\zeta}_{L^2}$. In particular $T + \norm{u_\zeta}_{L^2} \in \mathcal{S}$; a contradiction. So $\geod : [0, \geodistance(e, \eta)] \rightarrow \Dmu^s$ is a length-minimizing geodesic from $e$ to $\eta$.

    Assume next that $(ii)$ holds and let $\eta \in \Dmu^s \setminus \closedhood{e}{\varepsilon}$ be attainable from the identity. By assumption there exists $u \in T_e\Dmu^s$ such that $\gamma: [0,1] \rightarrow \Dmu^s \ ; \  t \mapsto \riemexp t u$, is a length-minimizing geodesic connecting $e$ to $\eta$. Let $t_\varepsilon \in (0,1)$ such that $t_\varepsilon u \in \normalball{\varepsilon}$. Every point in the interior of a length-minimizing geodesic is metrically between $e$ and $\eta$. Letting $\xi = \riemexp t_\varepsilon u$ yields the result.
\end{proof}

\enlargethispage{3\baselineskip}

A natural initial attempt to construct a nontrivial $\xi \in \closedhood{e}{\varepsilon}$ which is metrically between $e$ and $\eta$ is to take a sequence of admissible curves connecting them whose length tends towards $\geodistance(e, \eta)$. One then considers the sequence of `first exit points' where the admissible curve leaves the normal neighborhood (a necessary condition if $\eta$ lies in the complement). The following proposition explores this approach and leads us to the key analytical obstacle. As in Subsection \ref{compactness} we assume that $\varepsilon>0$ and $ \varepsilon'>0$ are such that the $L^2$-exponential map restricted to $\normalball{\varepsilon}$ and $\normalball[s']{\varepsilon'}$ is a diffeomorphism of the $H^s$- and $H^{s'}$-topologies respectively. We further assume that $\varepsilon>0$ is chosen sufficiently small so that $\normalball{\varepsilon} \subset \normalball[s']{\tfrac{\varepsilon'}{2}}$ and that
\begin{equation*}
    \closedhood{e}{\varepsilon} = \riemexp \closedball[s]{\varepsilon}
\end{equation*}
is contained in a slightly larger normal neighborhood on which both Theorem \ref{local geodesic convexity} and Corollary \ref{Phi Lipschitz} apply.

\begin{proposition}
    For all $\eta \in \Dmu^s(\T^n) \setminus \closedhood{e}{\varepsilon}$ attainable from the identity, there exists a sequence $\{\xi_j\} \subset \closedhood{e}{\varepsilon}$ and a point $\xi \in \closedhood{e}{\varepsilon}$ such that
    \begin{equation*}
        \norm{\riemexp^{-1} \xi_j}_{H^s} = \varepsilon, \quad \geodistance(\xi, \xi_j) \to 0
    \end{equation*}
    and $\xi$ is metrically between $e$ and $\eta$.
\end{proposition}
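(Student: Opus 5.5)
Write $d=\geodistance$ and $D=d(e,\eta)$. The plan is the first-exit construction described just before the statement, with the missing compactness supplied by Proposition~\ref{compactness of closed normal neighborhood}.

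\emph{Step 1: exit points.} Choose admissible curves $\admiss_j:[0,1]\to\Dmu^s$ from $e$ to $\eta$ with $\length{\admiss_j}\le D+\tfrac1j$. Consider the continuous map $t\mapsto\norm{\riemexp^{-1}\admiss_j(t)}_{H^s}$, defined while $\admiss_j$ stays in the larger normal neighborhood on which $\riemexp^{-1}$ is a diffeomorphism. It is $0$ at $t=0$. Since $\eta\notin\closedhood{e}{\varepsilon}$, it must reach the value $\varepsilon$ before $\admiss_j$ can leave that larger neighborhood. Let $t_j$ be the first time it equals $\varepsilon$ and put $\xi_j=\admiss_j(t_j)$. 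Then $\norm{\riemexp^{-1}\xi_j}_{H^s}=\varepsilon$ and $\xi_j\in\closedhood{e}{\varepsilon}$. Splitting $\admiss_j$ at $t_j$ gives
\begin{equation*}
d(e,\xi_j)+d(\xi_j,\eta)\le\length{\admiss_j}\le D+\tfrac1j,
\end{equation*}
that is, $E_\eta(\xi_j)\le \tfrac1j$.

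\emph{Step 2: limit point.} By Proposition~\ref{compactness of closed normal neighborhood}, $\closedhood{e}{\varepsilon}$ is compact for $d$. After passing to a subsequence (relabelled) we get $\xi\in\closedhood{e}{\varepsilon}$ with $d(\xi,\xi_j)\to0$. That proof in fact gives $\xi=\riemexp u$ where $u$ is a weak $H^s$ limit of $u_j=\riemexp^{-1}\xi_j$. In particular $\norm{u}_{H^s}\le\varepsilon$, but $\xi$ need not lie on the boundary; this is why the statement only asserts the boundary property for the $\xi_j$.

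\emph{Step 3: betweenness.} This is the only step requiring care, and I expect it to be the main obstacle. On $\Dmu^s$ the function $d$ is $1$-Lipschitz for itself by the triangle inequality, so continuity of $E_\eta$ in the metric $d$ comes for free. I will check that all quantities are finite. Each $\xi_j$ is attainable, since $d(e,\xi_j)\le D+1$. Also $d(\xi,\xi_j)<\infty$, so $\xi$ is attainable as well. Hence
\begin{equation*}
\abs{E_\eta(\xi)-E_\eta(\xi_j)}\le 2\,d(\xi,\xi_j)\to0.
\end{equation*}
Combining this with $0\le E_\eta(\xi_j)\le\tfrac1j$ gives $E_\eta(\xi)=0$, so $\xi$ is metrically between $e$ and $\eta$.

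The essential input is that $d$-compactness is available on $\closedhood{e}{\varepsilon}$, whereas the $H^s$-sphere itself is not compact. The only subtle point is the well-definedness of the first exit time. It rests on continuity of $\admiss_j$ in $H^s$ (the curves are admissible) and on the fact that $\riemexp^{-1}$ is defined on an open neighborhood containing $\closedhood{e}{\varepsilon}$, so the curve cannot leave $\closedhood{e}{\varepsilon}$ without crossing the level set $\{\norm{\riemexp^{-1}\cdot}_{H^s}=\varepsilon\}$.
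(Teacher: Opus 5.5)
Your proof is correct and follows essentially the paper's argument: take first-exit points $\xi_j$ of curves with $\length{\admiss_j}<\geodistance(e,\eta)+\tfrac1j$, use the $\geodistance$-compactness of $\closedhood{e}{\varepsilon}$ from Proposition~\ref{compactness of closed normal neighborhood}, and conclude from $E_\eta(\xi)\le 2\geodistance(\xi,\xi_j)+\tfrac1j$. The one step to tighten is the existence of the exit point. What is actually needed is that the $H^s$-closure of $\normalhood{e}{\varepsilon}$ is exactly $\closedhood{e}{\varepsilon}$, which is the second assertion of that same proposition and is what the paper cites; in infinite dimensions this closedness does not follow merely from $\riemexp^{-1}$ being defined on a larger open set.
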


\begin{proof}
    Consider a sequence of admissible curves $\sigma_j$ connecting $e$ to $\eta$ with $\length{\sigma_j} < \geodistance(e, \eta) + \frac{1}{j}$ and let $\xi_j$ be the first point where $\sigma_j(t) \notin \normalhood{e}{\varepsilon}$. As each $\sigma_j$ is continuous in the $H^s$ topology we have $\xi_j \in \overline{\normalhood{e}{\varepsilon}}^{_{H^s}} \setminus \normalhood{e}{\varepsilon} = \closedhood{e}{\varepsilon} \setminus \normalhood{e}{\varepsilon}$ and consequently $\norm{\riemexp^{-1} \xi_j}_{H^s} = \varepsilon$.

    By Proposition \ref{compactness of closed normal neighborhood} there exists a subsequence, which we continue to denote by $\{\xi_j\}$, and some $\xi \in \closedhood{e}{\varepsilon}$ for which $\geodistance(\xi, \xi_j) \to 0$. Moreover, $\xi_j \to \xi$ strongly in the $H^{s'}$ topology.

    It remains to show that $\xi$ is metrically between $e$ and $\eta$. Expanding the excess functional we have
    \begin{align*}
        E_\eta(\xi) &= E_\eta(\xi) - E_\eta(\xi_j) + E_\eta(\xi_j) \\
        & = \big(\geodistance(e, \xi) - \geodistance(e, \xi_j)\big) + \big(\geodistance(\xi, \eta) - \geodistance(\xi_j, \eta)\big)  \\
        &  \qquad \qquad + \big(\geodistance(e, \xi_j) + \geodistance(\xi_j, \eta) - \geodistance(e, \eta)\big).
    \end{align*}
    Applying the triangle inequality to the first two terms and recalling that $\length{\sigma_j} < \geodistance(e, \eta) + \frac{1}{j}$ then gives
    \begin{equation*}
        E_\eta(\xi) \leq 2\geodistance(\xi, \xi_j) + \frac{1}{j}
    \end{equation*}
    and the result follows.
\end{proof}

\begin{remark}\label{finite dimensional zero excess mechanism}
The role of metric betweenness in the classical Hopf-Rinow argument can be seen from the compactness of small metric spheres. Let $(M,g)$ be a finite-dimensional Riemannian manifold, let $p,q\in M$, and choose
\begin{equation*}
    0<r<d(p,q)
\end{equation*}
sufficiently small that $\overline{B}(p,r)$ is contained in a normal neighborhood of $p$. The metric sphere
\begin{equation*}
    \partial B(p, r)=\{x\in M:d(p,x)=r\}
\end{equation*}
is then compact. Let $\sigma_j$ be piecewise $C^1$ curves from $p$ to $q$ whose lengths converge to $d(p,q)$, and let $x_j$ be the first point at which $\sigma_j$ meets $\partial B(p, r)$. We have
\begin{equation*}
    d(p,q)\leq r+d(x_j,q)\leq\mathcal L(\sigma_j)\rightarrow d(p,q).
\end{equation*}
After passing to a subsequence, compactness gives
\begin{equation*}
    x_j\rightarrow x\in \partial B(p, r).
\end{equation*}
Continuity of the distance therefore gives
\begin{equation*}
    d(p,q)=d(p,x)+d(x,q)=r+d(x,q).
\end{equation*}
Moreover, for every $z\in\overline{B}(p,r)$, the triangle inequality gives
\begin{equation*}
    d(z,q)\geq d(p,q)-d(p,z)\geq d(p,q)-r.
\end{equation*}
Consequently,
\begin{equation*}
    \min_{z\in\overline{B}(p,r)}d(z,q) =\min_{z\in \partial B(p, r)}d(z,q) =d(p,q)-r,
\end{equation*}
and the minimizers over $\overline{B}(p,r)$ are precisely the points of $\partial B(p, r)$ which are metrically between $p$ and $q$. Hence, in finite dimensions, minimizing the distance to the target over a sufficiently small closed metric ball automatically produces the intermediate point required by the continuation argument. Together with geodesic completeness and the no-corner argument, this is the mechanism underlying the classical Hopf-Rinow theorem.

There are two distinctions in the hydrodynamical setting. First, the corresponding first-exit set is
\begin{equation*}
    \partial \normalhood{e}{\varepsilon} = \closedhood{e}{\varepsilon}\setminus\normalhood{e}{\varepsilon} = \left\{\riemexp u:\norm{u}_{H^s}=\varepsilon\right\},
\end{equation*}
whereas Theorem \ref{local geodesic convexity} gives
\begin{equation*}
    \geodistance(e,\riemexp u)=\norm{u}_{L^2}.
\end{equation*}
Hence $\partial \normalhood{e}{\varepsilon}$ is not a sphere for the geodesic distance. In particular, minimizing $\geodistance(\xi,q)$ over $\partial \normalhood{e}{\varepsilon}$ is not equivalent to requiring
\begin{equation*}
    \geodistance(e,q) = \geodistance(e,\xi)+\geodistance(\xi,q).
\end{equation*}
The weak metric acts as a distorting lens on the normal neighborhood: a point may be closer to the target without being metrically between the identity and the target. Hence metric betweenness is the appropriate notion in this context.

Second, $\partial \normalhood{e}{\varepsilon}$ is not closed, and hence is not compact, with respect to the geodesic distance. Indeed, high-frequency sequences may satisfy
\begin{equation*}
    \norm{u_j}_{H^s}=\varepsilon, \qquad \norm{u_j}_{L^2}\rightarrow0,
\end{equation*}
and therefore
\begin{equation*}
    \geodistance(e,\riemexp u_j)\rightarrow0.
\end{equation*}
Proposition~\ref{compactness of closed normal neighborhood} still gives a convergent subsequence in the full closed normal neighborhood, but its limit may lie away from $\partial \normalhood{e}{\varepsilon}$ and may even be the identity diffeomorphism.

Indeed, it is reasonable to expect that even in the two-dimensional setting the strong hydrodynamical Hopf-Rinow theorem is false. Numerical simulations\footnote{The authors thank Alexander Shnirelman for bringing this work to our attention.} related to the problem of finding global minimizers of the Dirichlet energy suggest a means of establishing this, in combination with Theorem \ref{Hopf-Rinow criteria}, cf. \cite{lan2012the}.

The Grossman-type example below illustrates a further failure which can occur on a general Hilbert manifold: sequences whose excess tends to zero may escape without possessing any convergent subsequence, cf. \cite{grossman1965hilbert}.
\end{remark}

\begin{example}\label{Grossman example}
Let $\{e_j\}_{j\geq0}$ denote the standard orthonormal basis of $\ell^2$, the set of real square-summable sequences, and set
\begin{equation*}
a_0=1, \qquad a_j=1+2^{-j} \quad \text{for }j\geq1.
\end{equation*}
Consider the strong Riemannian Hilbert manifold
\begin{equation*}
\mathcal M=\biggl\{x\in\ell^2:\sum_{j=0}^\infty\frac{x_j^2}{a_j^2}=1\biggr\},
\end{equation*}
equipped with the metric induced by $\ell^2$. It is metrically, and hence geodesically, complete and the image of $\mathbb S^\infty$, the unit sphere in $\ell^2$, under the diffeomorphism
\begin{equation*}
F:\mathbb S^\infty \rightarrow \mathcal M, \qquad F(y)=\sum_{j=0}^\infty a_jy_je_j.
\end{equation*}
The pullback of the standard metric on $\mathcal M$ is
\begin{equation}\label{Grossman pullback metric}
\norm{v}_{\mathcal M}^2=\sum_{j=0}^\infty a_j^2v_j^2
=\norm{v}_{\ell^2}^2+\sum_{j=1}^\infty\bigl(a_j^2-1\bigr)v_j^2.
\end{equation}
Hence, as $\norm{v}_{\mathcal M} \geq \norm{v}_{\ell^2}$, every curve from the north pole $N = e_0$ to the south pole $S= -e_0$ has length at least $\dist^{\mathbb S^\infty}(N,S) = \pi$. On the other hand, for each $j\geq1$ the curve
\begin{equation*}
\gamma_j(t)=\cos(t)e_0+a_j\sin(t)e_j, \qquad 0\leq t\leq\pi,
\end{equation*}
joins $N$ to $S$ in $\mathcal M$ and has length
\begin{equation}\label{Grossman meridian length}
L_j=\int_0^\pi\sqrt{\sin^2(t)+a_j^2\cos^2(t)}\,dt\rightarrow\pi.
\end{equation}
Consequently,
\begin{equation*}
\dist^{\mathcal M}(N,S)=\pi.
\end{equation*}

However, this distance is not attained. Furthermore, there is no nontrivial point metrically between the poles. To see this, take $x=F(y)\in\mathcal M\setminus\{N,S\}$ and set
\begin{equation*}
\theta=\dist^{\mathbb S^\infty}(N,y)=\arccos(y_0)\in(0,\pi).
\end{equation*}
Choose $k\geq1$ for which $y_k\neq0$, and write $\delta_k=a_k^2-1>0$. If $c:[0,1]\to \mathbb S^\infty$ joins $N$ to $y$, reparameterize it to have constant speed with respect to the metric in \eqref{Grossman pullback metric}. The Cauchy-Schwarz inequality gives
\begin{align*}
\mathcal L_g(c)^2 &=\int_0^1\norm{\dot c(t)}_{\ell^2}^2\,dt +\sum_{j=1}^\infty\bigl(a_j^2-1\bigr)\int_0^1\dot c_j(t)^2\,dt\\
&\geq \theta^2+\delta_k y_k^2.
\end{align*}
Taking the infimum over such curves gives
\begin{equation*}
\dist^{\mathcal M}(N,x)\geq\sqrt{\theta^2+\delta_k y_k^2}>\theta.
\end{equation*}
The same argument for curves from $y$ to $S$ gives
\begin{equation*}
\dist^{\mathcal M}(x,S)\geq\sqrt{(\pi-\theta)^2+\delta_k y_k^2}>\pi-\theta.
\end{equation*}
We therefore have
\begin{equation*}
\dist^{\mathcal M}(N,x)+\dist^{\mathcal M}(x,S)>\pi=\dist^{\mathcal M}(N,S),
\end{equation*}
and hence
\begin{equation*}
\bigl\{x\in\mathcal M:\dist^{\mathcal M}(N,x)+\dist^{\mathcal M}(x,S)=\dist^{\mathcal M}(N,S)\bigr\}=\{N,S\}.
\end{equation*}

The failure is caused by escape in the axis index. Indeed, let $x_j=\gamma_j\bigl(\tfrac{\pi}{2}\bigr)=a_je_j$. The two halves of $\gamma_j$ have the same length, and \eqref{Grossman meridian length} gives
\begin{equation*}
0<\dist^{\mathcal M}(N,x_j)+\dist^{\mathcal M}(x_j,S)-\dist^{\mathcal M}(N,S) \leq L_j-\pi\rightarrow0.
\end{equation*}
Hence, these points have excess tending to zero, but for $j\neq k$ we have
\begin{equation*}
\norm{x_j-x_k}_{\ell^2}^2=a_j^2+a_k^2\geq2,
\end{equation*}
so $\{x_j\}$ has no strongly convergent subsequence. Formally adjoining an axis with limiting coefficient $a_\infty=1$ would produce a limiting circular meridian containing points metrically between the poles, as illustrated below. However, that circle is not contained in $\mathcal M$.
\begin{figure}[H]
\centering
\begin{tikzpicture}[scale=2.15,>=stealth]
\draw[->,gray] (0,-1.22)--(0,1.30) node[above] {$e_0$};
\draw[->,gray] (-2,0)--(2,0) node[right] {$e_j$};
\draw[densely dashed,black!55] plot[domain=0:360,samples=160]
  ({sin(\x)},{cos(\x)});
\draw[blue!35,line width=.7pt] plot[domain=0:360,samples=160]
  ({1.5*sin(\x)},{cos(\x)});
\draw[blue!80,line width=.9pt] plot[domain=0:360,samples=160]
  ({1.0625*sin(\x)},{cos(\x)});
\fill (0,1) circle (.025) node[above left] {$N$};
\fill (0,-1) circle (.025) node[below right] {$S$};
\fill[blue!35] (1.5,0) circle (.018) node[above right] {$x_1$};
\fill[blue!90] (1.0625,0) circle (.018) node[above right] {$x_j$};
\node[blue!90] at (1.07,.49) {$\gamma_j$};
\end{tikzpicture}
\caption{A schematic overlay of Grossman's example. Each solid ellipse is the intersection of $\mathcal M$ with the plane spanned by $\{e_0,e_j\}$. Their planar profiles approach the dashed unit circle, but the corresponding points $\{x_j\}$ escape in mutually orthogonal directions. The dashed circle, which would contain points metrically between the poles, is not contained in $\mathcal M$.}
\label{Grossman figure}
\end{figure}
\end{example}

\enlargethispage{2\baselineskip}

\section{Regularity of the squared geodesic distance in the \texorpdfstring{$L^2$}{L2} topology}\label{regularity of squared geodesic distance}
There are two related ways to interpret the local comparison between the geodesic and ambient $L^2$ distances established in Section~\ref{local metric geometry}.

First, after identifying diffeomorphisms with their canonical displacements, Corollary~\ref{Phi Lipschitz} shows that the geodesic distance and the ambient $L^2$ distance define bi-Lipschitz equivalent metrics on $\normalhood{e}{\varepsilon}$, after shrinking $\varepsilon>0$ if necessary. Equivalently,
\begin{equation*}
\bigl(\normalhood{e}{\varepsilon},\geodistance\bigr)
\hookrightarrow L^2(\T^n,\R^n)
\end{equation*}
is a bi-Lipschitz embedding onto its image. It is natural to ask whether this metric comparison reflects any further ambient $L^2$ structure; for example, whether $\normalhood{e}{\varepsilon}$ can be regarded as a $C^1$ submanifold of $L^2(\T^n,\R^n)$ in the sense of Eells \cite{eells1966a}. However, this cannot be. Indeed, its ambient tangent space at $e$ would have to contain $\overline{T_e\Dmu^s}^{\,L^2}=\Ltwodiv$, while differentiating the volume-preserving identity along ambient $L^2$-differentiable curves gives the reverse inclusion. Hence its tangent space at $e$ would be $\Ltwodiv$. The map
\begin{equation*}
\eta\mapsto\leray_e(\eta-e)
\end{equation*}
would then provide local coordinates whose image is open in $\Ltwodiv$. On the other hand, its image is contained in $T_e\Dmu^s$, since $\eta-e\in H^s$ and $\leray_e$ preserves $H^s$; a contradiction as $T_e\Dmu^s$ has empty interior in $\Ltwodiv$.

A second and more concrete interpretation concerns the ambient regularity of the scalar distance function
\begin{equation*}
f:\normalhood{e}{\varepsilon}\to\R, \qquad
f(\eta)=\geodistance(e,\eta).
\end{equation*}
The triangle inequality and Corollary~\ref{Phi Lipschitz} give
\begin{equation*}
\abs{f(\eta)-f(\varphi)}
\leq\geodistance(\eta,\varphi)
\leq C\norm{\eta-\varphi}_{L^2},
\end{equation*}
while Theorem~\ref{local Shnirelman inequality} gives
\begin{equation*}
\norm{\eta-e}_{L^2}
\leq f(\eta)
\leq C\norm{\eta-e}_{L^2}.
\end{equation*}
Hence $f$ is locally $L^2$-Lipschitz and is quantitatively comparable to the ambient distance from the identity. One should not, however, expect $f$ to be differentiable at $e$. Indeed, the function has the same cusp at the identity as a norm. A natural alternative to consider instead is the squared distance
\begin{equation*}
\sqdistance:\normalhood{e}{\varepsilon}\to\R, \qquad
\sqdistance(\eta)=\geodistance(e,\eta)^2,
\end{equation*}
which removes this obstruction. However, even after squaring the distance, the fact that $\normalhood{e}{\varepsilon}$ has empty interior in $L^2(\T^n,\R^n)$ prevents us from applying the ordinary theory of Fr\'echet differentiable functions directly. A closely related difficulty appears in the Lagrangian treatment of the Euler equation by Bourguignon and Brezis \cite{bourguignon1974remarks}. Their evolution equation is formulated on a closed subset of a Banach space, rather than on an open set, and local solutions are obtained by applying a theorem of Martin \cite{martin1973differential} based on a subtangency condition. In this section we will show, via a Whitney extension type theorem \cite{whitney1934analytic} adapted to the Hilbert space setting \cite{azagra2021extension, johanis2024whitney}, that the squared geodesic distance on $\normalhood{e}{\varepsilon}$ admits a $C^{1,1}$ extension to the full ambient $L^2$ space. In particular, we prove the following.

\begin{theorem}[$C^{1,1}$ extension]\label{Whitney extension of squared distance}
Let $n\geq2$ and $s>\frac n2+1$. There exist $\varepsilon>0$ and a function
\begin{equation*}
    \widetilde{\Phi}\in C^{1,1}\bigl(L^2(\T^n,\R^n)\bigr)
\end{equation*}
such that
\begin{equation*}
    \widetilde{\Phi}\Big\vert_{\normalhood{e}{\varepsilon}}(\eta)=\sqdistance(\eta) =\geodistance(e,\eta)^2.
\end{equation*}
 Moreover, for every $ u = \riemexp^{-1} \eta \in\normalball{\varepsilon}$,
\begin{equation}\label{squared distance differential}
    d\widetilde{\Phi}(\eta)h = 2\ip{d\riemexp(u)u,h}_{L^2}, \qquad h\in L^2(\T^n,\R^n).
\end{equation}
\end{theorem}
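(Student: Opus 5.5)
The plan is to apply the $C^{1,1}$ Whitney extension theorem for Hilbert spaces in the form of Azagra--Le Gruyer--Mudarra / Johanis. That theorem says the following. Let $E\subset H$ be an arbitrary subset of a Hilbert space, and let $f:E\to\R$ and $G:E\to H$ be given. Suppose there is $M>0$ such that for all $x,y\in E$
\begin{equation*}
f(y)\le f(x)+\ip{G(x),y-x}+M\norm{y-x}^2
\quad\text{and}\quad
\norm{G(x)-G(y)}\le M\norm{x-y}
\end{equation*}
(equivalently, the one-sided two-point condition $f(x)+\ip{G(x),y-x}+\tfrac{1}{4M}\norm{G(x)-G(y)}^2\le f(y)+\dots$ in the Azagra--Mudarra formulation). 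Then there is $F\in C^{1,1}(H)$ with $F|_E=f$ and $\nabla F|_E=G$.

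We take $H=L^2(\T^n,\R^n)$ and let $E$ be the set of canonical displacements $\zeta_\eta$, $\eta\in\normalhood{e}{\varepsilon}$, with the identification of Section~\ref{local metric geometry}. We set $f=\sqdistance$ and $G(\eta)=2\,d\riemexp(u)u$ for $u=\riemexp^{-1}\eta$. By Theorem~\ref{local geodesic convexity}, $\sqdistance(\eta)=\norm{u}_{L^2}^2$. Note that $d\riemexp(u)u=\dot\geod(1)$ is the terminal Lagrangian velocity of $\geod(t)=\riemexp(tu)$, so $G(\eta)=2\dot\geod(1)$, and $\norm{G(\eta)}_{L^2}=2\norm{u}_{L^2}$. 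Once the Whitney hypotheses are verified, formula \eqref{squared distance differential} is automatic.

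\textbf{Step 1: the Taylor-type inequality.} This is a first-variation/second-variation estimate in the $L^2$ topology. For $\eta,\varphi$ in the neighborhood, write $\varphi=\riemexp v$ and $\eta=\riemexp u$. For the upper bound on $\sqdistance(\varphi)$ we build a competitor curve from $e$ to $\varphi$ by perturbing $\geod$: take $\sigma(t)=\geod(t)+t(\varphi-\eta)$ in lifted displacement form, projected back to $\Dmu^s$, or better, use right translation $\sigma(t)=\riemexp(t w)\circ\geod(t)$-type corrections. The cleanest choice seems to be the following. By Corollary~\ref{Phi Lipschitz} and geodesic convexity (Theorem~\ref{geodesic convexity of normal neighborhoods}), $\sqdistance(\varphi)\le 2\action{\sigma}$ for any admissible $\sigma$. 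Compare $\sigma$ with the straight perturbation in $L^2$ and expand the action exactly as in the proof of Theorem~\ref{local geodesic convexity}:
\begin{equation*}
2\action{\sigma}-2\action{\geod}=\int_0^1\!\!\int\abs{\partial_t(\lift\sigma-\lift\geod)}^2+2\ip{\dot\geod(1),\lift\sigma(1)-\lift\geod(1)}+2\int_0^1\!\!\int(\nabla p\circ\lift\geod)\cdot(\lift\sigma-\lift\geod).
\end{equation*}
Here the boundary term now survives at $t=1$. Using the pressure-Hessian bound (Lemma~\ref{pressure hessian bound}) together with volume preservation, as before, the last term is $O(\norm{\lift\sigma-\lift\geod}^2)$. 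So the claim reduces to producing a competitor $\sigma$ with $\norm{\partial_t(\lift\sigma-\lift\geod)}_{L^2_{t,x}}\le C\norm{\varphi-\eta}_{L^2}$. The lower bound direction follows symmetrically: swap roles, and use that $\geod$ minimizes action for $\eta$ and the convexity estimate (Poincaré $\pi^2$ beats the pressure Hessian), which gives $f(y)\ge f(x)+\ip{G(x),y-x}-M\norm{y-x}^2$.

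\textbf{Step 2: the competitor.} We take $\sigma(t)=\riemexp(t\,w)\circ\ldots$; concretely, let $\beta$ be the minimizing geodesic from $\eta$ to $\varphi$ from Theorem~\ref{geodesic convexity of normal neighborhoods}, i.e. $\beta(t)=\riemexp(t w)\circ\eta$ with $w=\riemexp^{-1}(\varphi\circ\eta^{-1})$. We then set $\sigma(t)=\riemexp(t w)\circ\geod(t)$. This is admissible from $e$ to $\varphi$, and
\begin{equation*}
\partial_t\sigma-\partial_t\geod=\bigl(\dot{(\riemexp tw)}\circ\geod\bigr)+\bigl(D\riemexp(tw)-I\bigr)\dot\geod\circ\geod.
\end{equation*}
The $L^2$ norm of this expression is bounded by $C\norm{w}_{L^2}+C\norm{D\riemexp(tw)-I}_{L^\infty}\norm{u}_{L^2}$. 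The first term is fine, since $\norm{w}_{L^2}\simeq\norm{\varphi-\eta}_{L^2}$ by Corollary~\ref{Phi Lipschitz}. The second term is \emph{the main obstacle}: $\norm{D\riemexp(tw)-I}_{L^\infty}$ is controlled by $\norm{w}_{H^s}$, not by $\norm{w}_{L^2}$. I would try to circumvent this by integrating by parts / using divergence structure, in the spirit of Lemma~\ref{gradient projection bound}, to trade the derivative on $\riemexp(tw)$ for one on $\dot\geod$, which lies in $H^s$ with uniformly small norm. This requires mixed Sobolev estimates of the form $\norm{\riemexp(w)-e-w}_{L^2}\le C\norm{w}_{H^s}\norm{w}_{L^2}$ and $\norm{d\riemexp(u)-d\riemexp(v)}_{L^2\to L^2}\le C\norm{u-v}_{L^2}$. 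We would obtain these from the Weber formula \eqref{Weber formula prelim} and Lemma~\ref{algebra coadjoint estimate} via Gronwall on the Jacobi/linearized Euler equation in $L^2$.

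\textbf{Step 3: Lipschitz gradient.} The same $L^2$-Lipschitz estimate for $u\mapsto\dot\geod(1)=d\riemexp(u)u$, combined with Corollary~\ref{Phi Lipschitz} and Theorem~\ref{local Shnirelman inequality} (so that $\norm{u-v}_{L^2}\lesssim\norm{\eta-\varphi}_{L^2}$, proven by the same Weber-formula argument applied to the difference), gives $\norm{G(\eta)-G(\varphi)}_{L^2}\le M\norm{\eta-\varphi}_{L^2}$. Having verified both hypotheses, we invoke the Hilbert-space Whitney theorem to conclude.
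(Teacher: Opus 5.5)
Your architecture matches the paper's. You take the canonical displacements of $\normalhood{e}{\varepsilon}$ as the set, $f=\sqdistance$, $G(\eta)=2\,d\riemexp(u)u$, verify Whitney--Glaeser conditions with $\omega(t)=t$, and apply the Hilbert-space Whitney theorem. Your one-sided condition together with Lipschitz $G$ is equivalent to the two-sided condition up to constants. The gaps are in how you verify the two conditions.

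\textbf{The competitor in Step 2 cannot work.} For $\sigma(t)=\riemexp(tw)\circ\geod(t)$, volume preservation of $\geod(t)$ gives
\begin{equation*}
\norm{\bigl((D\riemexp(tw)-I)\circ\geod(t)\bigr)\dot\geod(t)}_{L^2}=\norm{\nabla_{u(t)}\zeta_{\riemexp(tw)}}_{L^2}.
\end{equation*}
This is a full derivative of the perturbation's displacement, and it enters the kinetic energy $\norm{\partial_t(\lift{\sigma}-\lift{\geod})}_{L^2_{t,x}}^2$ squared. There is no pairing to integrate by parts against. If $w$ oscillates at frequency $k$ in a direction not orthogonal to $u$, this term is of size $\sim k\norm{w}_{L^2}\simeq k\norm{\varphi-\eta}_{L^2}$. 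So your competitor's action excess is genuinely of order $k^2\norm{\varphi-\eta}_{L^2}^2$, and no uniform $M$ comes out.

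The right competitor is the radial geodesic $\geod_\varphi(t)=\riemexp(tv)$ itself. Your expansion then becomes the exact identity
\begin{equation*}
\sqdistance(\varphi)-\sqdistance(\eta)-\ip{G(\eta),\varphi-\eta}_{L^2}=\norm{\partial_t(\lift{\geod}_\varphi-\lift{\geod})}_{L^2_{t,x}}^2+2\int_0^1\!\!\int_{\T^n}(\nabla p\circ\lift{\geod})\cdot(\lift{\geod}_\varphi-\lift{\geod})\,dx\,dt.
\end{equation*}
Using the one-endpoint Poincar\'e inequality and Lemma~\ref{pressure hessian bound}, both Taylor bounds reduce to two estimates: $\norm{d\riemexp(tv)v-d\riemexp(tu)u}_{L^2}\le C\norm{u-v}_{L^2}$ and $\norm{u-v}_{L^2}\le C\norm{\eta-\varphi}_{L^2}$. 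These are the same two estimates Step 3 needs. The paper instead integrates $G$ along $\theta\mapsto\riemexp(u+\theta h)$, but it rests on the same two estimates.

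\textbf{Those two estimates are the real content, and your proposed route to them fails.} The bound $\opnorm{d\riemexp(u)-d\riemexp(v)}{L^2\to L^2}\le C\norm{u-v}_{L^2}$ is false. Take $v=0$, replace $u$ by $\lambda u$ and let $\lambda\to0$: the bound would give $\norm{d^2\riemexp(0)(u,w)}_{L^2}\le C\norm{u}_{L^2}\norm{w}_{L^2}$. But $d^2\riemexp(0)(u,w)=-\nabla(-\Delta)^{-1}\mathrm{tr}(Du\,Dw)$. For $u=(0,\cos 2\pi kx_1)$ and $w=(\cos 2\pi kx_2,0)$ on $\T^2$, the ratio of the two sides is $\sqrt2\,\pi k$.

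What is true has $\norm{u-v}_{H^{s'}}$ on the right (Proposition~\ref{operator continuity of differential}), or equivalently $\norm{d^2\riemexp(u)(v,w)}_{L^2}\le C\norm{v}_{H^{s'}}\norm{w}_{L^2}$. The missing idea is symmetry of the second derivative. With $u_\theta=u+\theta h$,
\begin{equation*}
\frac{d}{d\theta}\bigl(d\riemexp(u_\theta)u_\theta\bigr)=d^2\riemexp(u_\theta)(u_\theta,h)+d\riemexp(u_\theta)h,
\end{equation*}
so the $H^{s'}$ norm falls on the bounded vector $u_\theta$ and only $\norm{h}_{L^2}$ appears.

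Likewise, $\norm{u-v}_{L^2}\lesssim\norm{\eta-\varphi}_{L^2}$ does not follow from Corollary~\ref{Phi Lipschitz} or Theorem~\ref{local Shnirelman inequality}. Those control $\riemexp^{-1}(\varphi\circ\eta^{-1})$, not $\riemexp^{-1}\varphi-\riemexp^{-1}\eta$. The estimate needs two further ingredients. The first is a uniform $L^2$ coercivity bound $\norm{d\riemexp(u)w}_{L^2}\ge\rho\norm{w}_{L^2}$, obtained from the Jacobi integral equation and Gr\"onwall. The second is the $L^2$ Taylor remainder bound $C\norm{h}_{H^{s'}}\norm{h}_{L^2}$. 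Together these give the result (Proposition~\ref{surjectivity modulus}, Corollary~\ref{L^2 Taylor expansion}, Lemma~\ref{L^2 continuity of inverse exponential map}).
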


The extension $\widetilde{\Phi}$ is not canonical and need not be nonnegative or retain any interpretation as a squared distance away from $\normalhood{e}{\varepsilon}$. In particular, it does not extend either the Riemannian exponential map or the hydrodynamical geometry to arbitrary $L^2$ maps. The $H^s$ differential determines the restriction of the ambient differential to the $L^2$-completion of the tangent space. We choose its extension to the ambient space to vanish on the orthogonal complement of that completed tangent space.

By Theorem~\ref{local geodesic convexity}, if $\eta=\riemexp u$ with $u\in\normalball{\varepsilon}$, then
\begin{equation*}
    \sqdistance(\eta)=\norm{\riemexp^{-1}\eta}_{L^2}^2=\norm{u}_{L^2}^2.
\end{equation*}
Differentiating this identity in the $H^s$ manifold topology and applying the Gauss Lemma gives
\begin{equation*}
    d\sqdistance(\eta)V = 2\ip{u,d(\riemexp^{-1})(\eta)V}_{L^2} = 2\ip{d\riemexp(u)u,V}_{L^2}, \qquad V\in T_\eta\Dmu^s.
\end{equation*}
If $\geod(t)=\riemexp(tu)$, then $d\riemexp(u)u=\dot{\geod}(1)$ and, by constant speed of geodesics we have
\begin{equation*}
    \abs{d\sqdistance(\eta)V} \leq 2\norm{u}_{L^2}\norm{V}_{L^2}, \qquad V\in T_\eta\Dmu^s.
\end{equation*}
By density in $\overline{T_\eta\Dmu^s}^{\,L^2}$ and the $L^2$-orthogonal decomposition at $\eta$, extending this differential by zero on the normal complement gives the canonical candidate
\begin{equation}\label{ambient differential}
    d_{L^2}\sqdistance(\eta)h = 2 \ip{d\riemexp(u)u, h}_{L^2}, \quad h \in L^2(\T^n, \R^n).
\end{equation}
We further establish a uniform $L^2$ lower bound for the first derivative of the Riemannian exponential map.

\begin{proposition}\label{surjectivity modulus}
There exists $\varepsilon>0$ and $\rho(\varepsilon)>0$ such that
\begin{equation}\label{surjectivity modulus estimate}
\norm{d\riemexp(u_0)w}_{L^2} \geq \rho(\varepsilon)\norm{w}_{L^2}
\end{equation}
for all $u_0\in \normalball{\varepsilon}$ and all $w\in T_e\Dmu^s(\T^n)$.
\end{proposition}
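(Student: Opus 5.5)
The plan is to realize $d\riemexp(u_0)w$ as the endpoint value of a Jacobi field along the geodesic $\geod(t)=\riemexp(tu_0)$, and to show that in $L^2$ it stays close to $w$ when $\varepsilon$ is small. Concretely, consider the variation $\geod_\tau(t)=\riemexp\bigl(t(u_0+\tau w)\bigr)$. The field $J(t)=\partial_\tau\geod_\tau(t)|_{\tau=0}$ is then $J(t)=t\,d\riemexp(tu_0)w$, with $J(0)=0$. Right-translate it to the identity by setting $y(t)=J(t)\circ\geod(t)^{-1}\in T_e\Dmu^s$. Linearizing the flow equation $\dot\geod_\tau=u_\tau\circ\geod_\tau$ gives the standard relation
\begin{equation*}
\dot y = v - \ad_{u}y,
\end{equation*}
where $v=\partial_\tau u_\tau|_{\tau=0}$ is the Eulerian variation. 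Linearizing the Euler--Arnold equation \eqref{Euler Arnold} gives
\begin{equation*}
\dot v = -\adstar_u v - \adstar_v u,
\end{equation*}
with initial data $v(0)=w$ and $y(0)=0$. Since right-composition by $\geod(1)$ is an $L^2$ isometry, it suffices to bound $\norm{y(1)}_{L^2}$ from below by $\rho\norm{w}_{L^2}$.

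The key steps are $L^2$ energy estimates in which the coefficients are controlled in $H^s$ by $\sup_t\norm{u(t)}_{H^s}\lesssim\varepsilon$; this smallness follows from the energy estimate in the proof of Lemma~\ref{pressure hessian bound}. For the $v$-equation:
\begin{itemize}
\item[(a)] the term $\ip{\adstar_u v,v}_{L^2}=\ip{v,\ad_u v}_{L^2}$ is bounded by $C\norm{u}_{H^s}\norm{v}_{L^2}^2$, because $\ad_u v=\nabla_v u-\nabla_u v$ and $\ip{v,\nabla_u v}_{L^2}=0$ for divergence-free $u$;
\item[(b)] the term $\norm{\adstar_v u}_{L^2}\le C\norm{u}_{H^s}\norm{v}_{L^2}$ by Lemma~\ref{algebra coadjoint estimate}.
\end{itemize}
Gronwall then gives
\begin{equation*}
\norm{v(t)-w}_{L^2}\le c(\varepsilon)\norm{w}_{L^2},\qquad c(\varepsilon)\to0.
\end{equation*}
For the $y$-equation, the same cancellation gives $\abs{\ip{\ad_u y,y}_{L^2}}\le C\varepsilon\norm{y}_{L^2}^2$, so $\norm{y(t)}_{L^2}\lesssim\norm{w}_{L^2}$, and moreover
\begin{equation*}
\norm{y(1)-w}_{L^2}\le\int_0^1\Bigl(\norm{v-w}_{L^2}+\norm{\ad_u y}_{L^2}\Bigr)\,dt.
\end{equation*}

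The main obstacle is that $\ad_u y$ contains $\nabla_u y$, which is not bounded in $L^2$ by $\norm{y}_{L^2}$; so the crude estimate of $\norm{\ad_u y}_{L^2}$ above fails. To get around this I would instead work with the left-translated quantity
\begin{equation*}
z(t)=\Ad_{\geod(t)}^{-1}y(t)=D\geod(t)^{-1}J(t),
\end{equation*}
which, by the Lie group identity for the derivative of $\Ad$ along a flow, satisfies $\dot z=\Ad_{\geod(t)}^{-1}v(t)$. This equation involves no derivatives of $z$. Since $D\geod(t)$ and its inverse are $L^\infty$-close to $\Id$, uniformly for $u_0\in\normalball{\varepsilon}$, by smoothness of $\riemexp$ and the Sobolev lemma, we get
\begin{equation*}
\norm{\Ad_{\geod}^{-1}v-v}_{L^2}\le c(\varepsilon)\norm{v}_{L^2}.
\end{equation*}
Combining this with the estimate on $v-w$ yields $\norm{z(1)-w}_{L^2}\le c(\varepsilon)\norm{w}_{L^2}$. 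Finally, $J(1)=D\geod(1)z(1)$, so
\begin{equation*}
\norm{d\riemexp(u_0)w}_{L^2}=\norm{J(1)}_{L^2}\ge(1-c(\varepsilon))\norm{w}_{L^2},
\end{equation*}
which gives \eqref{surjectivity modulus estimate} with $\rho(\varepsilon)=1-c(\varepsilon)>0$ for small $\varepsilon$.

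A remaining technical point is justifying the linearized equations and Gronwall argument for $w$ only in $H^s$. These are legitimate because all quantities are smooth in the $H^s$ topology, and the estimates only use $L^2$ norms of $v,w$ against $H^s$ norms of $u$; the resulting bounds are uniform, so no density argument is needed.
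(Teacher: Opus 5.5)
There is a genuine gap. The two ``closeness to the identity'' estimates that your final bound rests on are both false uniformly in $w$, for the same reason you correctly flagged in the $y$-equation.

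First, $\norm{v(t)-w}_{L^2}\le c(\varepsilon)\norm{w}_{L^2}$ does not follow from (a)--(b). The cancellation in (a) controls $\ip{\adstar_u v,v}_{L^2}$, hence only $\norm{v}_{L^2}$. But $\partial_t(v-w)=-\adstar_u v-\adstar_v u$, and $\adstar_u v=\leray_e(\nabla_u v+Du^Tv)$ contains the transport term $\nabla_u v$, which cannot be tested against $v-w$ without losing a derivative.

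Second, $\Ad^{-1}_{\geod}v=D\geod^{-1}(v\circ\geod)$ contains a composition. The map $v\mapsto v\circ\geod$ is not close to the identity in $L^2$ operator norm, however $H^s$-close $\geod$ is to $e$; the $L^\infty$-closeness of $D\geod$ to $\Id$ only handles the matrix factor.

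A counterexample to both: take $u_0=\alpha e_2$ with $0<\alpha<\varepsilon$, so $\geod(t)$ is translation by $t\alpha e_2$, and take $w=\cos(2\pi kx_2)e_1$ with $2k\alpha\ge1$. Since $\adstar_v u_0=0$ and $\adstar_{u_0}v=\alpha\partial_2 v$, the linearized equation is pure transport, $v(t,x)=w(x-t\alpha e_2)$. At $t=1/(2k\alpha)\le 1$ one has $v(t)=-w$ and $\Ad^{-1}_{\geod(t)}v(t)=w$. Thus $\norm{v(t)-w}_{L^2}=2\norm{w}_{L^2}$ and $\norm{\Ad^{-1}_{\geod(t)}v(t)-v(t)}_{L^2}=2\norm{v(t)}_{L^2}$. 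The conclusion survives here ($z(t)=tw$ exactly) only because the two errors cancel, and an argument that estimates the factors separately cannot detect this.

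The missing idea is to make this cancellation explicit through the coadjoint conservation law \eqref{coadjoint conservation}, instead of an energy estimate for $v$. Differentiate $\Adstar_{\geod_\tau(t)}u_\tau(t)=u_0+\tau w$ in $\tau$, using $\partial_\tau\Ad_{\geod_\tau}|_{\tau=0}=\Ad_{\geod}\ad_z$. This gives $v=\Adstar_{\geod^{-1}}\bigl(w-\adstar_z u_0\bigr)$. Inserting this into your (correct) identity $\dot z=\Ad^{-1}_{\geod}v$ yields
\begin{equation*}
\dot z=\Ad^{-1}_{\geod}\bigl(\Adstar_{\geod}\bigr)^{-1}\bigl(w-K_{u_0}z\bigr)=\Lambda(t)^{-1}\bigl(w-K_{u_0}z\bigr),\qquad\Lambda(t)=\Adstar_{\geod(t)}\Ad_{\geod(t)}=\leray_e D\geod(t)^TD\geod(t),
\end{equation*}
with $K_{u_0}z=\adstar_z u_0$.

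The compositions have now cancelled. $\Lambda(t)$ is multiplication by a matrix $L^\infty$-close to $\Id$ followed by the Leray projection, so $\Lambda(t)^{\pm1}$ are genuinely $L^2$-operator-close to the identity on $\Ltwodiv$. Meanwhile $\norm{K_{u_0}z}_{L^2}\le C\varepsilon\norm{z}_{L^2}$ by Lemma~\ref{algebra coadjoint estimate}. This is precisely the integral equation \eqref{first derivative decomp} on which the paper's proof rests. Gr\"onwall then gives $\norm{z(t)}_{L^2}\lesssim\norm{w}_{L^2}$ and $\norm{z(1)-w}_{L^2}\le c(\varepsilon)\norm{w}_{L^2}$, after which your last step $J(1)=D\geod(1)z(1)$ goes through.
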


Before proceeding to the proof we recall the Lie geometric decomposition of the first derivative of the $L^2$ exponential map, cf. \cite{misiolek2010fredholm}. Let $u_0 \in \normalball{\varepsilon}$ with $\geod(t) = \riemexp tu_0$ and consider the operators
\begin{equation*}
    \Lambda(t)w = \Adstar_{\gamma(t)} \Ad_{\gamma(t)}w, \quad \text{and} \quad K_{u_0}w = \adstar_wu_0 \quad \text{for} \ w \in T_e\Dmu^s.
\end{equation*}
We will regard $\Lambda(t)$ and $K_{u_0}$ as operators on $\Ltwodiv$. As each $\gamma(t)$ is volume-preserving we have
\begin{equation*}
    \Lambda(t)=\leray_e D\gamma(t)^T D\gamma(t), \qquad \ip{\Lambda(t)w,w}_{L^2}=\norm{D\gamma(t)w}_{L^2}^2 \geq \frac{\norm{w}_{L^2}^2}{\norm{D\gamma(t)^{-1}}_{L^\infty}^2}.
\end{equation*}
Hence $\Lambda(t)$ is bounded, self-adjoint, coercive and has a bounded inverse on $\Ltwodiv$. Lemma~\ref{algebra coadjoint estimate} gives that
\begin{equation}\label{K_{u_0} estimate}
    \norm{K_{u_0}w}_{L^2} \leq C\norm{u_0}_{H^s}\norm{w}_{L^2} \quad \text{for} \ u_0, w \in T_e\Dmu^s
\end{equation}
and consequently $K_{u_0}$ has a bounded extension to $\Ltwodiv$. The function
\begin{equation}\label{left-translated Jacobi}
    y : [0,1] \times T_e\Dmu^s \rightarrow \Ltwodiv \ ; \ (t, w_0) \mapsto D\gamma(t)^{-1}d\riemexp(tu_0)tw_0
\end{equation}
satisfies the following integral equation in $\Ltwodiv$
\begin{equation}\label{first derivative decomp}
    y(t, w_0) = \int_0^t \Lambda(\tau)^{-1}w_0 \, d\tau - \int_0^t\Lambda(\tau)^{-1} K_{u_0} y(\tau, w_0)\, d\tau.
\end{equation}
\begin{proof}[Proof of Proposition \ref{surjectivity modulus}]
    For sufficiently small $\varepsilon>0$, the curve $t \mapsto \riemexp tu_0$ remains uniformly close to the identity in $C^1$. Hence, from \eqref{left-translated Jacobi}, we need only prove an analogous estimate for $y$. Shrinking $\varepsilon>0$ if necessary we have that
    \begin{equation}\label{uniform operator norm bounds}
        a(\varepsilon) = \sup_{\substack{u_0\in\normalball{\varepsilon}\\0\leq t\leq1}} \opnorm{\Lambda_{u_0}(t)^{-1}}{\Ltwodiv}, \qquad b(\varepsilon) = \sup_{\substack{u_0\in\normalball{\varepsilon}\\0\leq t\leq1}} \opnorm{\Ad_{\gamma_{u_0}(t)}}{\Ltwodiv},
    \end{equation}
    are finite uniform constants depending only on $\varepsilon$, with $n$ and $s$ fixed. Here $\gamma_{u_0}(t)=\riemexp(tu_0)$ and $\Lambda_{u_0}(t)$ denotes the corresponding operator. The bounds follow from uniform $C^1$ closeness to the identity; in particular both constants remain bounded as $\varepsilon\to0$. Combining this with the decomposition \eqref{first derivative decomp} and \eqref{K_{u_0} estimate} we have
    \begin{equation*}
        \norm{y(t,w_0)}_{L^2} \leq a(\varepsilon) t \norm{w_0}_{L^2} + C a(\varepsilon) \norm{u_0}_{H^s} \int_0^t \norm{y(\tau, w_0)}_{L^2} \, d\tau.
    \end{equation*}
    By Gr\"onwall we have
    \begin{equation}\label{y upper bound}
        \norm{y(t, w_0)}_{L^2} \leq a(\varepsilon)t e^{C a(\varepsilon) \norm{u_0}_{H^s} t}\norm{w_0}_{L^2}.
    \end{equation}
    Hence, as $\ip{\Lambda(t)^{-1}w_0, w_0}_{L^2} = \norm{\bigl(\Adstar_{\gamma(t)}\bigr)^{-1}w_0}_{L^2}^2 \geq \frac{1}{b(\varepsilon)^2}\norm{w_0}_{L^2}^2$, we estimate
    \begin{align*}
        \norm{y(1, w_0)}_{L^2} &\geq \norm{\int_0^1 \Lambda(t)^{-1}w_0 \, dt}_{L^2} - \int_0^1 \norm{\Lambda(t)^{-1} K_{u_0} y(t, w_0)}_{L^2} \, dt \\
        &\geq \biggl( \frac{1}{b(\varepsilon)^2} - Ca(\varepsilon)^2\norm{u_0}_{H^s}e^{Ca(\varepsilon)\norm{u_0}_{H^s}} \biggr) \norm{w_0}_{L^2}.
    \end{align*}
    Taking $\varepsilon>0$ sufficiently small yields the result.
\end{proof}

The estimate \eqref{y upper bound} and the uniform $L^\infty$ bound on $D\gamma(1)$ further give
\begin{equation*}
    \norm{d\riemexp(u_0)(w)}_{L^2}\leq C\norm{w}_{L^2}, \quad w \in T_e\Dmu^s.
\end{equation*}
Hence, by density, $d\riemexp(u_0)$ extends uniquely to a bounded operator from $\Ltwodiv$ to $L^2(\T^n,\R^n)$, uniformly for $u_0\in\normalball{\varepsilon}$. The next required ingredient provides control on the variation in operator norm of this extension.

\begin{proposition}\label{operator continuity of differential}
    Let $\frac{n}{2} + 1 < s' < s$. There exists $\varepsilon>0$ and $C>0$ such that
    \begin{equation}\label{operator continuity estimate}
        \opnorm{d\riemexp(u) - d\riemexp(v)}{\Ltwodiv \rightarrow L^2} \leq C \norm{u-v}_{H^{s'}}
    \end{equation}
    for all $u, v \in \normalball{\varepsilon}$ where the target tangent spaces are identified by the canonical trivialization of the tangent bundle of $\T^n$. 
\end{proposition}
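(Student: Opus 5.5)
The plan is to work with the left-translated Jacobi field representation \eqref{left-translated Jacobi}. For $u_0\in\normalball{\varepsilon}$ we have
\begin{equation*}
d\riemexp(u_0)w_0 = D\gamma_{u_0}(1)\,y_{u_0}(1,w_0).
\end{equation*}
Hence
\begin{equation*}
d\riemexp(u)w-d\riemexp(v)w=\bigl(D\gamma_u(1)-D\gamma_v(1)\bigr)y_u(1,w)+D\gamma_v(1)\bigl(y_u(1,w)-y_v(1,w)\bigr).
\end{equation*}
The first term is controlled by $\norm{D\gamma_u(1)-D\gamma_v(1)}_{L^\infty}\norm{y_u(1,w)}_{L^2}$. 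Here we use the Gr\"onwall bound \eqref{y upper bound} for $y_u$. We also use smoothness of $\riemexp$ in the $H^{s'}$ topology on $\normalball[s']{\varepsilon'}\supset\normalball{\varepsilon}$, together with the Sobolev embedding $H^{s'-1}\hookrightarrow L^\infty$. This gives $\norm{\gamma_u(t)-\gamma_v(t)}_{H^{s'}}\leq C\norm{u-v}_{H^{s'}}$ uniformly in $t\in[0,1]$, and so the first term is bounded by $C\norm{u-v}_{H^{s'}}\norm{w}_{L^2}$.

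For the second term I subtract the integral equations \eqref{first derivative decomp} for $u$ and $v$. Writing $z(t)=y_u(t,w)-y_v(t,w)$, we get
\begin{equation*}
z(t)=\int_0^t\bigl(\Lambda_u^{-1}-\Lambda_v^{-1}\bigr)(w-K_u y_u)\,d\tau-\int_0^t\Lambda_v^{-1}(K_u-K_v)y_u\,d\tau-\int_0^t\Lambda_v^{-1}K_v z\,d\tau.
\end{equation*}
I then need three operator bounds on $\Ltwodiv$.

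\textbf{(a) Bound on $\Lambda_u^{-1}-\Lambda_v^{-1}$.} Using $\Lambda^{-1}_u-\Lambda^{-1}_v=\Lambda_u^{-1}(\Lambda_v-\Lambda_u)\Lambda_v^{-1}$ and $\Lambda=\leray_e D\gamma^TD\gamma$, the operator norm of $\Lambda_u(t)-\Lambda_v(t)$ is at most $C\norm{D\gamma_u(t)-D\gamma_v(t)}_{L^\infty}\leq C\norm{u-v}_{H^{s'}}$. Combined with the uniform constant $a(\varepsilon)$ from \eqref{uniform operator norm bounds}, this gives $\opnorm{\Lambda_u^{-1}-\Lambda_v^{-1}}{\Ltwodiv}\leq C\norm{u-v}_{H^{s'}}$.

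\textbf{(b) Bound on $K_u-K_v$.} By linearity $K_u-K_v=K_{u-v}$. Lemma~\ref{algebra coadjoint estimate}, whose proof uses only $\norm{\nabla v}_{L^\infty}$ and hence holds with $H^{s'}$ in place of $H^s$, gives $\norm{K_{u-v}y}_{L^2}\leq C\norm{u-v}_{H^{s'}}\norm{y}_{L^2}$.

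\textbf{(c) Bound on $K_v$.} This is \eqref{K_{u_0} estimate}, giving $\opnorm{K_v}{\Ltwodiv}\leq C\varepsilon$.

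Inserting these bounds and \eqref{y upper bound} yields
\begin{equation*}
\norm{z(t)}_{L^2}\leq C\norm{u-v}_{H^{s'}}\norm{w}_{L^2}+C\int_0^t\norm{z(\tau)}_{L^2}\,d\tau.
\end{equation*}
Gr\"onwall then gives $\norm{z(1)}_{L^2}\leq C\norm{u-v}_{H^{s'}}\norm{w}_{L^2}$. The uniform $L^\infty$ bound on $D\gamma_v(1)$ finishes the second term. The estimate holds first for $w\in T_e\Dmu^s$ and passes to $\Ltwodiv$ by density, since all the bounds are in $L^2$.

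I expect the main obstacle to be justifying that the Lagrangian quantities depend Lipschitz-continuously on $u$ in the weaker $H^{s'}$ norm, uniformly for $u\in\normalball{\varepsilon}$. Concretely, this means the bound $\sup_t\norm{D\gamma_u(t)-D\gamma_v(t)}_{L^\infty}\leq C\norm{u-v}_{H^{s'}}$. I would deduce it from the mean value theorem applied to the smooth $H^{s'}$ exponential map on the convex ball $\normalball[s']{\varepsilon'}$. To make $C$ uniform, I would shrink $\varepsilon'$ so that $d\riemexp$ (including the $t$-dependence) is bounded on it. The subtlety that the identification of tangent spaces uses the canonical trivialization is harmless on $\T^n$, since all fields are treated as $\R^n$-valued functions.
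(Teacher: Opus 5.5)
Your proposal is correct and follows essentially the same route as the paper: the same splitting of $D\gamma_u(1)y_u(1,w)-D\gamma_v(1)y_v(1,w)$, then the same three operator bounds on $\Lambda_u^{-1}-\Lambda_v^{-1}$, on $K_u-K_v$ (the coadjoint estimate at regularity $s'$) and on $K_v$, closed by Gr\"onwall and density. Your grouping of the two $\Lambda_u^{-1}-\Lambda_v^{-1}$ terms into $(\Lambda_u^{-1}-\Lambda_v^{-1})(w-K_uy_u)$ only rearranges the paper's three-term decomposition.
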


\begin{proof}
    Choose $\varepsilon>0$ sufficiently small so that the $L^2$ exponential map is a smooth diffeomorphism onto its image on $\normalball{\varepsilon}$ and $\normalball[s']{\varepsilon}$ in the $H^s$ and $H^{s'}$ topologies, respectively. Shrinking $\varepsilon$ further if necessary, continuity at the origin ensures that its differential is uniformly bounded in the $H^{s'}$ operator norm on $\normalball[s']{\varepsilon}$. For $u \in \normalball{\varepsilon}$ and $w_0 \in T_e\Dmu^s$ we define $\gamma_u(t) = \riemexp tu$, $\Lambda_u(t) = \Adstar_{\gamma_u(t)}\Ad_{\gamma_u(t)}$ and $y_u(t, w_0) = D\gamma_u(t)^{-1} d\riemexp(tu)(tw_0)$. We may then decompose
    \begin{align*}
        \norm{d\riemexp(u)w_0 - d\riemexp(v)w_0}_{L^2} &= \norm{D\gamma_u(1) y_u(1, w_0) - D\gamma_v(1)y_v(1, w_0)}_{L^2} \\
        &\leq \norm{\big(D\gamma_u(1) - D\gamma_v(1)\big)y_u(1,w_0)}_{L^2} + \norm{D\gamma_v(1)\big(y_u(1,w_0)-y_v(1,w_0)\big)}_{L^2} \\
        &\leq \norm{D\gamma_u(1) - D\gamma_v(1)}_{L^\infty}\norm{y_u(1,w_0)}_{L^2} + \norm{D\gamma_v(1)}_{L^\infty}\norm{y_u(1,w_0)-y_v(1,w_0)}_{L^2}.
    \end{align*}
    By the smoothness of the Riemannian exponential map and the Sobolev lemma we have that
    \begin{equation*}
        \sup_{0\leq t\leq1}\norm{D\gamma_u(t)-D\gamma_v(t)}_{L^\infty}
        \leq C\norm{u-v}_{H^{s'}},
    \end{equation*}
    and, combining this with \eqref{y upper bound}, there exists a positive constant $C>0$ such that
    \begin{equation}\label{first derivative difference}
        \norm{d\riemexp(u)w_0 - d\riemexp(v)w_0}_{L^2} \leq C \norm{u-v}_{H^{s'}}\norm{w_0}_{L^2} + C\norm{y_u(1, w_0) - y_v(1,w_0)}_{L^2}.
    \end{equation}
    To estimate the final term we consider the differences of the decompositions given in \eqref{first derivative decomp}. For every $t\in[0,1]$ this gives
    \begin{equation}\label{y difference}
        y_u(t, w_0) - y_v(t,w_0) = \int_0^t \big(\Lambda_u(\tau)^{-1} - \Lambda_v(\tau)^{-1} \big) w_0 \, d\tau - \int_0^t \Bigl(\Lambda_u(\tau)^{-1} K_u y_u(\tau, w_0) - \Lambda_v(\tau)^{-1} K_v y_v(\tau, w_0)\Bigr) \, d\tau.
    \end{equation}
    Suppressing $\tau$ and $w_0$, we decompose the second integrand as
    \begin{equation*}
        \Lambda_u^{-1} K_u y_u - \Lambda_v^{-1} K_v y_v = \big( \Lambda_u^{-1} - \Lambda_v^{-1} \big)K_u y_u + \Lambda_v^{-1}\big( K_u - K_v\big)y_u + \Lambda_v^{-1} K_v \big( y_u -y_v \big).
    \end{equation*}
    Writing $\Lambda_u = \leray_e D\gamma_u^T D\gamma_u$ we estimate, again by smoothness of $L^2$ exponential map and the Sobolev lemma
    \begin{equation*}
        \opnorm{\Lambda_u(t) - \Lambda_v(t)}{\Ltwodiv} \leq \opnorm{D\gamma_u(t)^T D\gamma_u(t) - D\gamma_v(t)^T D\gamma_v(t)}{L^2} \leq C \norm{u-v}_{H^{s'}}
    \end{equation*}
    which, using \eqref{uniform operator norm bounds}, allows us to bound
    \begin{equation*}
        \opnorm{\Lambda_u(t)^{-1} - \Lambda_v(t)^{-1}}{\Ltwodiv} =\opnorm{\Lambda_u(t)^{-1} \big( \Lambda_v(t) - \Lambda_u(t)\big)\Lambda_v(t)^{-1}}{\Ltwodiv} \leq C\norm{u-v}_{H^{s'}}.
    \end{equation*}
    Further note that the proof of \eqref{K_{u_0} estimate}, applied at regularity $s'>\frac n2+1$ to $u-v$, gives
    \begin{equation*}
        \opnorm{K_u - K_v}{\Ltwodiv} \leq C\norm{u-v}_{H^{s'}}.
    \end{equation*}
    Combining all of the above, uniformly for $t\in[0,1]$, we obtain
    \begin{equation*}
        \norm{y_u(t, w_0) - y_v(t, w_0)}_{L^2} \leq C\norm{u-v}_{H^{s'}} \norm{w_0}_{L^2} + C \int_0^t \norm{y_u(\tau, w_0) - y_v(\tau, w_0)}_{L^2} \, d\tau.
    \end{equation*}
    Applying Gr{\"o}nwall's inequality then yields
    \begin{equation*}
        \norm{y_u(1, w_0) - y_v(1, w_0)}_{L^2} \leq C \norm{u - v}_{H^{s'}} \norm{w_0}_{L^2}.
    \end{equation*}
    The proposition then follows from \eqref{first derivative difference} and the density of $T_e\Dmu^s$ in $\Ltwodiv$.
\end{proof}

\begin{corollary}\label{mixed second derivative estimate}
Let $\frac n2+1<s'<s$. There exist $\varepsilon>0$ and $C>0$ such that
\begin{equation}\label{mixed second derivative bound}
    \norm{d^2\riemexp(u)(v,w)}_{L^2} \leq C\norm{v}_{H^{s'}}\norm{w}_{L^2}
\end{equation}
for all $u\in\normalball{\varepsilon}$ and $v,w\in T_e\Dmu^s(\T^n)$. Here $d^2\riemexp$ denotes the ordinary second derivative in canonical lift coordinates, rather than the covariant Hessian.
\end{corollary}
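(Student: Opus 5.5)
The estimate follows by differentiating the operator-Lipschitz bound of Proposition~\ref{operator continuity of differential} along a line. Fix $u\in\normalball{\varepsilon}$ and $v,w\in T_e\Dmu^s$. Since $\riemexp$ is smooth in the $H^s$ topology, $d^2\riemexp(u)(v,w)$ exists as an $H^s$ limit and hence as an $L^2$ limit:
\begin{equation*}
d^2\riemexp(u)(v,w)=\lim_{h\to0}\frac{1}{h}\bigl(d\riemexp(u+hv)w-d\riemexp(u)w\bigr).
\end{equation*}
Here differences of tangent vectors are taken in canonical lift coordinates, exactly as in Proposition~\ref{operator continuity of differential}. For $\abs{h}$ small we have $u+hv\in\normalball{\varepsilon}$, because $\normalball{\varepsilon}$ is $H^s$-open.

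Applying \eqref{operator continuity estimate} to the pair $u+hv$ and $u$, with $w\in T_e\Dmu^s\subset\Ltwodiv$, gives
\begin{equation*}
\Bignorm{\frac{1}{h}\bigl(d\riemexp(u+hv)w-d\riemexp(u)w\bigr)}_{L^2}\leq \frac{C}{\abs{h}}\norm{hv}_{H^{s'}}\norm{w}_{L^2}=C\norm{v}_{H^{s'}}\norm{w}_{L^2}.
\end{equation*}
Letting $h\to0$ and using that the $L^2$ norm is continuous under $H^s$ convergence yields \eqref{mixed second derivative bound}. The constant $C$ and the radius $\varepsilon$ are those of Proposition~\ref{operator continuity of differential}, and both are uniform in $u$.

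The only point needing care is consistency of coordinates. The first derivative is valued in $T_{\riemexp(u+hv)}\Dmu^s$, which varies with $h$. In canonical lift coordinates, however, $\riemexp$ is a smooth map from $\normalball{\varepsilon}$ into $H^s(\T^n,\R^n)$. Its ordinary derivatives are then ordinary Fréchet derivatives, and the trivialization in Proposition~\ref{operator continuity of differential} is precisely this identification. Under it, the difference quotient above is the one defining the ordinary second derivative $d^2\riemexp$ rather than a covariant Hessian.

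By symmetry of ordinary second derivatives, the same argument also gives the bound with the roles of $v$ and $w$ exchanged.
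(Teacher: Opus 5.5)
Your proposal is correct and is essentially the paper's own proof. You form the difference quotient of $d\riemexp$ along $u+rv$, bound it uniformly in $L^2$ by $C\norm{v}_{H^{s'}}\norm{w}_{L^2}$ via Proposition~\ref{operator continuity of differential}, and pass to the limit using its $H^s$ (hence $L^2$) convergence to $d^2\riemexp(u)(v,w)$; your closing symmetry remark is a harmless, correct addition.
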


\enlargethispage{4\baselineskip}

\begin{proof}
Choose $\varepsilon>0$ and $C>0$ as in Proposition~\ref{operator continuity of differential}. Fix $u\in\normalball{\varepsilon}$ and $v,w\in T_e\Dmu^s$. For all sufficiently small $\abs r$ we have $u+rv\in\normalball{\varepsilon}$, and smoothness in canonical lift coordinates gives
\begin{equation*}
    \frac{\bigl(d\riemexp(u+rv)-d\riemexp(u)\bigr)(w)}{r} \rightarrow d^2\riemexp(u)(v,w) \quad\text{in }H^s,
\end{equation*}
and hence in $L^2$, as $r\to0$. Moreover, Proposition~\ref{operator continuity of differential} gives, for all sufficiently small nonzero $r$,
\begin{equation*}
    \norm{\frac{\bigl(d\riemexp(u+rv)-d\riemexp(u)\bigr)(w)}{r}}_{L^2} \leq C\frac{\norm{rv}_{H^{s'}}}{\abs r}\norm{w}_{L^2} =C\norm{v}_{H^{s'}}\norm{w}_{L^2}.
\end{equation*}
Passing to the limit proves \eqref{mixed second derivative bound}.
\end{proof}

We now record an $L^2$ Taylor expansion for the Riemannian exponential map. In order to consider differences of diffeomorphisms within $\normalhood{e}{\varepsilon}$ we identify each element with its previously described canonical displacement. We will however, for the sake of exposition, suppress this notation in this section. When $d\riemexp(u)$ is regarded as an $H^s$- or $L^2$-valued operator, it denotes the intrinsic differential represented as an $\R^n$-valued field under the canonical trivialization of the tangent bundle of $\T^n$.

\begin{corollary}\label{L^2 Taylor expansion}
    Let $\frac{n}{2}+1 < s' < s$. There exists $\varepsilon>0$ and $C>0$ such that for all $u_0, u_0 + h \in \normalball{\varepsilon}$ we have
    \begin{equation}\label{L^2 Taylor estimate}
        \norm{\riemexp(u_0 + h) - \riemexp(u_0) - d\riemexp(u_0)h}_{L^2} \leq C \norm{h}_{H^{s'}}\norm{h}_{L^2}.
    \end{equation}
    In particular, if $\norm{h}_{L^2} \to 0$ with $u_0 + h \in \normalball{\varepsilon}$, then
    \begin{equation*}
        \norm{\riemexp(u_0 + h) - \riemexp(u_0) - d\riemexp(u_0)h}_{L^2} = o\big(\norm{h}_{L^2}\big).
    \end{equation*}
\end{corollary}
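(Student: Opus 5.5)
The plan is to write the remainder as an integral of first-derivative differences along the straight segment from $u_0$ to $u_0+h$, and then apply the operator-norm continuity of Proposition~\ref{operator continuity of differential}. First choose $\varepsilon>0$ as in Proposition~\ref{operator continuity of differential}. Since $\normalball{\varepsilon}$ is convex, for $u_0,u_0+h\in\normalball{\varepsilon}$ the whole segment $u_0+rh$, $0\le r\le1$, lies in $\normalball{\varepsilon}$. In canonical lift coordinates the map $r\mapsto\riemexp(u_0+rh)$ is $C^1$ into $H^s$, hence into $L^2$. The fundamental theorem of calculus (Bochner integral in $H^s$, then embedded into $L^2$) gives
\begin{equation*}
\riemexp(u_0+h)-\riemexp(u_0)-d\riemexp(u_0)h=\int_0^1\bigl(d\riemexp(u_0+rh)-d\riemexp(u_0)\bigr)h\,dr.
\end{equation*}
One must check that the canonical displacements are compatible along the segment. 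This holds because every point stays in $\normalhood{e}{\varepsilon}$, where $\zeta_\eta(x)\in(-\tfrac12,\tfrac12)^n$, so the displacement depends continuously and hence smoothly on $r$, without integer jumps.

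Next, take $L^2$ norms and use Minkowski's inequality for the integral. Apply Proposition~\ref{operator continuity of differential} with $u=u_0+rh$ and $v=u_0$, with the operator applied to $h\in T_e\Dmu^s\subset\Ltwodiv$:
\begin{equation*}
\norm{\bigl(d\riemexp(u_0+rh)-d\riemexp(u_0)\bigr)h}_{L^2}\le C\norm{rh}_{H^{s'}}\norm{h}_{L^2}.
\end{equation*}
Integrating in $r$ gives the bound $\tfrac{C}{2}\norm{h}_{H^{s'}}\norm{h}_{L^2}$, which is \eqref{L^2 Taylor estimate}. Alternatively, one could use Corollary~\ref{mixed second derivative estimate} through a second-order Taylor formula with integral remainder, $\int_0^1(1-r)\,d^2\riemexp(u_0+rh)(h,h)\,dr$, bounding the integrand by $C\norm{h}_{H^{s'}}\norm{h}_{L^2}$. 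Both routes give the same estimate.

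For the ``in particular'' statement, note that $u_0,u_0+h\in\normalball{\varepsilon}$ forces $\norm{h}_{H^{s'}}\le C\norm{h}_{H^s}<2C\varepsilon$. So the bound only yields $O(\norm{h}_{L^2})$ directly, not $o(\norm{h}_{L^2})$. To get $o$, interpolate: by the Sobolev interpolation inequality
\begin{equation*}
\norm{h}_{H^{s'}}\le\norm{h}_{L^2}^{1-s'/s}\norm{h}_{H^s}^{s'/s}\le(2\varepsilon)^{s'/s}\norm{h}_{L^2}^{1-s'/s},
\end{equation*}
which tends to $0$ as $\norm{h}_{L^2}\to0$ because $s'<s$. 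Hence the right-hand side of \eqref{L^2 Taylor estimate} is $O\bigl(\norm{h}_{L^2}^{2-s'/s}\bigr)=o(\norm{h}_{L^2})$.

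The main point requiring care is justifying the integral identity in the mixed $L^2$/lift setting, namely that the intrinsic differential, expressed under the canonical trivialization, is the genuine derivative of the displacement map. Beyond that, the strict inequality $s'<s$, used in the interpolation step, is what upgrades the estimate to little-$o$.
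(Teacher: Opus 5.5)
Your proposal is correct and follows the paper's proof essentially step for step. It uses the same integral identity along the segment in the convex ball $\normalball{\varepsilon}$, applies Proposition~\ref{operator continuity of differential} to $d\riemexp(u_0+rh)-d\riemexp(u_0)$, and then uses Sobolev interpolation together with the uniform $H^s$ bound on $h$ to obtain the little-$o$ statement (you even keep the extra factor $r$, giving the constant $C/2$).
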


\begin{proof}
    As $\normalball{\varepsilon}$ is convex we have
    \begin{equation*}
        \riemexp(u_0 + h) - \riemexp(u_0) - d\riemexp(u_0)h = \int_0^1 \big(d\riemexp(u_0 + \theta h) - d\riemexp(u_0)\big)h \, d\theta.
    \end{equation*}
    The inequality \eqref{L^2 Taylor estimate} then follows by Proposition \ref{operator continuity of differential}. Since $u_0$ and $u_0+h$ lie in the fixed $H^s$ ball, $h$ remains bounded in $H^s$. The asserted little-$o$ statement then follows from Sobolev interpolation
    \begin{equation*}
        \norm{h}_{H^{s'}} \leq C \norm{h}_{H^s}^{\frac{s'}{s}} \norm{h}_{L^2}^{1 - \frac{s'}{s}}.
    \end{equation*}
\end{proof}

Next we establish an $L^2$-Lipschitz estimate for the inverse of the Riemannian exponential map.

\begin{lemma}\label{L^2 continuity of inverse exponential map}
There exists $\varepsilon>0$ and $C>0$ such that
\begin{equation}\label{L^2 inverse exponential estimate}
    \norm{u-v}_{L^2}\leq C\norm{\riemexp u-\riemexp v}_{L^2}
\end{equation}
for all $u,v\in\normalball{\varepsilon}$. In particular,
\begin{equation*}
    \riemexp^{-1}:\normalhood{e}{\varepsilon}\to\normalball{\varepsilon}
\end{equation*}
is Lipschitz with respect to the ambient $L^2$ topologies.
\end{lemma}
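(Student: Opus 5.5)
The plan is to combine the $L^2$ Taylor expansion of Corollary~\ref{L^2 Taylor expansion} with the uniform lower bound of Proposition~\ref{surjectivity modulus}. Fix $u,v\in\normalball{\varepsilon}$ and set $h=u-v$. Since $\normalball{\varepsilon}$ is convex, Corollary~\ref{L^2 Taylor expansion} applied at the base point $v$ gives
\begin{equation*}
\norm{\riemexp u-\riemexp v-d\riemexp(v)h}_{L^2}\leq C\norm{h}_{H^{s'}}\norm{h}_{L^2}.
\end{equation*}
Proposition~\ref{surjectivity modulus} gives $\norm{d\riemexp(v)h}_{L^2}\geq\rho(\varepsilon)\norm{h}_{L^2}$. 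Combining these with the triangle inequality yields
\begin{equation*}
\norm{\riemexp u-\riemexp v}_{L^2}\geq\bigl(\rho(\varepsilon)-C\norm{h}_{H^{s'}}\bigr)\norm{h}_{L^2}.
\end{equation*}

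Next we make the bracket positive. We have $\norm{h}_{H^{s'}}\leq\norm{h}_{H^s}<2\varepsilon$, so it suffices to have $2C\varepsilon<\rho(\varepsilon)/2$. The key point is that $\rho(\varepsilon)$ does not degenerate as $\varepsilon\to0$. The proof of Proposition~\ref{surjectivity modulus} gives
\begin{equation*}
\rho(\varepsilon)\gtrsim \frac{1}{b(\varepsilon)^2}-Ca(\varepsilon)^2\varepsilon e^{Ca(\varepsilon)\varepsilon}
\end{equation*}
(up to the harmless $D\gamma(1)$ factor), with $a(\varepsilon)$ and $b(\varepsilon)$ bounded as $\varepsilon\to0$. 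Hence $\rho(\varepsilon)\geq\rho_0>0$ uniformly for small $\varepsilon$. The constant $C$ in Corollary~\ref{L^2 Taylor expansion} likewise stays bounded under shrinking, since it comes from Proposition~\ref{operator continuity of differential}. Shrinking $\varepsilon$ therefore gives $\norm{u-v}_{L^2}\leq (2/\rho_0)\norm{\riemexp u-\riemexp v}_{L^2}$.

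The main obstacle is exactly this bookkeeping of constants. We must check that shrinking $\varepsilon$ does not inflate the Taylor constant or collapse $\rho$; both follow from monotonicity of the relevant suprema in $\varepsilon$. We must also interpret the differences $\riemexp u-\riemexp v$ via canonical displacements, which is consistent once $\varepsilon$ is small enough that all lifts are compatible.

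If one prefers to avoid the $\varepsilon$-smallness trade-off, an alternative is a local-to-global argument. Apply the above only for $\norm{h}_{H^{s'}}<\rho_0/(2C)$, and note that in a small enough ball this condition holds automatically. The Lipschitz statement for $\riemexp^{-1}$ on $\normalhood{e}{\varepsilon}$ is then just a restatement of the inequality.
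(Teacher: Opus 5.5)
Your proposal is correct and takes the same route as the paper. You combine the Taylor expansion of Corollary~\ref{L^2 Taylor expansion} with the lower bound of Proposition~\ref{surjectivity modulus}, and absorb the remainder using $\norm{h}_{H^{s'}}\lesssim\varepsilon$. The paper avoids your discussion of whether $\rho(\varepsilon)$ degenerates in the same way as your monotonicity remark: it fixes $\varepsilon_0$, takes $\rho_0$ and the Taylor constant on $\normalball{\varepsilon_0}$, and reuses them unchanged on every smaller ball.
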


\begin{proof}
Fix $\varepsilon_0>0$ sufficiently small that Proposition~\ref{surjectivity modulus} and Corollary~\ref{L^2 Taylor expansion} hold on $\normalball{\varepsilon_0}$. Let $\rho_0>0$ denote the constant in \eqref{surjectivity modulus estimate} on this ball. For $u,v\in\normalball{\varepsilon}$, where $\varepsilon\leq\varepsilon_0$, set $h=v-u$. By Corollary~\ref{L^2 Taylor expansion},
\begin{equation*}
    \riemexp v-\riemexp u=d\riemexp(u)h+R(u,h),
\end{equation*}
where
\begin{equation*}
    \norm{R(u,h)}_{L^2}\leq C\norm{h}_{H^{s'}}\norm{h}_{L^2}.
\end{equation*}
Since $u,v\in\normalball{\varepsilon}$, the continuous embedding $H^s\hookrightarrow H^{s'}$ gives
\begin{equation*}
    \norm{h}_{H^{s'}}\leq C\norm{h}_{H^s}\leq2C\varepsilon.
\end{equation*}
Consequently, Proposition~\ref{surjectivity modulus} gives
\begin{equation*}
    \norm{\riemexp v-\riemexp u}_{L^2}\geq\bigl(\rho_0-C\varepsilon\bigr)\norm{h}_{L^2}.
\end{equation*}
Shrinking $\varepsilon>0$ so that the error is at most $\rho_0/2$, we obtain
\begin{equation*}
    \norm{\riemexp v-\riemexp u}_{L^2}\geq\frac{\rho_0}{2}\norm{u-v}_{L^2},
\end{equation*}
which yields the result.
\end{proof}

We are now ready to prove the main result of this section.

\begin{proof}[Proof of Theorem~\ref{Whitney extension of squared distance}]
Fix $\frac n2+1<s'<s$ and choose $\varepsilon>0$ sufficiently small that the preceding estimates hold. Applying Proposition~\ref{operator continuity of differential} with one basepoint equal to $0$ gives a uniform bound
\begin{equation*}
    \norm{d\riemexp(u)w}_{L^2}\leq C\norm{w}_{L^2},
    \qquad u\in\normalball{\varepsilon},\quad w\in T_e\Dmu^s.
\end{equation*}
Let $\eta=\riemexp u,$ $\xi=\riemexp v$ and $h=v-u$ where $u,v\in\normalball{\varepsilon}$. Since the ball is convex, $u + \theta h \in\normalball{\varepsilon}$ for all $0 \leq \theta \leq 1$. Differentiating in canonical lift coordinates gives
\begin{equation*}
    \frac{d}{d\theta}\bigl(d\riemexp(u + \theta h)(u + \theta h)\bigr)
    =d^2\riemexp(u + \theta h)(h,u + \theta h)+d\riemexp(u + \theta h)(h).
\end{equation*}
By symmetry of $d^2\riemexp$ and Corollary~\ref{mixed second derivative estimate}, we have
\begin{equation*}
    \norm{d^2\riemexp(u + \theta h)(h,u + \theta h)}_{L^2} \leq C\norm{u + \theta h}_{H^{s'}}\norm{h}_{L^2} \leq C\norm{h}_{L^2}
\end{equation*}
and hence, integrating the preceding derivative and using \eqref{ambient differential}, we obtain
\begin{equation*}
    \opnorm{d_{L^2}\sqdistance(\riemexp(u + \theta h))-d_{L^2}\sqdistance(\eta)}{L^2} = 2\norm{d\riemexp(u + \theta h)(u + \theta h)-d\riemexp(u)u}_{L^2} \leq C\theta\norm{h}_{L^2}.
\end{equation*}
Taking $\theta=1$ and applying \eqref{L^2 inverse exponential estimate} gives
\begin{equation}\label{Whitney derivative estimate}
    \opnorm{d_{L^2}\sqdistance(\xi)-d_{L^2}\sqdistance(\eta)}{L^2}
    \leq C\norm{\xi-\eta}_{L^2}.
\end{equation}

Furthermore, we have
\begin{equation*}
    \sqdistance(\xi)-\sqdistance(\eta)-d_{L^2}\sqdistance(\eta)(\xi-\eta) = \int_0^1
    \bigl(d_{L^2}\sqdistance(\riemexp(u + \theta h))-d_{L^2}\sqdistance(\eta)\bigr)
    \bigl(d\riemexp(u + \theta h)(h)\bigr)\,d\theta.
\end{equation*}
The preceding bounds then give
\begin{equation*}
    \abs{\sqdistance(\xi)-\sqdistance(\eta)-d_{L^2}\sqdistance(\eta)(\xi-\eta)}
    \leq C\int_0^1\theta\norm{h}_{L^2}^2\,d\theta
    \leq C\norm{h}_{L^2}^2.
\end{equation*}
Another application of \eqref{L^2 inverse exponential estimate} yields
\begin{equation}\label{Whitney function estimate}
    \abs{\sqdistance(\xi)-\sqdistance(\eta)-d_{L^2}\sqdistance(\eta)(\xi-\eta)}
    \leq C\norm{\xi-\eta}_{L^2}^{2}.
\end{equation}
The estimates \eqref{Whitney derivative estimate} and \eqref{Whitney function estimate} are the Whitney-Glaeser conditions with modulus $\omega(t)=t$. The Hilbert-space Whitney extension theorem \cite[Theorem~2]{johanis2024whitney}, which applies to arbitrary subsets of the ambient space, then gives a function
\begin{equation*}
    \widetilde{\Phi}\in C^{1,1}\bigl(L^2(\T^n,\R^n)\bigr)
\end{equation*}
whose value and differential on $\normalhood{e}{\varepsilon}$ are $\sqdistance$ and $d_{L^2}\sqdistance$, respectively. This establishes \eqref{squared distance differential} and completes the proof.
\end{proof}

The next result demonstrates that there is a limit to the regularity of possible extensions preserving the values and differential of the squared geodesic distance.

\begin{proposition}\label{no C4 extension with prescribed differential}
There is no $\varepsilon>0$ for which the squared geodesic distance
\begin{equation*}
    \sqdistance:\normalhood{e}{\varepsilon}\rightarrow\R, \qquad \sqdistance(\eta)=\geodistance(e,\eta)^2,
\end{equation*}
admits a $C^4$ extension $\widetilde{\Phi}$ to an $L^2$-open set containing $\normalhood{e}{\varepsilon}$ such that
\begin{equation}\label{C4 prescribed differential}
    d\widetilde{\Phi}(\eta)(h) = d_{L^2}\sqdistance(\eta)(h) = 2\ip{d\riemexp(u)(u),h}_{L^2}, \qquad u=\riemexp^{-1}\eta,
\end{equation}
for every $\eta\in\normalhood{e}{\varepsilon}$ and every $h\in L^2(\T^n,\R^n)$. Here the normal neighborhood is identified with its canonical displacements in $L^2(\T^n,\R^n)$, and all derivatives are ordinary ambient Fr\'echet derivatives.
\end{proposition}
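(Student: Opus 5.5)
The plan is to argue by contradiction: assume a $C^4$ extension $\widetilde{\Phi}$ with prescribed differential \eqref{C4 prescribed differential} exists on an $L^2$-open set $\mathcal O\supset\normalhood{e}{\varepsilon}$. Then I will show that its ambient gradient forces an $L^2$ bound on the pressure-gradient quadratic map, which Proposition~\ref{local ratio unbounded} rules out.

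Let $G:\mathcal O\to L^2(\T^n,\R^n)$ be the ambient gradient of $\widetilde{\Phi}$. Then $G$ is $C^3$, so $dG$, $d^2G$ and $d^3G$ are bounded multilinear maps at $e$ in the $L^2$ topology, with $\norm{d^2G(e)(h,h)}_{L^2}\leq C\norm{h}_{L^2}^2$. Along a radial geodesic $\geod(t)=\riemexp(tu)$ we have $d\riemexp(tu)(tu)=t\dot\geod(t)$, so the prescription gives the identity
\begin{equation*}
G(\geod(t))=2t\,\dot\geod(t),\qquad 0\leq t\leq1,\ u\in\normalball{\varepsilon}.
\end{equation*}
Both sides are smooth in $t$ as $L^2$-valued curves, since $\geod$ is smooth in $H^s$. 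Differentiating once at $t=0$ gives $dG(e)u=2u$ for all $u\in T_e\Dmu^s$. By density $dG(e)=2\,\Id$ on $\Ltwodiv$. Since $dG(e)=d^2\widetilde{\Phi}(e)$ is self-adjoint, it preserves the orthogonal complement $\nabla H^1$; write $B=dG(e)|_{\nabla H^1}$, a bounded self-adjoint operator. Differentiating twice at $t=0$ and using $\ddot\geod(0)=-\nabla p_u$ (with $p_u$ the pressure of $u$) gives
\begin{equation*}
d^2G(e)(u,u)=(B-4)\nabla p_u .
\end{equation*}
Combined with the $C^2$ bound this yields
\begin{equation*}
\norm{(4-B)\nabla p_u}_{L^2}\leq C\norm{u}_{L^2}^2\qquad\text{for all }u\in\normalball{\varepsilon}.
\end{equation*}

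Next I would test this on the explicit family $u_k$ from the proof of Proposition~\ref{local ratio unbounded}, together with its rescalings $\lambda u_k$, which stay in $\normalball{\varepsilon}$ for small $\lambda$ and for which the inequality is homogeneous of degree two. For this family $\norm{\nabla p_k}_{L^2}/\norm{u_k}_{L^2}^2=2\pi k\to\infty$. If $4-B$ were bounded below on the span of the $\nabla p_k$, we would immediately get a contradiction. The same applies if $4-B$ is bounded below on the gradient fields generated by suitable translates, rotations and linear combinations of the cellular flows $\symp\psi_k$: translates are again stationary Euler fields with pressure gradients of the same size, and the identity holds for each such $u$.

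The main obstacle is excluding degeneracy, namely $B\nabla p\approx 4\nabla p$ on the relevant high-frequency gradient fields. The $C^2$ information alone does not seem to forbid $B=4\,\Id$ on $\nabla H^1$. Here I would use the remaining smoothness.
\begin{itemize}
\item[(a)] Differentiate the identity $G(\geod(t))=2t\dot\geod(t)$ a third time at $t=0$. This relates $d^3G(e)(u,u,u)$ and $d^2G(e)(u,\nabla p_u)$ to $\dddot\geod(0)$, which equals $-(\nabla^2p_u)u-\nabla\partial_tp_u$.
\item[(b)] In the degenerate case $B=4$ on these modes, $d^2G(e)(u,u)$ vanishes along the family. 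The third-order identity then reads, modulo bounded terms of size $O(\norm{u}_{L^2}^3)$,
\begin{equation*}
\norm{\dddot\geod(0)+c\,\nabla p_u\text{-terms}}_{L^2}\leq C\norm{u}_{L^2}^3 .
\end{equation*}
\item[(c)] For $u=\lambda u_k$ the left-hand side grows like $k^2\norm{u}_{L^2}^3$, by the same explicit Fourier computation as in Proposition~\ref{local ratio unbounded}, applied to $(\nabla^2p_k)u_k$. This contradicts boundedness of $d^3G(e)$ and $d^2G(e)$ in $L^2$.
\end{itemize}
Thus either the second-order or the third-order expansion of $G$ contradicts the local $L^2$ boundedness of its derivatives guaranteed by $\widetilde{\Phi}\in C^4$. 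Since $\varepsilon>0$ was arbitrary and the family $\lambda u_k$ lies in every normal ball, no such extension exists for any $\varepsilon$.
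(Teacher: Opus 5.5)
Your second-order identity $d^2G(e)(u,u)=(B-4)\nabla p_u$ is correct, but the way you close the argument has a genuine gap.

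First, the case split is not exhaustive. If $4-B$ is not bounded below on the fields $\nabla p_k$, you only learn that $\norm{(4-B)\nabla p_k}_{L^2}=o(\norm{\nabla p_k}_{L^2})$ along a subsequence, not that $B=4$ on those modes. Step (b) treats only exact degeneracy.

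Second, and more seriously, step (b) mishandles the third-order identity
\begin{equation*}
d^3G(e)(u,u,u)-3\,d^2G(e)(\nabla p_u,u)+dG(e)w_0=6w_0,\qquad w_0=\gamma^{(3)}(0).
\end{equation*}
The mixed term $d^2G(e)(\nabla p_u,u)$ is not controlled by the vanishing of $d^2G(e)(u,u)$. On your family it is only $O(\norm{\nabla p_u}_{L^2}\norm{u}_{L^2})=O(k\norm{u}_{L^2}^3)$, not $O(\norm{u}_{L^2}^3)$. Moreover, with $P$ and $Q=\Id-P$ the Leray projections, $dG(e)w_0=2Pw_0+B\,Qw_0$ involves $B$ acting on the gradient part $Qw_0$. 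That part lives on different Fourier modes than $\nabla p_k$, so your degeneracy assumption does not constrain it; for example, $B=6$ there cancels it. So the size of $(\nabla^2p_k)u_k$ in (c) by itself contradicts nothing.

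The missing idea is to remove the unknown gradient block $B$ by testing against a divergence-free field. The paper pairs the third-order identity with $h=u_0$. Self-adjointness and $dG(e)u_0=2u_0$ give $\ip{dG(e)w_0,u_0}_{L^2}=2\ip{w_0,u_0}_{L^2}$. Differentiating $\norm{\dot\gamma(t)}_{L^2}^2=\mathrm{const}$ twice gives $\ip{w_0,u_0}_{L^2}=-\norm{\nabla p_{u_0}}_{L^2}^2$. Hence
\begin{equation*}
d^4\widetilde{\Phi}(e)(u_0,u_0,u_0,u_0)-3\,d^3\widetilde{\Phi}(e)(\nabla p_{u_0},u_0,u_0)=-4\norm{\nabla p_{u_0}}_{L^2}^2 .
\end{equation*}
With $C_j$ the norm of $d^j\widetilde{\Phi}(e)$, this gives $4\norm{\nabla p_{u_0}}_{L^2}^2\leq C_4\norm{u_0}_{L^2}^4+3C_3\norm{\nabla p_{u_0}}_{L^2}\norm{u_0}_{L^2}^2$. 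Then $\norm{\nabla p_k}_{L^2}=2\pi k\norm{u_k}_{L^2}^2$ yields $8\pi k\leq C_4/(2\pi k)+3C_3$, a contradiction whatever $B$ is. The second-order analysis and the dichotomy become unnecessary.

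Your route can also be repaired by applying $P$ to the third-order identity, which gives $4\norm{Pw_0}_{L^2}\leq C_4\norm{u}_{L^2}^3+3C_3\norm{\nabla p_u}_{L^2}\norm{u}_{L^2}$. Then you must check that the divergence-free part $P[(\nabla^2p_k)u_k]$, not merely $(\nabla^2p_k)u_k$, has size $\sim k^2\norm{u_k}_{L^2}^3$. This is true, but it is an extra Fourier computation that the constant-speed identity supplies for free.
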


\begin{proof}
Suppose that such an extension exists. Fix $u_0\in\normalball{\varepsilon}$ and let $\gamma(t)=\riemexp(tu_0)$ be the corresponding Euler geodesic, represented in canonical lift coordinates. For every fixed $h\in L^2(\T^n,\R^n)$ and all $t$ sufficiently close to zero, \eqref{C4 prescribed differential} gives
\begin{equation}\label{C4 radial differential}
    d\widetilde{\Phi}(\gamma(t))(h) = 2\ip{d\riemexp(tu_0)(tu_0),h}_{L^2} = 2t\ip{\dot\gamma(t),h}_{L^2}.
\end{equation}
In particular, $d\widetilde{\Phi}(e)(h)=0$. Differentiating \eqref{C4 radial differential} once gives
\begin{equation*}
    d^2\widetilde{\Phi}(\gamma(t))(\dot\gamma(t),h) = 2\ip{\dot\gamma(t),h}_{L^2} + 2t\ip{\ddot\gamma(t),h}_{L^2}.
\end{equation*}
Evaluating at $t=0$ yields
\begin{equation}\label{C4 second derivative identity}
    d^2\widetilde{\Phi}(e)(u_0,h)=2\ip{u_0,h}_{L^2}.
\end{equation}
Differentiating twice more and using symmetry of the ambient derivatives gives
\begin{equation*}
    d^4\widetilde{\Phi}(\gamma(t)) (\dot\gamma(t),\dot\gamma(t),\dot\gamma(t),h) + 3d^3\widetilde{\Phi}(\gamma(t)) (\ddot\gamma(t),\dot\gamma(t),h) + d^2\widetilde{\Phi}(\gamma(t))(\gamma^{(3)}(t),h) = 6\ip{\gamma^{(3)}(t),h}_{L^2} + 2t\ip{\gamma^{(4)}(t),h}_{L^2}.
\end{equation*}
Setting
\begin{equation*}
    v_0=\ddot\gamma(0)=-\nabla p(0), \qquad w_0=\gamma^{(3)}(0),
\end{equation*}
where $p$ is the pressure associated with $u_0$, at $t=0$, the preceding identity becomes
\begin{equation}\label{C4 fourth derivative identity}
    d^4\widetilde{\Phi}(e)(u_0,u_0,u_0,h) + 3d^3\widetilde{\Phi}(e)(v_0,u_0,h) + d^2\widetilde{\Phi}(e)(w_0,h) = 6\ip{w_0,h}_{L^2}.
\end{equation}

Since $\gamma$ has constant $L^2$ speed,
\begin{equation*}
    0=\frac12\frac{d^2}{dt^2}\norm{\dot\gamma(t)}_{L^2}^2 =\ip{\gamma^{(3)}(t),\dot\gamma(t)}_{L^2} + \norm{\ddot\gamma(t)}_{L^2}^2.
\end{equation*}
Consequently,
\begin{equation}\label{C4 constant speed identity}
    \ip{w_0,u_0}_{L^2}=-\norm{v_0}_{L^2}^2.
\end{equation}
By symmetry and \eqref{C4 second derivative identity}, with $h=w_0$,
\begin{equation*}
    d^2\widetilde{\Phi}(e)(w_0,u_0) =d^2\widetilde{\Phi}(e)(u_0,w_0) = 2\ip{u_0,w_0}_{L^2} = -2\norm{v_0}_{L^2}^2.
\end{equation*}
Taking $h=u_0$ in \eqref{C4 fourth derivative identity} therefore gives
\begin{equation*}
    d^4\widetilde{\Phi}(e)(u_0,u_0,u_0,u_0) + 3d^3\widetilde{\Phi}(e)(v_0,u_0,u_0) = -4\norm{v_0}_{L^2}^2.
\end{equation*}
Boundedness of the third and fourth ambient derivatives at $e$ now gives constants $C_3,C_4>0$, independent of $u_0$, such that
\begin{equation}\label{C4 obstruction estimate}
    4\norm{v_0}_{L^2}^2 \leq C_4\norm{u_0}_{L^2}^4 + 3C_3\norm{v_0}_{L^2}\norm{u_0}_{L^2}^2.
\end{equation}
Applying this to the initial conditions $\{u_k\} \subset \normalball{\varepsilon}$ constructed in the proof of Proposition \ref{local ratio unbounded} gives
\begin{equation*}
    8 \pi k \leq \frac{C_4}{2 \pi k} + 3C_3
\end{equation*}
for all $k \geq 1$; a contradiction.
\end{proof}

The preceding proposition rules out an ambient $C^4$ extension of the squared geodesic distance which preserves both its values and its prescribed ambient differential. Nevertheless, we expect analogous mixed Sobolev estimates to hold for higher derivatives of $\riemexp$. Lie theoretic decompositions of these higher-order derivatives provide a possible route to establishing such estimates, cf. \cite{heslin2026exact}.

Finally, we apply the estimates developed in this section to show that the intrinsic and ambient distances agree to first order at every point of a sufficiently small normal neighborhood.

\begin{corollary}
    There exists $\varepsilon>0$ such that for every fixed $\eta \in \normalhood{e}{\varepsilon}$ and every sequence $\{\eta_j\} \subset \normalhood{e}{\varepsilon}\setminus \{\eta\}$ with $\norm{\eta - \eta_j}_{L^2} \to 0$ we have
    \begin{equation}\label{local asymptotic ratio}
        \frac{\geodistance(\eta, \eta_j)}{\norm{\eta - \eta_j}_{L^2}} \to 1.
    \end{equation}
\end{corollary}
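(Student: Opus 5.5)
By right-invariance I will reduce to the base point at the identity. Write $\eta_j=\psi_j\circ\eta$ with $\psi_j=\eta_j\circ\eta^{-1}$. Then $\geodistance(\eta,\eta_j)=\geodistance(e,\psi_j)$, and, once the lifts are compatible as in the proof of Corollary~\ref{Phi Lipschitz}, volume preservation of $\eta$ gives $\norm{\eta_j-\eta}_{L^2}=\norm{\psi_j-e}_{L^2}$.

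The first thing to check is that $\psi_j$ stays in a fixed normal neighborhood where Theorem~\ref{local geodesic convexity} applies. Taking $\varepsilon$ small, continuity of the group operations puts $\psi_j$ in some $\normalhood{e}{r}$. Set $w_j=\riemexp^{-1}\psi_j\in\normalball{r}$. Theorem~\ref{local geodesic convexity} then gives $\geodistance(e,\psi_j)=\norm{w_j}_{L^2}$. The lower bound $\norm{\psi_j-e}_{L^2}\le\norm{w_j}_{L^2}$ is already Theorem~\ref{local Shnirelman inequality}, so only the matching upper bound remains. That is, it suffices to prove
\begin{equation*}
\norm{\riemexp(w_j)-\riemexp(0)-w_j}_{L^2}=o\bigl(\norm{w_j}_{L^2}\bigr),
\end{equation*}
since $d\riemexp(0)=\Id$ and the triangle inequality then gives $\norm{\psi_j-e}_{L^2}\ge(1-o(1))\norm{w_j}_{L^2}$.

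The key step is Corollary~\ref{L^2 Taylor expansion} applied with $u_0=0$ and $h=w_j$. It requires $\norm{w_j}_{L^2}\to0$, which follows from Lemma~\ref{L^2 continuity of inverse exponential map}:
\begin{equation*}
\norm{w_j}_{L^2}\le C\norm{\psi_j-e}_{L^2}=C\norm{\eta_j-\eta}_{L^2}\to0.
\end{equation*}
The radius $r$ must be small enough that this lemma and Corollary~\ref{L^2 Taylor expansion} both hold on $\normalball{r}$; I will fix $r$ first and then shrink $\varepsilon$. Since $w_j$ stays bounded in $H^s$ by $r$, Sobolev interpolation (as in the proof of Corollary~\ref{L^2 Taylor expansion}) gives $\norm{w_j}_{H^{s'}}\to0$. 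Hence the remainder is bounded by $C\norm{w_j}_{H^{s'}}\norm{w_j}_{L^2}=o(\norm{w_j}_{L^2})$, and the ratio in \eqref{local asymptotic ratio} is squeezed between $1$ and $(1-o(1))^{-1}$. Note $w_j\neq0$ because $\eta_j\neq\eta$.

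The main obstacle I expect is bookkeeping rather than analysis. One must check that the canonical-lift identification is compatible with right translation, so that $\norm{\eta_j-\eta}_{L^2}=\norm{\psi_j-e}_{L^2}$ holds exactly rather than only up to an integer shift. One must also make sure the smallness conditions are imposed in the right order: first $r$ for the Taylor and Lipschitz estimates, then $\varepsilon$ for the composition. An alternative that avoids translation would expand $\riemexp$ around $u=\riemexp^{-1}\eta$ directly and compare with $\geodistance(\eta,\eta_j)$. That route seems worse, since it needs the exact distance at a non-identity base point, which is again only available after translating to $e$.
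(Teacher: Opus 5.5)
Your proposal is correct and follows essentially the same route as the paper. You translate to the identity by right-invariance, use Lemma~\ref{L^2 continuity of inverse exponential map} to get $\norm{w_j}_{L^2}\to0$, and combine the $L^2$ Taylor expansion at $0$ (with $d\riemexp(0)=\Id$) with $\geodistance(e,\psi_j)=\norm{w_j}_{L^2}$; the paper only differs cosmetically, doing the identity case first and reading the two-sided bound $\norm{\psi_j-e}_{L^2}=(1+o(1))\norm{w_j}_{L^2}$ directly off the Taylor expansion instead of citing Theorem~\ref{local Shnirelman inequality} for one side.
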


\begin{proof}
    First assume that $\eta = e$ and $\eta_j = \riemexp u_j$. By Lemma~\ref{L^2 continuity of inverse exponential map} we have $\norm{u_j}_{L^2} \to 0$. Hence, by Corollary~\ref{L^2 Taylor expansion}, as $d\riemexp(0) = \mathrm{Id}$, we have
    \begin{equation*}
        \norm{\eta_j - e - u_j}_{L^2} = o\big(\norm{u_j}_{L^2}\big).
    \end{equation*}
    Consequently
    \begin{equation*}
        \norm{\eta_j - e}_{L^2} = \big( 1 + o(1) \big)\norm{u_j}_{L^2}
    \end{equation*}
    and the result follows for $\eta = e$.

    \newpage

    Finally, shrink $\varepsilon>0$ so that $\eta,\xi\in\normalhood{e}{\varepsilon}$ implies $\xi\circ\eta^{-1}$ lies in the normal neighborhood on which the preceding argument at the identity applies. For arbitrary $\eta\in\normalhood{e}{\varepsilon}$ and any sequence $\{\eta_j\}\subset\normalhood{e}{\varepsilon}\setminus\{\eta\}$ with $\norm{\eta-\eta_j}_{L^2}\to0$, the translated sequence $\xi_j=\eta_j\circ\eta^{-1}$ lies in that neighborhood. By right invariance and the volume-preserving property, we have
    \begin{equation*}
        \lim_{j \to \infty} \frac{\geodistance(\eta, \eta_j)}{\norm{\eta - \eta_j}_{L^2}} = \lim_{j \to \infty} \frac{\geodistance(e, \xi_j)}{\norm{\xi_j - e}_{L^2}} = 1.
    \end{equation*}
\end{proof}

\subsection{Second variations of the exponential map}
The proof of Theorem~\ref{Whitney extension of squared distance} uses the operator estimate for the first derivative and the symmetry of the ordinary second derivative in canonical lift coordinates. We now record the equation obtained by differentiating the Jacobi equation once more. This equation is not needed in the proof above, but it describes the covariant second derivative of the exponential map and suggests an alternative route to the operator estimate in Proposition~\ref{operator continuity of differential}. Since the $L^2$ metric is weak, the calculation below is understood along variations of sufficient regularity that all the displayed covariant derivatives of the curvature tensor are defined.

Let $u_0 \in \normalball{\varepsilon}$ with $v_0,w_0\in T_e\Dmu^s$ and consider the two-parameter family of geodesics
\begin{equation}\label{two parameter geodesic variation}
    \Gamma(t,r,s)=\riemexp\big(t(u_0+rv_0+sw_0)\big).
\end{equation}
Here $0 \leq t \leq 1$ and $r, s$ are sufficiently small so that $u_0 + rv_0 + sw_0 \in \normalball{\varepsilon}$. We write
\begin{equation*}
    T=\partial_t\Gamma,\qquad V=\partial_r\Gamma,\qquad W=\partial_s\Gamma
\end{equation*}
for the corresponding vector fields along the variation, and set
\begin{equation*}
    \geod(t)=\Gamma(t,0,0)=\riemexp tu_0.
\end{equation*}
Here, for example, $\partial_r\Gamma=d\Gamma(\partial_r)$ is an intrinsic tangent vector to $\Dmu^s$. No lift of the exponential map is required. Along $\geod$ the two first variation fields are
\begin{equation}\label{first variation fields second variation}
    V(t)=d\riemexp(tu_0)(tv_0),\qquad W(t)=d\riemexp(tu_0)(tw_0).
\end{equation}
Equivalently, in the notation of \eqref{left-translated Jacobi},
\begin{equation*}
    V(t)=D\geod(t)y(t,v_0),\qquad W(t)=D\geod(t)y(t,w_0).
\end{equation*}
Both fields satisfy the Jacobi equation
\begin{equation}\label{Jacobi equations first variations}
    \frac{D^2V}{dt^2}+R(V,\dot\geod)\dot\geod=0, \qquad \frac{D^2W}{dt^2}+R(W,\dot\geod)\dot\geod=0,
\end{equation}
with initial conditions
\begin{equation}\label{first variation initial conditions}
    V(0)=W(0)=0, \qquad \frac{DV}{dt}(0)=v_0, \qquad \frac{DW}{dt}(0)=w_0.
\end{equation}

We define the covariant second variation along $\geod$ by
\begin{equation}\label{covariant second variation}
    Z(t)=\biggl.\frac{D}{ds}\biggr|_{s=0}V(t,0,s) =\bigl.\nabla_WV\bigr|_{(t,0,0)}.
\end{equation}
Since the domain of $\riemexp$ is equipped with its flat connection, we have
\begin{equation}\label{covariant Hessian exponential}
    Z(t)=(\nabla d\riemexp)_{tu_0}(tw_0,tv_0).
\end{equation}
In particular,
\begin{equation*}
    Z(1)=(\nabla d\riemexp)_{u_0}(w_0,v_0).
\end{equation*}
This is the intrinsic meaning of the second derivative of the manifold-valued map $\riemexp$. We use the curvature convention
\begin{equation}\label{curvature convention}
    R(X,Y)Z=\nabla_X\nabla_YZ-\nabla_Y\nabla_XZ-\nabla_{[X,Y]}Z.
\end{equation}

\begin{proposition}\label{second variation equation}
The covariant second variation $Z$ satisfies
\begin{equation}\label{inhomogeneous Jacobi equation}
    \frac{D^2Z}{dt^2}+R(Z,\dot\geod)\dot\geod+2R(W,\dot\geod)\frac{DV}{dt} +2R(V,\dot\geod)\frac{DW}{dt} +(\nabla_{\dot\geod}R)(W,\dot\geod)V +(\nabla_WR)(V,\dot\geod)\dot\geod=0,
\end{equation}
with initial conditions
\begin{equation}\label{second variation initial conditions}
    Z(0)=0,\qquad \frac{DZ}{dt}(0)=0.
\end{equation}
\end{proposition}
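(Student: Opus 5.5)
The plan is to differentiate the Jacobi equation for $V$ covariantly in the $s$-direction along the two-parameter family \eqref{two parameter geodesic variation}, and to commute $\frac{D}{ds}$ past the two $\frac{D}{dt}$ derivatives using the curvature tensor. Since $\Gamma$ is a map from a parameter domain, the coordinate fields $\partial_t,\partial_r,\partial_s$ commute. Torsion-freeness of the Levi-Civita connection then gives $\frac{D}{ds}T=\frac{D}{dt}W$ and $\frac{D}{ds}V=\frac{D}{dr}W$. With the convention \eqref{curvature convention}, commuting covariant derivatives along $\Gamma$ gives, for any vector field $X$ along $\Gamma$,
\begin{equation*}
\frac{D}{ds}\frac{D}{dt}X=\frac{D}{dt}\frac{D}{ds}X+R(W,T)X.
\end{equation*}
For each fixed $s$ the curve $t\mapsto\Gamma(t,0,s)$ is a geodesic and $V(\cdot,0,s)$ is a Jacobi field along it. So the identity
\begin{equation*}
\frac{D^2V}{dt^2}+R(V,T)T=0
\end{equation*}
holds on the whole slice $r=0$, and we may apply $\frac{D}{ds}$ to it at $s=0$.

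For the first term, apply the commutation rule twice:
\begin{equation*}
\frac{D}{ds}\frac{D}{dt}\frac{DV}{dt}=\frac{D}{dt}\frac{D}{ds}\frac{DV}{dt}+R(W,T)\frac{DV}{dt}=\frac{D^2Z}{dt^2}+\frac{D}{dt}\bigl(R(W,T)V\bigr)+R(W,T)\frac{DV}{dt}.
\end{equation*}
Expanding $\frac{D}{dt}(R(W,T)V)$ by the Leibniz rule gives
\begin{equation*}
(\nabla_T R)(W,T)V+R\bigl(\tfrac{DW}{dt},T\bigr)V+R(W,\tfrac{DT}{dt})V+R(W,T)\tfrac{DV}{dt}.
\end{equation*}
Here $\frac{DT}{dt}=0$ by the geodesic equation. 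For the curvature term, Leibniz gives
\begin{equation*}
\frac{D}{ds}\bigl(R(V,T)T\bigr)=(\nabla_W R)(V,T)T+R(Z,T)T+R\bigl(V,\tfrac{DW}{dt}\bigr)T+R(V,T)\tfrac{DW}{dt}.
\end{equation*}

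The main bookkeeping step is converting the terms $R(\frac{DW}{dt},T)V$ and $R(V,\frac{DW}{dt})T$ into $R(V,T)\frac{DW}{dt}$, so that the two copies of each mixed term combine. For this I use the first Bianchi identity together with the antisymmetries of $R$ and the symmetry $\langle R(X,Y)Z,U\rangle=\langle R(Z,U)X,Y\rangle$. The quickest check is to pair with an arbitrary test field and verify
\begin{equation*}
R\bigl(\tfrac{DW}{dt},T\bigr)V+R\bigl(V,\tfrac{DW}{dt}\bigr)T=R(V,T)\tfrac{DW}{dt}
\end{equation*}
directly from the Bianchi identity $R(X,Y)Z+R(Y,Z)X+R(Z,X)Y=0$, taking $X=V$, $Y=T$, $Z=\frac{DW}{dt}$. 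Collecting terms then gives two copies each of $R(W,T)\frac{DV}{dt}$ and $R(V,T)\frac{DW}{dt}$, which yields \eqref{inhomogeneous Jacobi equation}. Getting this combinatorics right, and ensuring the paper's sign convention is respected throughout, is the expected obstacle. The symmetry in the roles of $V,W$ of the final coefficients serves as a consistency check.

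For the initial conditions, at $t=0$ we have $\Gamma(0,r,s)=e$ for all $r,s$. Hence $V(0,0,s)=0$ identically in $s$, and so $Z(0)=\frac{D}{ds}V(0,0,s)=0$. Next, $\frac{DZ}{dt}=\frac{D}{dt}\frac{D}{ds}V$, which equals $\frac{D}{ds}\frac{DV}{dt}-R(W,T)V$. At $t=0$ the curvature term vanishes since $V=W=0$ there. Also $\frac{DV}{dt}(0,0,s)=v_0$ for all $s$, since it is the initial velocity of $r\mapsto t(u_0+rv_0+sw_0)$ at the fixed point $e$. Its covariant $s$-derivative along the constant curve $s\mapsto e$ is therefore the ordinary derivative of a constant vector in $T_e\Dmu^s$, namely zero. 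This gives $\frac{DZ}{dt}(0)=0$.

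All manipulations are understood under the stated regularity assumptions, which make the curvature derivatives defined.
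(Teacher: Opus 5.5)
Your proposal is correct and follows the paper's proof essentially line for line. You differentiate the Jacobi equation for $V$ covariantly in the $W$ direction, commute past the two $\nabla_T$'s to get $\nabla_T\nabla_T Z+2R(W,T)\nabla_T V+(\nabla_T R)(W,T)V+R(\nabla_T W,T)V$, expand the curvature term by Leibniz, and merge the mixed terms with the first Bianchi identity $R(\nabla_T W,T)V+R(V,\nabla_T W)T=R(V,T)\nabla_T W$. The pair symmetry of $R$ and the test-field pairing you mention are not needed, since Bianchi with antisymmetry suffices directly, and your derivation of $\frac{DZ}{dt}(0)=0$ via the commutation formula spells out a step the paper leaves implicit.
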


\begin{proof}
The coordinate vector fields of the variation commute, and $t\mapsto\Gamma(t,r,s)$ is a geodesic for every fixed $r$ and $s$. Hence
\begin{equation*}
    \nabla_TT=0,\qquad \nabla_WT=\nabla_TW, \qquad \nabla_VT=\nabla_TV.
\end{equation*}
For fixed $s$, the field $V$ satisfies
\begin{equation}\label{Jacobi equation V}
    \nabla_T\nabla_TV+R(V,T)T=0.
\end{equation}
We differentiate \eqref{Jacobi equation V} covariantly in the $W$ direction. Commuting the covariant derivatives gives
\begin{equation}\label{differentiated Jacobi kinetic term}
\nabla_W\nabla_T\nabla_TV =\nabla_T\nabla_TZ +2R(W,T)\nabla_TV +(\nabla_TR)(W,T)V +R(\nabla_TW,T)V.
\end{equation}
Differentiating the curvature term gives
\begin{equation}\label{differentiated Jacobi curvature term}
    \nabla_W\big(R(V,T)T\big) = (\nabla_WR)(V,T)T+R(Z,T)T + R(V,\nabla_TW)T+R(V,T)\nabla_TW.
\end{equation}
The first Bianchi identity gives
\begin{equation*}
    R(\nabla_TW,T)V+R(V,\nabla_TW)T = R(V,T)\nabla_TW.
\end{equation*}
Substitution of \eqref{differentiated Jacobi kinetic term} and \eqref{differentiated Jacobi curvature term} into the covariant derivative of \eqref{Jacobi equation V}, followed by restriction to $r=s=0$, proves \eqref{inhomogeneous Jacobi equation}.

Since $\Gamma(0,r,s)=e$, we have $V(0,r,s)=0$. Further,
\begin{equation*}
    \frac{DV}{dt}\bigg|_{t=0}=v_0
\end{equation*}
for every $r$ and $s$. Covariantly differentiating these two identities in the $s$ direction gives \eqref{second variation initial conditions}.
\end{proof}

Equation \eqref{inhomogeneous Jacobi equation} is a form of the second-order geodesic deviation equation, cf. Ba{\.z}a{\'n}ski \cite{bazanski1977kinematics}.

\enlargethispage{3\baselineskip}

\begin{remark}
The second variation equation suggests that Proposition~\ref{operator continuity of differential} might alternatively be obtained from suitable mixed $H^{s'}$-$L^2$ estimates for the covariant Hessian of $\riemexp$, together with estimates for the term relating the covariant and ordinary second derivatives.
\end{remark}

\begin{remark}
At a conjugate vector where $d\riemexp$ is Fredholm, the covariant Hessian restricted to the kernel and projected to the cokernel contains information about the singularity type of the exponential map, cf. \cite{warner1965conjugate}. Equation \eqref{inhomogeneous Jacobi equation} may therefore be useful in the study of degenerate conjugate points.
\end{remark}

\bibliographystyle{amsplain}
\bibliography{bibliography.bib}

\enlargethispage{4\baselineskip}

\vfill

\end{document}